\newcommand{\intfrac}[2]{\genfrac{\lfloor}{\rfloor}{}{}{#1}{#2}}
\newcommand{\sawtooth}[1]{\left\langle #1\right\rangle}
\newcommand{\bp}{\begin{pmatrix}}
\newcommand{\ep}{\end{pmatrix}}
\newcommand{\be}{\begin{equation}}
\newcommand{\ee}{\end{equation}}
\numberwithin{equation}{section}
\theoremstyle{plain}
\newtheorem{theorem}[equation]{Theorem}
\newtheorem{lemma}[equation]{Lemma}
\newtheorem{proposition}[equation]{Proposition}
\newtheorem{corollary}[equation]{Corollary}
\theoremstyle{definition}
\newtheorem{example}[equation]{Example}
\newtheorem{remark}[equation]{Remark}
\newtheorem{definition}[equation]{Definition}
\newtheorem{ex}[equation]{Example}
\numberwithin{equation}{section}
\def\Z{\mathbb Z}
\def\Q{\mathbb Q}
\def\C{\mathbb C}
\def\p{\partial}
\def\sm{\setminus}
\def\spinc{Spin$^c$}
\newcommand{\hide}[1]{}
\DeclareMathOperator\coker{coker}
\DeclareMathOperator\Sp{Sp}
\DeclareMathOperator\Hom{Hom}
\DeclareMathOperator\codim{codim}
\def\Z{\mathbb Z}
\def\Q{\mathbb Q}
\def\C{\mathbb C}
\def\p{\partial}
\def\sm{\setminus}
\def\spinc{Spin$^c$}
\newcommand{\sss}{\ifmmode{{\mathfrak s}}\else{${\mathfrak s}$\ }\fi}
\newcommand{\sst}{\ifmmode{{\mathfrak t}}\else{${\mathfrak t}$\ }\fi}
\DeclareMathOperator\ord{ord}
\DeclareMathOperator\lk{lk}
\def\necklaceshift{1.5}
\def\necklaceradius{1}
\def\labelshift{1.2}
\def\crosslinkstart{0.8}
\def\crosslinkend{1.2}
\def\secondlabelshift{1.8}
\def\necklacetemplate#1{\psset{linewidth=1.5pt}\rput(0,\labelshift){\ensuremath{#1}}}
\def\necklaceright#1{\necklacetemplate{#1}\psarc(0,0){\necklaceradius}{350}{290}}
\def\necklaceleft#1{\necklacetemplate{#1}\psarc(0,0){\necklaceradius}{180}{120}}
\def\crosslink#1{\rput{315}(0,0){\psline[linewidth=1pt](\crosslinkstart,0)(\crosslinkend,0)\rput(\secondlabelshift,0){\rotatebox{45}{#1}}}}
\def\othercrosslink#1{\rput{225}(0,0){\psline[linewidth=1pt](\crosslinkstart,0)(\crosslinkend,0)\rput(\secondlabelshift,0){\rotatebox{135}{#1}}}}
\def\cnecklaceleft#1#2{\necklacetemplate{#1}\psarc(0,0){\necklaceradius}{180}{120}\crosslink{#2}}
\def\cnecklaceright#1#2{\necklaceright{#1}\othercrosslink{#2}}
\def\necklacemiddle#1{\necklacetemplate{#1}\psarc(0,0){\necklaceradius}{180}{290}\psarc(0,0){\necklaceradius}{350}{120}}
\def\templatedoublenecklace#1#2#3#4{\psset{linecolor=#3}\rput(0,0){\necklaceright{#1}}\psset{linecolor=#4}\rput(\necklaceshift,0){\necklaceleft{#2}}}
\def\templatetriplenecklace#1#2#3#4#5#6{\psset{linecolor=#4}\rput(-\necklaceshift,0){\necklaceright{#1}}\psset{linecolor=#5}\rput(0,0){\necklacemiddle{#2}}%
\psset{linecolor=#6}\rput(\necklaceshift,0){\necklaceleft{#3}}}
\def\blackdoublenecklace#1#2{\templatedoublenecklace{#1}{#2}{black}{black}}
\def\colortriplenecklace#1#2#3{\templatetriplenecklace{#1}{#2}{#3}{blue}{green}{orange}}
\def\contractmiddlenecklace#1#2{\templatedoublenecklace{#1}{#2}{blue}{orange}}
\def\contractleftnecklace#1#2{\templatedoublenecklace{#1}{#2}{blue}{green}}
\def\templatedoublecnecklace#1#2#3#4#5#6{\psset{linecolor=#3}\rput(0,0){\cnecklaceright{#1}{#5}}\psset{linecolor=#4}\rput(\necklaceshift,0){\cnecklaceleft{#2}{#6}}}
\def\templatetriplecnecklace#1#2#3#4#5#6#7#8{\psset{linecolor=#4}\rput(-\necklaceshift,0){\cnecklaceright{#1}{#7}}%
\psset{linecolor=#5}\rput(0,0){\necklacemiddle{#2}}%
\psset{linecolor=#6}\rput(\necklaceshift,0){\cnecklaceleft{#3}{#8}}}
\def\blackdoublecnecklace#1#2#3#4{\templatedoublecnecklace{#1}{#2}{black}{black}{#3}{#4}}
\def\colortriplecnecklace#1#2#3#4#5{\templatetriplecnecklace{#1}{#2}{#3}{blue}{green}{orange}{#4}{#5}}
\def\contractmiddlecnecklace#1#2#3#4{\templatedoublecnecklace{#1}{#2}{blue}{orange}{#3}{#4}}
\def\contractleftcnecklace#1#2#3#4{\templatedoublecnecklace{#1}{#2}{blue}{green}{#3}{#4}}
\def\nodedot#1{\pscircle[linecolor=black,fillcolor=black,fillstyle=solid]#1{0.08}}
\title[Rational cuspidal curves in Hirzebruch surfaces]{Topological obstructions for rational cuspidal curves in Hirzebruch surfaces}
\author{Maciej Borodzik}
\address{Institute of Mathematics, University of Warsaw, ul. Banacha 2,
02-097 Warsaw, Poland}
\email{mcboro@mimuw.edu.pl}
\author{Torgunn Karoline Moe}
\address{Department of Mathematics, University of Oslo, Oslo, Norway}
\email{t.k.moe@math.uio.no}
\date{\today}
\subjclass[2010]{primary: 14H45, secondary: 14H20, 57M25, 14J25} 
\keywords{Hirzebruch surface, rational cuspidal curves, spectrum, semigroup, d-invariant}
\begin{document}
\begin{abstract}
We study rational cuspidal curves in Hirzebruch surfaces. We provide two obstructions for the existence of rational cuspidal curves
in Hirzebruch surfaces with prescribed types of singular points. The first result comes from Heegaard--Floer theory and is a generalization of a result
by Livingston and the first author. The second criterion is obtained by comparing the spectrum of a suitably defined link at infinity of a curve with
spectra of its singular points.
\end{abstract}
\maketitle

\section{Introduction}
Let $C$ be a reduced and irreducible algebraic curve in a smooth complex surface $X$. A singular point $p$ on $C$ is called a \emph{cusp} if it is locally irreducible. The curve is called \emph{cuspidal} if all its singularities are cusps.

Cuspidal curves in the projective plane have been investigated in classical algebraic geometry and have been subject of intense study the past three decades. The renewed interest in these curves in the eighties came after results by Lin and Zaidenberg in \cite{LinZaidenberg}, and Matsuoka and Sakai in \cite{MatsuokaSakai}. Moreover, two questions about plane cuspidal curves were asked by Sakai in 1994 (see \cite{Sakai}), and ever since, several attempts have been made to describe and classify rational cuspidal curves in the projective plane (see \cite{FLMN04, Borodzik, Fent, FlZa95, FlZa97, Liu, PALKA, Piontkowski, Tono05, Wak}).

In \cite{MOEPHD} the second author turned the attention to cuspidal curves in Hirzebruch surfaces and found that many of the results for plane cuspidal curves could be extended to curves in Hirzebruch surfaces (see \cite{MOEONC, MOECCH}). Indeed, this does not come as a surprise, since the Hirzebruch surfaces are linked to each other and the projective plane by birational transformations, and such transformations clearly transform rational curves to rational curves. However, the picture is somewhat more complicated; in general, a cuspidal curve might acquire some multibranched singular points under a birational transformation. Therefore, there is no direct
correspondence between rational cuspidal curves in $\mathbb{C}P^2$ and rational cuspidal curves in Hirzebruch surfaces.

In the present article we continue this work and extend two results from the plane case to the case of cuspidal curves in Hirzebruch surfaces. The first result, given in Theorem~\ref{THF}, is a consequence of Heegaard--Floer theory, and it is a generalization of the result by Livingston and the first author in \cite{BorodzikHF}.  We refer to Section~\ref{NOT}
for explaining notation used in the theorem and especially to Section~\ref{sec:semigroups} for the definition of the function $R$.

\begin{theorem}\label{THF}
Let $C$ be a rational cuspidal curve of type $(a,b)$ in a Hirzebruch surface $X_e$ with $e \geq 0$. Let $g=(a-1)(b-1)+\frac12b(b-1)e$.
Then for any $m\in[-g,g]$ and for any presentation $m+g=s_1b+s_2(a+be)+1$, where $s_1$ and $s_2$ are integers, we have
\begin{equation}\label{eq:THF}
R(m+g)\geq P(s_1,s_2),
\end{equation}
where $R$ is the counting function for the semigroups of the singular points of $C$ and
\[P(s_1,s_2)=(s_1+1)(s_2+1)+\frac12s_2(s_2+1)e.\]
\end{theorem}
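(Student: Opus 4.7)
The plan is to compute the Heegaard--Floer correction terms of the 3--manifold $Y$ obtained as the boundary of a regular neighborhood $N$ of $C$ in $X_e$ in two different ways---once using the complement $W=X_e\smallsetminus\Int N$, and once using an explicit surgery description---and then compare them. This directly generalizes the strategy of \cite{BorodzikHF}; the novelty in the Hirzebruch setting is that $W$ has rank--one second cohomology (instead of zero in the $\C P^2$ case), and this extra freedom is what produces the two--parameter family of inequalities indexed by $(s_1,s_2)$.

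\textbf{Step 1: surgery presentation of $Y$.} Since $C$ is a topological sphere with cuspidal singularities $p_1,\dots,p_n$ whose links are iterated torus knots $L_i\subset S^3$, a regular neighborhood $N$ is obtained from the disk bundle over $S^2$ of Euler number $C^2=b^2e+2ab$ by attaching cones on the $L_i$. Hence $Y=\partial N$ is diffeomorphic to the surgery $S^3_{C^2}(L)$ on the connect sum $L=L_1\#\cdots\#L_n$, which is itself an L--space knot.

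\textbf{Step 2: two formulas for $d(Y,\mathfrak{s}_m)$.} On the \emph{surgery side}, the Borodzik--Livingston formula (recalled in Section~\ref{sec:semigroups}) writes the correction term as
\[
d(Y,\mathfrak{s}_m)=D_0(m)-2R(m+g),
\]
where $m\in[-g,g]$ labels a \spinc\ structure on $Y$ and $D_0(m)$ is a universal function of $m$ and $p:=C^2$. On the \emph{four--manifold side}, Ozsv\'ath--Szab\'o's $d$--invariant inequality applied to $W$ produces an upper bound
\[
d(Y,\mathfrak{s}_m)\ \le\ \tfrac14\bigl(c_1(\mathfrak{t})^2-2\chi(W)-3\sigma(W)\bigr)
\]
for any \spinc\ extension $\mathfrak{t}$ of $\mathfrak{s}_m$ to $W$. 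Such extensions form a torsor; via the cohomology exact sequence of $(X_e,N)$ it is parametrized by pairs $(s_1,s_2)\in\Z^2$ labelling the basis $\{F,H_\infty\}$ of $H^2(X_e;\Z)$, and the constraint $m+g=s_1b+s_2(a+be)+1$ is exactly the condition that the pairing of $c_1(\mathfrak{t})$ with $[C]=aF+bH_\infty$ match the restriction of $\mathfrak{t}$ to $Y$.

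\textbf{Step 3: comparison and the main obstacle.} Combining the two formulas of Step~2 yields $2R(m+g)\ge D_0(m)-\tfrac14\bigl(c_1(\mathfrak{t})^2-2\chi(W)-3\sigma(W)\bigr)$, and the theorem reduces to showing that the right--hand side equals $2P(s_1,s_2)$. This matching identity is the principal computational obstacle: it requires evaluating $c_1(\mathfrak{t})^2$ using the intersection form $\bigl(\begin{smallmatrix}0&1\\1&e\end{smallmatrix}\bigr)$ of $X_e$ in the basis $\{F,H_\infty\}$, together with $\chi(X_e)=4$ and $\sigma(X_e)=0$, and then simplifying. The appearance of the term $\tfrac12 s_2(s_2+1)e$ in $P(s_1,s_2)$ mirrors the identity $h^0(X_e,\mathcal{O}(s_1F+s_2H_\infty))=(s_1+1)(s_2+1)+\tfrac12 s_2(s_2+1)e$, so the inequality has a clean algebro--geometric interpretation as a Hilbert--function obstruction on $C$ detected by Heegaard--Floer homology.
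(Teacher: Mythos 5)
Your outline follows the paper's proof essentially step for step: realize $Y$ as the $C^2$--surgery on the connected sum of the links of the cusps, compute $d(Y,\sss_m)$ from the semigroup counting function $R$ via the Borodzik--Livingston formula, bound it by the Ozsv\'ath--Szab\'o inequality applied to $W=X_e\setminus N$, and parametrize the extendable \spinc{} structures by characteristic classes of $X_e$ indexed by $(s_1,s_2)$. So the strategy is the right one, but three load-bearing steps are missing or mis-stated as written.

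First, the Ozsv\'ath--Szab\'o inequality you invoke requires $W$ to be \emph{negative definite}; ``rank-one second cohomology'' is not sufficient and you never verify definiteness. The paper does this by showing $H_2(W)\cong\Z$ is generated by the class $H=(a'+b'e)L-b'M$ orthogonal to $[C]$, with $H^2=-d/c^2<0$; the same computation is what produces the value $c_1^2(\sst|_W)=-(r_1b-r_2a)^2/d$ that you need. Second, in Step 3 you propose to evaluate $c_1(\sst)^2-2\chi-3\sigma$ using the intersection form, Euler characteristic and signature of $X_e$. The inequality is applied to $W$, so the relevant data are $c_1(\sst|_W)^2$ (computed in the one-dimensional $H^2(W;\Q)$, where $L^2=-b^2/d$, $M^2=-a^2/d$, $L\cdot M=-ab/d$), together with $\chi(W)$ and $\sigma(W)=-1$. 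One \emph{can} work with the global invariants of $X_e$, but only after subtracting the contribution of $N$ (namely $c_1^2(\sst|_N)=k^2/d$ with $k=\langle c_1(\sst),[C]\rangle$, $\chi(N)=2$, $\sigma(N)=+1$) by Novikov additivity; your recipe omits the $N$--side entirely and would therefore produce the wrong constant. Third, with your orientation convention $Y=\partial N$ one has $\partial W=-Y$, so the inequality bounds $d(-Y,\sss)=-d(Y,\sss)$ from below, i.e.\ $d(Y,\sss_m)\le-\tfrac14\bigl(c_1^2-2\chi(W)-3\sigma(W)\bigr)$; the sign on the right-hand side of your Step 2 display is off, and since the whole argument is a comparison of two signed quantities this cannot be left to sort itself out (the paper avoids the issue by defining $Y$ as $\partial N$ with reversed orientation from the outset). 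None of these repairs requires a new idea, but each is needed before the ``matching identity'' you defer can even be set up correctly.
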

Notice that if $m+g-1$ is not divisible by $\gcd(a,b)$, then Theorem~\ref{THF} does not provide a direct restriction on the value of $R(m+g)$.

In Section~\ref{ss:bezout} we show an alternative, algebraic proof of inequality \eqref{eq:THF}, following the ideas of \cite{FLMN04}. 
As a matter of fact, Theorem~\ref{thm:generalsurface} gives a lower bound for a function $R$ for any rational cuspidal curve in any algebraic surface.
It is natural to conjecture that the $d$--invariants estimate used in the proof of Theorem~\ref{THF} will give the same bound.

We also remark, that if $m+g\not\in[-g,g]$, the value of $R(m+g)$ is fixed: it is $0$ if $m+g<0$ and $m$ if $m+g>2g$.

Our second result is about the semicontinuity property of the spectrum. It puts restrictions on the spectrum of singular points of a rational cuspidal curve
in $X_e$.

\begin{theorem}\label{thm:spectrum}
Let $C$ be a rational cuspidal curve of type $(a,b)$ in $X_e$. Let $Sp_1,\ldots,Sp_n$ be the spectra of its singular points. Let $Sp^\infty_{a,b}$ be
the spectrum at infinity given in Table~\ref{table:one}. Then for any $x\in(0,1)$ such that $x\not\in Sp^\infty_{a,b}$ we have
\begin{equation}\label{eq:semic}
\begin{split}
\sum_{j=1}^n\#Sp_j\cap(x,x+1)&\le \#Sp^\infty_{a,b}\cap (x,x+1)\\
\sum_{j=1}^n\#Sp_j\setminus(x,x+1)&\le \#Sp^\infty_{a,b}\setminus (x,x+1).
\end{split}
\end{equation}
\end{theorem}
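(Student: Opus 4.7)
The plan is to prove Theorem~\ref{thm:spectrum} by invoking the Steenbrink--Varchenko semicontinuity of the singularity spectrum under degeneration; the two inequalities in \eqref{eq:semic} are exactly the ``semicontinuous in both directions'' form of this theorem, applied to an open interval of length one whose endpoints avoid spectrum values.

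First, I would set up a global one-parameter deformation of $C$. View $C$ as a member of the linear system of curves of type $(a,b)$ on $X_e$, and choose a generic pencil $\{C_t\}_{t\in \mathbb{D}}$ with $C_0=C$ and $C_t$ smooth for $t\neq 0$; such smooth members exist for admissible $(a,b)$. The local spectra $Sp_1,\dots,Sp_n$ are analytic invariants of the cusps $(C,p_j)$, while the geometric content of the degeneration is captured by the mixed Hodge structure on the cohomology of the Milnor fibre of the total family. The virtue of working in a Hirzebruch surface is that $X_e$ is toric, so one can take the deformation compatible with the toric structure and keep the behaviour near the divisor at infinity $D_\infty = E_\infty \cup F_0$ fixed.

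Next, I would identify $Sp^\infty_{a,b}$ as the ``spectrum of the generic fibre at infinity''. Intersecting $C_t$ with a tubular neighbourhood of $D_\infty$ gives a link at infinity whose monodromy and weight filtration can be computed directly from the Newton polytope of a generic Laurent polynomial of bidegree $(a,b)$ on $X_e$; the resulting set of spectrum numbers is the table entry. By the additivity of the spectrum in the sense of Steenbrink, the global spectrum of the generic smooth fibre decomposes as the part ``at infinity'' and the (empty, since smooth) part coming from interior singularities, so $Sp^\infty_{a,b}$ is the relevant comparison object. For $C_0 = C$, the global spectrum decomposes as $\sum_j Sp_j$ plus $Sp^\infty_{a,b}$ (the link at infinity being unchanged by the deformation). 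Applying Steenbrink's semicontinuity theorem to the interval $(x,x+1)$, under the hypothesis $x \not\in Sp^\infty_{a,b}$ which guarantees that endpoint contributions vanish, yields the first inequality of \eqref{eq:semic}; applying it to the complementary family of intervals yields the second.

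The main obstacle I foresee is the rigorous identification of the local/global spectrum decomposition in the non-affine setting of a Hirzebruch surface. Varchenko's and Steenbrink's statements are classically formulated either for function germs or for Newton-nondegenerate polynomials on $\mathbb{C}^n$. To transplant them to a curve in $X_e$ one has two routes: either realize $X_e$ toric-affinely via two coordinate charts and assemble the spectra with a Mayer--Vietoris-type argument, or reduce to the plane case by the birational equivalence $X_e \dashrightarrow \mathbb{C}P^2$ used elsewhere in the paper and track the spectrum at infinity through the sequence of blow-ups and blow-downs. Either route requires care, but the end result is the two-sided semicontinuity stated in the theorem. A secondary, purely combinatorial task is to verify that the table defining $Sp^\infty_{a,b}$ coincides with the spectrum of the generic curve of type $(a,b)$ at $D_\infty$, which amounts to a direct computation of Hodge numbers from the toric Newton polytope.
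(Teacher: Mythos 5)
Your route is genuinely different from the paper's, which is knot-theoretic rather than Hodge-theoretic: the paper places $C'=C\cap B$ (with $B=X_e\setminus N$ a standard $4$--ball) as a singular surface in the four-ball with boundary the link at infinity $L_{a,b}$, invokes the two-sided Murasugi-type inequality for Tristram--Levine signatures \cite[Proposition 2.5.5]{BN-spec}, and then translates signatures into spectrum counts via \cite[Corollary 2.4.6]{BN-spec} together with the Hermitian Variation Structure computation of Section~\ref{S}. Your Steenbrink--Varchenko proposal is in the spirit of the classical plane-curve arguments, but as written it has two gaps that are not merely ``routes requiring care''.

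First, the direction of the deformation is wrong for the inequality you want. Semicontinuity bounds the sum of the spectra of the singularities of the \emph{generic} fibre by the spectrum of the \emph{special} fibre. If you deform $C=C_0$ inside its linear system to a smooth $C_t$, the generic member has no singularities and the local theorem says nothing. To obtain $\sum_j\#Sp_j\cap(x,x+1)\le\#Sp^\infty_{a,b}\cap(x,x+1)$ you need the cusps of $C$ to arise as a deformation of an object whose spectrum is $Sp^\infty_{a,b}$, i.e.\ a semicontinuity theorem for the spectrum at infinity of a global fibration (in the style of N\'emethi--Sabbah), not the germ statement of Steenbrink--Varchenko. Your asserted decomposition ``global spectrum of $C_0$ equals $\sum_j Sp_j$ plus $Sp^\infty_{a,b}$'' is precisely the hard point and is not a formal consequence of additivity; it is what the paper's signature inequality replaces.

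Second, the complement inequality does not follow from your argument. Since $\#Sp_j\setminus(x,x+1)=\mu_j-\#Sp_j\cap(x,x+1)$ and $\#Sp^\infty_{a,b}\setminus(x,x+1)=\deg\Delta_{L_{a,b}}-\#Sp^\infty_{a,b}\cap(x,x+1)$, the second line of \eqref{eq:semic} is a \emph{lower} bound on $\sum_j\#Sp_j\cap(x,x+1)$ controlled by the defect $\deg\Delta_{L_{a,b}}-\sum_j\mu_j$. The complement of an open unit interval is not a semicontinuity domain, so ``applying the theorem to the complementary family of intervals'' is not an application of the standard statement. In the paper this second bound comes from the opposite sign of the two-sided signature inequality; your approach needs a separate argument here. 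Finally, the identification of Table~\ref{table:one} with the spectrum at infinity is itself a substantial computation (the paper spends Sections~\ref{Spectra}--\ref{S} on it, via splice diagrams, Neumann's equivariant signatures and Hodge numbers); you correctly flag this but defer it.
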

\begin{table}
\begin{tabular}{|l||l|l|}\hline
&$\vphantom{\Bigl\Vert}$\textbf{$x$ is of form }&\textbf{multiplicity of $x$ in $Sp^\infty_{a,b}$}\\\hline
1.&$\vphantom{\Bigl\Vert}$ $x=1$ & $a+b-1$\\\hline
2.&$\vphantom{\Bigl\Vert}$ $x=\frac{p}{w}$ and $x=\frac{q}{b}$ for some $p$ and $q$ & $\intfrac{pb}{w}+\intfrac{qa}{b}-1$\\\hline
3.&$\vphantom{\Bigl\Vert}$ $x=1+\frac{p}{w}$ and $x=1+\frac{q}{b}$ for some $p$ and $q$ & $a+b-1-\intfrac{pb}{w}-\intfrac{qa}{b}$\\\hline
4.&$\vphantom{\Bigl\Vert}$ $x=\frac{p}{w}$ for some $p$ but $x\neq \frac{q}{b}$ for any $q$ & $\intfrac{pb}{w}$\\\hline
5.&$\vphantom{\Bigl\Vert}$ $x=1+\frac{p}{w}$ for some $p$ but $x\neq 1+\frac{q}{b}$ for any $q$ & $b-1-\intfrac{pb}{w}$\\\hline
6.&$\vphantom{\Bigl\Vert}$ $x=\frac{q}{b}$ for some $q$ but $x\neq \frac{p}{w}$ for any $p$ & $\intfrac{qa}{b}$\\\hline
7.&$\vphantom{\Bigl\Vert}$ $x=1+\frac{q}{b}$ for some $q$ but $x\neq 1+\frac{p}{w}$ for any $p$ & $a-1-\intfrac{qa}{b}$\\\hline
8.&$\vphantom{\Bigl\Vert}$ For all other $x$  & 0 \\ \hline
\end{tabular}
\medskip
\caption{Spectrum at infinity of a type $(a,b)$ curve. Here $w=a+be$, and $p,q$ are assumed to be integers, $1\le p\le w-1$, $1\le q\le b-1$. 
Number $x$ is in the interval $[0,2]$.}\label{table:one}
\end{table}

The two above results, Theorem~\ref{THF} and Theorem~\ref{thm:spectrum}, give two restrictions for possible configurations of singular points
on a cuspidal curve. As we show in Section~\ref{Examples}, the two results differ in natures. Indeed, for unicuspidal curves, the semigroup distribution
property obstructs cases where the multiplicities are large (close to $b$), while the spectrum semicontinuity is effective in obstructing curves with low 
multiplicities.
\subsection{Structure}
In this article we first set up the notation in Section~\ref{NOT}. In Section~\ref{HF} we use Heegaard--Floer theory to establish Theorem~\ref{THF}. In Section~\ref{Spectra} we study the link at infinity of curves in Hirzebruch surfaces. In Section~\ref{S} we show how the spectrum of the link at infinity can be computed, and the result is as shown in Table~\ref{table:one}. Our main result in this section, giving new restrictions for cuspidal curves, is Theorem~\ref{thm:spectrum} that compares the spectrum of singular points of the curve to the spectrum of the link at infinity.
Finally, in Section~\ref{Examples} we give some examples of possible applications.

\section{Generalities}\label{NOT}
\subsection{Hirzebruch surfaces}
Let $X_e$, $e\geq 0$, be a Hirzebruch surface, regarded as a projectivisation of a rank 2 bundle $\mathcal{O}\oplus\mathcal{O}(-e)$ over $\C P^1$. Let $L$
be a fibre and $M_0$ the special section, so that the intersections and self--intersections are as follows:
\[L^2=0,\ M_0^2=-e,\ L\cdot M_0=1.\] 
We define $M=eL+M_0$. Then $L$ and $M$ generate
$H_2(X_e;\Z)$ and the intersection matrix is
\[\bp 0 & 1 \\ 1 & e \ep.\]
\begin{definition}
For integers $a \geq 0$ and $b>0$ ($b \geq 0$ when $e=0$), a curve $C\subset X_e$ is \emph{of type $(a,b)$} if it is irreducible and
its homology class is $aL+bM\in H_2(X_e)$. 
\end{definition}
\begin{remark}
Unless stated otherwise, we shall suppose that $C$ is 
rational and cuspidal.
\end{remark}

We denote 
\[d=C^2=(aL+bM)^2=2ab+b^2e.\]
Furthermore let
\[c=\gcd(a,b),\ a'=a/c,\ b'=b/c.\]
The arithmetic genus of $C$ is given by the formula (see \cite[Corollary 3.1.4]{MOEPHD}). 
\begin{equation}\label{eq:genus}
g=(a-1)(b-1)+\frac12b(b-1)e.
\end{equation}

\subsection{Singular points and semigroups}\label{sec:semigroups}

(We refer to \cite[Chapter 4]{Wall-book} for more details about semigroups of singular points.)

Let $z$ be a cuspidal singular point of $C$. We can associate with $z$ a semigroup $S_z$ of non--negative integers. For a quasi--homogeneous
singularity given by $x^p-y^q=0$ with $p,q$ coprime, the semigroup is generated by $p$ and $q$. We always assume that $0\in S_z$.

Given any semigroup in $\mathbb{Z}_{\ge 0}$ we define a function $R_S\colon \mathbb{Z}\to\mathbb{Z}$
\begin{equation}\label{eq:defR} R_S(t)=\# S\cap [0,t).
\end{equation}
We have the following fact. 
\begin{lemma}\label{lem:standardvaluesofR}
If $S$ is a semigroup of a singular point $z$ with Milnor number $\mu$ (the genus of the link of the singular point is then $\mu/2$),
then for all $m\ge 0$ we have $R_S(m+\mu)=m+\mu/2$.
\end{lemma}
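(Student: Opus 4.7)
The plan is to combine two standard facts about the semigroup of a cuspidal singularity with a straightforward splitting of the counting interval. The two facts are: first, the conductor of $S_z$ equals $\mu$, meaning that every integer $n\ge\mu$ lies in $S_z$ while $\mu-1$ does not; second, the number of gaps $\#(\mathbb{Z}_{\ge 0}\setminus S_z)$ equals the delta invariant $\delta_z$ of the singularity, which for a cusp (locally irreducible singularity) satisfies $\delta_z=\mu/2$ by Milnor's formula $\mu=2\delta-r+1$ with $r=1$. Both facts I would simply cite from Wall's book (Chapter 4), which is already referenced in Section~\ref{sec:semigroups}.

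Given these, the computation is immediate. I would split
\[
S_z\cap[0,m+\mu) \;=\; \bigl(S_z\cap[0,\mu)\bigr)\,\sqcup\,\bigl(S_z\cap[\mu,m+\mu)\bigr).
\]
The second piece contains all $m$ integers in $[\mu,m+\mu)$ by the conductor property, contributing $m$. The first piece consists of the $\mu$ integers in $[0,\mu)$ minus the gaps of $S_z$, all of which lie in $[0,\mu)$ since $\mu$ is the conductor; by the delta invariant count there are $\mu/2$ gaps, so this piece contributes $\mu-\mu/2=\mu/2$. Adding the contributions gives $R_{S_z}(m+\mu)=m+\mu/2$, as claimed.

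There is essentially no obstacle here; the lemma is really a bookkeeping statement. The only point worth being careful about is that both ingredients (conductor equals $\mu$, and $\delta=\mu/2$) genuinely use that the singular point is a cusp, i.e.\ locally irreducible — for a multi-branch singularity the relation between $\mu$ and $\delta$ changes, and the equality $R_S(m+\mu)=m+\mu/2$ would fail. Since the paper restricts attention to cuspidal curves, this hypothesis is in force throughout, so the lemma applies uniformly to all singular points of $C$.
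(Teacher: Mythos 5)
Your argument is correct and is essentially the paper's own proof: the paper likewise reduces the lemma to the two facts that the complement $\mathbb{Z}_{\ge 0}\setminus S$ has exactly $\mu/2$ elements and that its largest element is $\mu-1$, citing Wall's book for both. Your explicit splitting of $[0,m+\mu)$ at the conductor and your remark that irreducibility is what gives $\delta=\mu/2$ are just slightly more detailed renderings of the same one-line computation.
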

\begin{proof}
The complement $\Z_{\ge 0}\setminus S$ has precisely $\mu/2$ elements and the largest is $\mu-1$; see \cite[Chapter 4]{Wall-book}.
\end{proof}

Given any two functions $R_1,R_2\colon\Z\to\Z$ bounded from below we define their \emph{infimum convolution} to be
\[R_1\diamond R_2(t)=\min_{k\in\Z} R_1(k)+R_2(t-k).\]
The infimum convolution is clearly commutative and associative.

\begin{definition}\label{def:Rfunction}
Let $C\subset X_e$ be a cuspidal curve (not necessarily rational). Then the $R$--function of $C$ is defined as
\[R=R_{S_1}\diamond R_{S_2}\diamond \ldots\diamond R_{S_n},\]
where $S_1,\ldots,S_n$ are semigroups corresponding to singular points of $C$.
\end{definition}
We have the following corollary to Lemma~\ref{lem:standardvaluesofR}.
\begin{corollary}\label{cor:standardvaluesofR2}
If $g$ is the sum of genera of the links of singular points of a cuspidal curve $C$, then $R(2g+m)=g+m$ for any $m\ge 0$.
\end{corollary}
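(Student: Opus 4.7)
The plan is to prove both inequalities $R(2g+m)\le g+m$ and $R(2g+m)\ge g+m$ separately.

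For the upper bound, I would unfold the definition of infimum convolution. By associativity,
\[R(t)=\min_{k_1+\cdots+k_n=t}\bigl(R_{S_1}(k_1)+\cdots+R_{S_n}(k_n)\bigr).\]
Now simply pick the test tuple $k_i=\mu_i$ for $i=1,\dots,n-1$ and $k_n=\mu_n+m$. The total is $\sum\mu_i+m=2g+m$, as required, and by Lemma~\ref{lem:standardvaluesofR}, $R_{S_i}(\mu_i)=\mu_i/2$ and $R_{S_n}(\mu_n+m)=\mu_n/2+m$, so the sum equals $g+m$. Hence $R(2g+m)\le g+m$.

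For the lower bound I would prove by induction on $n$ the stronger statement
\[R_{S_1}\diamond\cdots\diamond R_{S_n}(t)\ge\max(0,t-g),\]
valid for every $t\in\Z$. The base case $n=1$ follows at once from the facts that $R_S(t)=0$ for $t\le 0$, that $R_S(t)=t-\#\{\text{gaps}<t\}\ge t-\mu/2$ for $0\le t\le \mu$, and that $R_S(t)=t-\mu/2$ for $t\ge\mu$. For the inductive step, write $R=R'\diamond R_{S_n}$ with $R'$ the convolution of the first $n-1$ terms and $g'=g-\mu_n/2$. Using the inductive hypothesis $R'(k)\ge\max(0,k-g')$ and the base bound $R_{S_n}(t-k)\ge\max(0,t-k-\mu_n/2)$, one is reduced to checking
\[\min_{k\in\Z}\bigl(\max(0,k-g')+\max(0,t-k-\mu_n/2)\bigr)=\max(0,t-g).\]
This is an elementary piecewise-linear computation: on the rectangle $g'\le k\le t-\mu_n/2$ both summands are non-negative and add to $t-g$; outside, one summand is zero and the other can only increase as $k$ moves away. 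Specializing to $t=2g+m$ gives $R(2g+m)\ge g+m$, which together with the upper bound yields the equality.

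The main potential obstacle is bookkeeping in the convolution to make sure the integer-valued optimization truly has its minimum attained by the indicated piecewise-linear envelope, and that the parity statement $\mu_i/2\in\Z$ (true for cusps because $\mu=2\delta$ for an irreducible plane curve singularity) is used correctly so that all tested tuples lie in $\Z^n$. Once these are handled, the inductive argument is routine, and the corollary falls out directly.
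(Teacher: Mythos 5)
Your proof is correct and matches the route the paper intends: the corollary is stated without proof as an immediate consequence of Lemma~\ref{lem:standardvaluesofR}, and your argument simply fills in the details (the test tuple $k_i=\mu_i$, $k_n=\mu_n+m$ for the upper bound, and the elementary bound $R_{S_i}(t)\ge\max(0,t-\mu_i/2)$ fed through the infimum convolution for the lower bound). Both halves check out, including the parity point that each $\mu_i$ is even for a cusp.
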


\section{A criterion from the complement of $C$ and related invariants}\label{HF}
Suppose that $C$ is a rational cuspidal curve in $X_e$. We consider $N$, a tubular neighbourhood of $C$ in $X_e$. Let $Y$ be the boundary of $N$ with \emph{reversed}
orientation and let $W=X_e\sm N$. We have
\[\partial W=Y.\]

The main goal of this section is to give a proof of Theorem~\ref{THF}. The key result in our proof is the following theorem by Ozsv\'ath and Szab\'o; see 
\cite[Theorem 9.6]{OS-absolutely}. It relies on the fact that the intersection form $H_2(W)\times H_2(W)\to\Z$ is negative definite.

\begin{theorem}\label{thm:os}
For any \spinc{} structure $\sss$ on $Y$ extending to a \spinc{} structure $\sst$ on $W$ we have
\begin{equation}\label{eq:osd} d(Y,\sss)\ge \frac14\left( c_1^2(\sst)-3\sigma(W)-2\chi(W) \right),\end{equation}
where $d(Y,\sss)$ is a correction term.
\end{theorem}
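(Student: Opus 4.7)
The plan is to realize $W$ as a \spinc{} cobordism and apply the grading-shift formula for the induced map on Heegaard--Floer homology, using negative-definiteness to ensure non-triviality on the $U$-tower.

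First I would puncture $W$ at an interior point to obtain $W^\circ=W\setminus\mathrm{int}\,B^4$, a smooth cobordism from $S^3$ to $Y$. Puncturing does not affect the intersection form, so $W^\circ$ remains negative definite, and the \spinc{} structure $\sst$ extends uniquely over $B^4$. The cobordism induces a graded map
\[
F^+_{W^\circ,\sst}\colon HF^+(S^3,\sss_0)\longrightarrow HF^+(Y,\sss)
\]
whose $\mathbb{Q}$-grading shift is, by the cobordism grading-shift formula, $\tfrac14(c_1^2(\sst)-2\chi(W^\circ)-3\sigma(W^\circ))$. Substituting $\chi(W^\circ)=\chi(W)-1$ and $\sigma(W^\circ)=\sigma(W)$ and matching the standard normalization of the $d$-invariant, this shift translates into exactly the quantity on the right-hand side of \eqref{eq:osd}.

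The crux of the argument, and the main obstacle, is to show that $F^+_{W^\circ,\sst}$ acts non-trivially on the $U$-tower of $HF^+(Y,\sss)$. This is precisely where the hypothesis that the intersection form on $H_2(W)$ is negative definite enters: for a cobordism with $b_2^+=0$, the induced map on $HF^\infty$ is non-trivial (in fact an isomorphism). This is a deep input of the Ozsv\'ath--Szab\'o program that I would invoke as a black box rather than reprove, since its proof combines the full Heegaard--Floer cobordism machinery with a lattice-theoretic restriction on characteristic vectors of negative-definite forms. Once non-triviality is granted, the $d$-invariant inequality \eqref{eq:osd} follows by tracking the image of the tower generator of $HF^+(S^3)$ through the cobordism map and comparing its grading with $d(Y,\sss)$, the bottom of the tower in $HF^+(Y,\sss)$.
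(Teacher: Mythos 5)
The paper offers no proof of this statement: it is imported wholesale as a citation to Ozsv\'ath--Szab\'o \cite[Theorem 9.6]{OS-absolutely}, so the only meaningful comparison is between your sketch and the original proof of that theorem. Your outline is the standard argument and is essentially correct: puncture $W$, apply the grading-shift formula to the cobordism map $F^+_{W^\circ,\sst}$, and use the fact that a cobordism between rational homology spheres with $b_1=b_2^+=0$ induces an isomorphism on $HF^\infty$, hence a map on $HF^+$ that hits the $U$-tower in all sufficiently large gradings; chasing the bottom of the tower of $HF^+(S^3)$ then yields the inequality. (Note that the non-vanishing input needs $b_1(W^\circ)=0$ as well as $b_2^+=0$; this holds here because $H_1(W)\cong\Z/c\Z$ is finite.) One arithmetic point: with $\chi(W^\circ)=\chi(W)-1$ and $\sigma(W^\circ)=\sigma(W)$ the grading shift is $\frac14\left(c_1^2(\sst)-2\chi(W)-3\sigma(W)\right)+\frac12$, so it is \emph{not} exactly the right-hand side of \eqref{eq:osd}; your argument in fact proves the sharper bound $d(Y,\sss)\ge\frac14\left(c_1^2(\sst)+b_2(W)\right)$, which is the form in which Ozsv\'ath and Szab\'o state the theorem and which implies \eqref{eq:osd} a fortiori. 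This is harmless --- indeed the sharper form is the one implicitly used later in the paper to obtain \eqref{eq:rmg} --- but you should not claim the shift ``translates into exactly'' the stated right-hand side. With that correction, your proposal is a sound reconstruction of the cited result, modulo the $HF^\infty$ non-vanishing theorem that you reasonably take as a black box.
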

In order to use this result we need to decrypt the information encoded in inequality \eqref{eq:osd}. We will do this in the following steps, following
the pattern used in \cite{BCG14, BHL14,BorodzikHF}.
\begin{itemize}
\item Describe $Y$ as a surgery on a knot in $S^3$ and compute its $d$--invariants.
\item Study homological properties of $W$, in particular show that the intersection form is negative definite.
\item Check which \spinc{} structures on $Y$ extend over $W$.
\item Compute $c_1^2(\sst)$ for such structures.
\item Compute $d(Y,\sss)$.
\end{itemize}
\subsection{The manifold $Y$ and its $d$--invariants}\label{sec:onY}

We shall need the following characterization of $Y$.
\begin{proposition}\label{prop:surgerycoeff}
Let $K_1,\ldots,K_n$ be the links of the singularities on the curve $C$, and $K=K_1\#\ldots\#K_n$. 
Then $-Y$ is a surgery on $S^3$ along $K$ with surgery coefficient $C^2=2ab+b^2e=d$.
\end{proposition}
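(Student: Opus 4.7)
The plan is to exhibit a handle decomposition of the tubular neighborhood $N$ of $C$ with one $0$-handle and one $2$-handle, where the $2$-handle is attached along $K = K_1 \# \cdots \# K_n$ with framing $d$; then the standard correspondence between $2$-handle attachments and Dehn surgery gives $\partial N = S^3_d(K)$, and the orientation convention of Section~\ref{HF} identifies $\partial N$ with $-Y$.

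For the first step I use that $C$ is cuspidal: every singular point is locally irreducible, so the normalization map $\wt{C} \to C$ is a homeomorphism and $C$ is topologically $S^2$ with $n$ marked singular points $p_1, \ldots, p_n$. I pick small disjoint closed $4$-balls $B_i \subset X_e$ around each $p_i$ with $B_i \cap C$ a cone on $K_i$, and an embedded tree $T \subset C$ joining all the $p_i$. Thickening $T$ inside $X_e$ gives a regular neighborhood $U$, realized as the boundary connect sum of the $B_i$ along arcs of $T$; since $T$ is contractible, $U \cong D^4$. The intersection $C \cap U$ is a topological disk whose boundary in $\partial U \cong S^3$ visits each link $K_i$ once in sequence, hence is isotopic to $K_1 \# \cdots \# K_n$.

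For the second step, $D := \overline{C \setminus U}$ is a smooth disk (the closure of the complement of a disk in $S^2$) with $\partial D = K$, and all singularities of $C$ lie inside $U$. A tubular neighborhood of $D$ inside $X_e \setminus \operatorname{int} U$ is therefore a $2$-handle $H$ attached to $\partial U$ along $K$, and $N = U \cup H$. The boundary $\partial N$ is then, by definition of $2$-handle attachment, the $3$-manifold obtained from $\partial U = S^3$ by Dehn surgery on $K$ with slope equal to the framing of $H$.

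The remaining task, and the main technical point, is to identify this framing with $d = C^2$. Because $U \cong D^4$ is simply connected, a pushoff of $C \cap U$ inside $U$ can be chosen disjoint from $C \cap U$, so the self-intersection $C \cdot C$ is entirely accounted for by the framing of the core disk of $H$, which equals the framing of $H$ by definition. I expect the careful justification of this step to be the trickiest piece, since $C$ has genuine cone-like singularities inside $U$; the cleanest argument is probably to replace $C \cap U$ by a smoothly embedded disk in $U$ with the same boundary framing of $K$ in $\partial U$ and verify that this substitution does not alter the global self-intersection count.
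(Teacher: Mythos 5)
Your handle-decomposition argument is exactly the one the paper relies on: the proof given here simply defers to \cite{BorodzikHF}, where the plane case is proved by decomposing the tubular neighbourhood $N$ into a $4$-ball $U$ (a regular neighbourhood of a tree through the singular points, absorbing the cone neighbourhoods of each $p_i$) and a single $2$-handle attached along $K_1\#\cdots\#K_n$. So the route is the same, and steps one and two of your write-up are fine.

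The one place you should not follow your own suggestion is the framing identification. You propose to ``replace $C\cap U$ by a smoothly embedded disk in $U$ with the same boundary framing of $K$''; such a disk generally does not exist, since $K$ is a connected sum of nontrivial algebraic knots and hence is not smoothly slice in $D^4$. Fortunately no such replacement is needed. The $4$-manifold $N=U\cup H$ is a $0$-handle with a single $2$-handle attached along $K$ with some framing $f$, so $H_2(N)\cong\Z$ and its intersection form is the $1\times 1$ matrix $(f)$; a generator is represented by the core of $H$ capped off with \emph{any} (merely topologically embedded, possibly singular) surface in $U$ bounded by $K$, for instance $C\cap U$ itself. Since $C$ represents this generator and self-intersection is a homological invariant computed in $X_e$ via the inclusion $N\hookrightarrow X_e$ (which is an isomorphism on the relevant $H_2$ onto the span of $[C]$), we get $f=C^2=d$ directly, with no smoothing of the singular pieces required. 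With that substitution your argument is complete and agrees with the cited one.
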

\begin{proof}
The proof is the same as in the case when $X_e$ is the projective plane, see \cite{BorodzikHF}. 
\end{proof}
As a corollary we can write down homologies of $Y$.
\begin{corollary}
We have $H_1(Y)=\Z_d$ and $H_2(Y)=0$.
\end{corollary}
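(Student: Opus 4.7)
The plan is to leverage the surgery description of $Y$ given in Proposition~\ref{prop:surgerycoeff} together with standard computations for Dehn surgery on knots in $S^3$. Since homology is insensitive to orientation reversal, it suffices to compute $H_*(-Y)$, and by that proposition $-Y$ is obtained from $S^3$ by $d$-surgery on the knot $K = K_1 \# \ldots \# K_n$.

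For the computation of $H_1$, I would use the standard Mayer--Vietoris argument for Dehn surgery: writing $-Y = (S^3 \setminus \nu K) \cup_\phi (S^1 \times D^2)$, the long exact sequence reduces to computing the image of the meridian and longitude of $K$ under the gluing. Since $K$ is a knot in $S^3$, the complement $S^3 \setminus \nu K$ has $H_1 = \Z$ generated by the meridian, and the surgery slope attaches the meridian of the new solid torus along a curve homologous to $d$ times the meridian of $K$. Hence $H_1(-Y) = \Z/d\Z$, which gives $H_1(Y) = \Z_d$.

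For $H_2$, the cleanest route is Poincaré duality together with universal coefficients. Since $Y$ is a closed oriented 3-manifold,
\[H_2(Y) \cong H^1(Y) \cong \Hom(H_1(Y),\Z) \oplus \mathrm{Ext}(H_0(Y),\Z).\]
The Ext term vanishes because $H_0(Y)=\Z$ is free, and $\Hom(\Z_d,\Z)=0$ since $\Z_d$ is torsion (and $d = 2ab+b^2e > 0$ for a non-trivial type $(a,b)$ curve). Therefore $H_2(Y)=0$.

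The main (minor) obstacle is really just ensuring $d \neq 0$, so that the surgery coefficient is genuinely non-zero and $H_1$ is actually torsion; this is immediate from the hypothesis that $C$ is an irreducible cuspidal curve of type $(a,b)$ with $a\geq 0$, $b>0$, and $e\geq 0$ (the case $b=0$ would give a fiber, which is not cuspidal in an interesting sense and is excluded). No Heegaard--Floer machinery is needed for this corollary; it is purely topological and follows at once from the surgery presentation.
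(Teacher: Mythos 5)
Your proposal is correct and matches the paper's (implicit) reasoning: the corollary is stated as an immediate consequence of Proposition~\ref{prop:surgerycoeff}, and the standard Mayer--Vietoris/Poincar\'e duality computation for $d$-surgery on a knot in $S^3$ is exactly the argument intended. Your care about $d>0$ is consistent with the paper's Remark~\ref{rem:d2g}, which notes $d-2g>0$.
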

\begin{lemma}\label{lem:gK}
The genus of the knot $K$ is equal to $g$ (defined in \eqref{eq:genus}).
\end{lemma}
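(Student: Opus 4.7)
The plan is to identify the genus of $K$ with the arithmetic genus of $C$ and then appeal to the formula \eqref{eq:genus}. The knot $K$ is a connected sum, so its Seifert genus equals the sum of genera of the pieces:
\[ g(K) = \sum_{j=1}^n g(K_j). \]
For the link $K_j$ of a cuspidal (i.e.\ locally irreducible) singular point, it is classical that $g(K_j) = \mu_j/2 = \delta_j$, where $\mu_j$ is the Milnor number and $\delta_j$ the delta invariant; this is exactly the statement already invoked in Lemma~\ref{lem:standardvaluesofR}.

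Next I would invoke the genus formula for a reduced irreducible curve with only unibranched singularities,
\[ p_a(C) = p_g(C) + \sum_{j=1}^n \delta_j. \]
Since $C$ is rational, the geometric genus vanishes, $p_g(C) = 0$, and hence
\[ g(K) = \sum_{j=1}^n \delta_j = p_a(C). \]

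It remains to compute $p_a(C)$ for a type $(a,b)$ curve in $X_e$. This is precisely the content of \eqref{eq:genus}: by the adjunction formula applied to the class $aL+bM \in H_2(X_e;\Z)$, with canonical class $K_{X_e} = -2M_0 - (e+2)L = -2M + (e-2)L$, one obtains
\[ p_a(C) = \tfrac12\,C\cdot(C+K_{X_e})+1 = (a-1)(b-1)+\tfrac12 b(b-1)e = g. \]
Combining with the previous paragraph gives $g(K)=g$, as claimed. No step is really an obstacle here; the only thing to double-check is the sign/form of the canonical class of $X_e$, but this is standard and in any case the adjunction computation was already recorded as \eqref{eq:genus} and referenced from \cite[Corollary 3.1.4]{MOEPHD}, so the lemma reduces to a one-line assembly of known facts.
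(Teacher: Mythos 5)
Your proof is correct and is essentially the same argument the paper gives (the paper's proof is a one-liner: "This follows immediately from the genus formula \eqref{eq:genus} and the fact that $C$ is rational"); you have simply spelled out the intermediate steps — additivity of genus under connected sum, $g(K_j)=\delta_j$ for algebraic knots, and $p_a=p_g+\sum\delta_j$ with $p_g=0$ — all of which are the facts the paper is implicitly invoking.
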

\begin{proof}
This follows immediately from the genus formula \eqref{eq:genus} and the fact that $C$ is rational.
\end{proof}
\begin{remark}\label{rem:d2g}
We notice that  $d-2g=2a+2b+be-2>0$.
\end{remark}

Since $Y$ is presented as an integer surgery on a knot with slope $d$, it has an enumeration of \spinc{} structures $\sss_m$, where
$m\in[-d/2,d/2)$. Details are presented in Section~\ref{sec:spinc}. Given that result, we have the following:

\begin{proposition}[see \expandafter{\cite[Theorem 5.1]{BorodzikHF}}]\label{prop:dinv}
The $d$--invariant $d(Y,\sss_m)$ is equal to
\[-d(Y,\sss_m)=\frac{(d-2m)^2-d}{4d}-2(R(m+g)-m),\]
where $R$ is the $R$--function from Definition~\ref{def:Rfunction}. 
\end{proposition}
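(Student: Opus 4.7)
The plan is to adapt the argument of \cite[Theorem 5.1]{BorodzikHF} to the Hirzebruch surface setting. The only changes from the plane case are the surgery coefficient, which is now $d=2ab+b^2e$, and the genus formula, now $g=(a-1)(b-1)+\frac{1}{2}b(b-1)e$; neither alteration affects the shape of the argument.

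First I would invoke Proposition~\ref{prop:surgerycoeff} to identify $-Y$ with integer $d$-surgery on the connected sum $K=K_1\#\cdots\#K_n$ of the singularity links, which has Seifert genus $g$ by Lemma~\ref{lem:gK}. Orientation reversal gives $d(Y,\sss_m)=-d(S^3_d(K),\sss_m)$, reducing the statement to a computation in $S^3$. Since each $K_i$ is an algebraic knot, hence an $L$-space knot, and the class of $L$-space knots is closed under connected sum, $K$ itself is an $L$-space knot. Remark~\ref{rem:d2g} guarantees $d>2g$, so the surgery is ``large'' and in particular yields an $L$-space.

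Next I would apply the Ozsv\'ath--Szab\'o formula for $d$-invariants of large surgeries on $L$-space knots,
\[
d(S^3_d(K),\sss_m)=\frac{(d-2m)^2-d}{4d}-2V_m(K),
\]
where $V_m(K)$ is the $m$-th torsion coefficient and the first summand is the $d$-invariant of the corresponding surgery on the unknot. It then remains to identify $V_m(K)$ with $R(m+g)-m$. For a single algebraic knot $K_i$ with semigroup $S_i$ and genus $g_i$, the standard identification $V_m(K_i)=R_{S_i}(m+g_i)-m$ was already recorded in \cite{BorodzikHF}. Because the $V$-invariants of a connected sum of $L$-space knots are obtained from the individual ones by the infimum-convolution rule, and this is precisely how $R$ is built in Definition~\ref{def:Rfunction}, the identification $V_m(K)=R(m+g)-m$ follows. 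Substituting gives the claim.

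The main technical point is to verify this identification over the full range $m\in[-d/2,d/2)$ of \spinc{} labels. For $|m|\ge g$ the $V$-invariants take their boundary values ($V_m=0$ for $m\ge g$ and $V_m=-m$ for $m\le -g$, by the usual symmetry of $L$-space knots), and Corollary~\ref{cor:standardvaluesofR2} shows that $R(m+g)-m$ agrees with these boundary values. Hence the actual content is concentrated in $m\in[-g,g]$, which is exactly the regime treated in \cite{BorodzikHF}, and the argument transfers without modification.
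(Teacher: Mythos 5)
Your proposal follows essentially the same route as the paper: identify $-Y$ with $d$--surgery on the connected sum $K$ of the singularity links, apply the large surgery formula for $d$--invariants, and identify the correction term $V_m(K)$ with $R(m+g)-m$ via the semigroup/infimum--convolution description coming from \cite{BorodzikHF}. One inaccuracy is worth flagging: the class of $L$--space knots is \emph{not} closed under connected sum (by a theorem of Krcatovich, nontrivial $L$--space knots are prime), so $K$ is in general not an $L$--space knot and $S^3_d(K)$ need not be an $L$--space. This does not damage your argument, because the formula $d(S^3_d(K),\sss_m)=\frac{(d-2m)^2-d}{4d}-2V_m(K)$ holds for \emph{every} knot once $d\ge 2g(K)$ (which Remark~\ref{rem:d2g} supplies); the role of the $L$--space condition on the summands $K_i$ is only to make $V_m(K)$ computable from the Alexander polynomials, hence from the semigroups, which is exactly what your infimum--convolution step uses. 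With that justification corrected, your write-up matches the paper's proof, including the treatment of the boundary range of $m$ via Corollary~\ref{cor:standardvaluesofR2} and the sign flip under orientation reversal.
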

\begin{proof}
The proof of Proposition~\ref{prop:dinv} consists of two steps. First, the knot $K$ (see Proposition~\ref{prop:surgerycoeff})
is a connected sum of algebraic knots (or, more generally, $L$--space knots). Therefore one can compute the Heegaard--Floer chain complex $CFK^\infty(K)$
using the Alexander polynomial of $K_1,\ldots,K_n$. The Alexander polynomial of an algebraic knot is tightly related to the semigroup of the singular point
(see \cite{Wall-book}), so the $R$--function enters the formula.

The second part is expressing Heegaard--Floer homology of $+d$ surgery on $K$ (that is of $-Y$) in terms of $CFK^\infty(K)$. This part uses the fact that
$d>2g$; see Remark~\ref{rem:d2g} above. Lastly, we note that by \cite{OS-absolutely} reversing the orientation amounts to reversing the sign of the $d$--invariant.
\end{proof}

\subsection{Homological properties of $W$}

We begin with the following result.
\begin{lemma}
We have $H_2(W)=\Z$ and $H_1(W)=\Z/c\Z$.
\end{lemma}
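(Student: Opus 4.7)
The plan is to run the Mayer--Vietoris sequence for the decomposition $X_e=N\cup W$, whose overlap is homotopy equivalent to $Y$. First I would observe that $N$ deformation retracts onto $C$, and that since every singularity of $C$ is a cusp (locally irreducible), the normalisation $\C P^1\to C$ is a continuous bijection from a compact space to a Hausdorff space, hence a homeomorphism. Therefore $C$ is topologically $S^2$, so $H_2(N)\cong\Z$ (generated by the fundamental class $[C]$) while $H_1(N)=0$. Combining this with $H_2(X_e)=\Z L\oplus\Z M$, $H_1(X_e)=0$, and the preceding corollary $H_2(Y)=0$, $H_1(Y)=\Z_d$, the Mayer--Vietoris sequence collapses to
\[0\longrightarrow\Z\oplus H_2(W)\longrightarrow\Z^2\xrightarrow{\ \partial\ }\Z_d\longrightarrow H_1(W)\longrightarrow 0.\]

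The crux is to identify the connecting map $\partial$ with intersection against $[C]$. Concretely, for a surface $D\subset X_e$ transverse to $C$ one decomposes $D=D_N\cup D_W$ with $D_N=D\cap N$ consisting of small disks around the $|D\cdot C|$ intersection points; the boundary of $D_W$ is then $(D\cdot C)$ copies of the meridian $\mu$ of $C$ in $Y$, which generates $H_1(Y)\cong\Z_d$. Applying this to the generators $L$ and $M$ and using the intersection numbers $L\cdot C=b$ and $M\cdot C=a+be$, the image of $\partial$ is the subgroup of $\Z_d$ generated by $b$ and $a+be$, which coincides with the cyclic subgroup generated by $\gcd(b,a+be)=\gcd(a,b)=c$. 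This image has order $d/c$, so exactness yields
\[H_1(W)\cong\Z_d/\langle c\rangle\cong\Z/c\Z.\]

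Finally, for $H_2(W)$ I would exploit that the leftmost arrow is injective, so $H_2(W)$ embeds in $\Z^2$ and is in particular torsion--free. The kernel of $\partial$ has rank $2$ (it is a finite--index sublattice of $\Z^2$), so $\Z\oplus H_2(W)$ also has rank $2$, forcing $H_2(W)$ to have rank $1$; a torsion--free abelian group of rank $1$ embedding in $\Z^2$ must be $\Z$. The main subtlety I foresee is the geometric identification of $\partial$ with intersection against $[C]$, in particular checking signs and that $\mu$ does indeed generate $H_1(Y)$ with the orientations inherited from the surgery description of Proposition~\ref{prop:surgerycoeff}; once that is in place, what remains is a routine exact--sequence chase.
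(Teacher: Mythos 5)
Your argument is correct, and it reaches the same exact sequence as the paper by a slightly different route. The paper uses the long exact sequence of the pair $(X_e,W)$ together with excision and the Thom isomorphism, so that $H_*(X_e,W)\cong H_*(N,Y)$ is $\Z$ in degrees $2$ and $4$, and the key map $H_2(X_e)\to H_2(X_e,W)\cong\Z$ is literally $z\mapsto z\cdot C$; its image is generated by $L\cdot C=b$ and $M\cdot C=a+be$, hence by $c$, and $H_1(W)$ is read off as the cokernel $\Z/c\Z$ while $H_2(W)$ is the rank--one kernel. You instead run Mayer--Vietoris for $X_e=N\cup W$, which forces you to supply two extra inputs that the paper's route sidesteps: that $N\simeq C\cong S^2$ (your normalisation argument for this is fine, since all singularities are unibranched) and that $H_1(Y)\cong\Z_d$ is generated by the meridian of $C$ (true, and available from the surgery description or from the pair sequence for $(N,Y)$). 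The connecting map then lands in $\Z_d$ rather than $\Z$, so you compute $H_1(W)$ as $\Z_d/\langle c\rangle\cong\Z/c\Z$ instead of as a cokernel of a map to $\Z$; both computations hinge on the identical arithmetic fact $\gcd(b,a+be)=c$, and your rank/torsion-freeness argument for $H_2(W)\cong\Z$ is sound. The trade-off is that the paper's version is self-contained at this point in the text, whereas yours leans on the preceding corollary about $H_1(Y)$; in exchange your version makes the geometric meaning of the meridian relation $d\mu=0$ more visible.
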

\begin{proof}
Set $X=X_e$. Consider the long exact sequence of the pair $(X,W)$. 
By excision, $H_*(X,W)\cong H_*(N,Y)$. The latter group is $\Z$ in degrees $2,4$ and $0$ otherwise by Thom isomorphism. Hence $H_3(W)=\Z$ and the long
exact sequence of the pair gives two following exact sequences.
\[\xymatrix@R=0.01in @C=0.25in{
0\ar[r]& H_4(X)\cong\Z\ar[r]&H_4(X,W)\cong\Z\ar[r]&H_3(W)\ar[r]&0&\\
0\ar[r]& H_2(W)\ar[r]&H_2(X)\ar[r]& H_2(X,W)\cong\Z\ar[r]&H_1(W)\ar[r]& 0.\\}\]
It follows that  $b_3(W)=0$. To study $H_2(W)$ we observe that
the map from $H_2(X)\to H_2(X,W)\cong \Z$ can be described explicitly. Namely, for $z\in H_2(X)$ represented by a (real) surface $Z$ intersecting
$C$ transversally, $z$ is mapped to $Z\cdot C$ times the generator. In particular, $L$ is mapped to $L\cdot C=b$ and $M$ is mapped to $M\cdot C=a+be$.
The image of $H_2(X)$ in $H_2(X,W)$ is therefore generated by $a,b$. Hence
\[H_2(W)=\Z\textrm{ and }H_1(W)=\Z/c\Z.\]
\end{proof}

Our aim is to compute the intersection form on $W$. Notice that the class $H=(a'+b'e)L-b'M$ intersects trivially with $C$,
so it belongs to the kernel of $H_2(X)\to H_2(X,W)$. In particular it descends to a class in $H_2(W)$ (by a slight abuse
of notation we will still denote it by $H$) with self--intersection equal to
\[(-2a'b'-b'b'e)=-\frac{d}{c^2}.\]
\begin{lemma}
The class $H$ generates $H_2(W)$.
\end{lemma}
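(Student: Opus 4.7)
The plan is to leverage the short exact sequence
\[0\to H_2(W)\to H_2(X)\to H_2(X,W)\cong\Z\to H_1(W)\to 0\]
already extracted from the long exact sequence of the pair $(X,W)$. By the explicit description in the previous proof, the map $H_2(X)\to H_2(X,W)\cong\Z$ is intersection with $C$: it sends $L\mapsto b$ and $M\mapsto a+be$. Thus $H_2(W)$ is canonically identified with the kernel of the homomorphism $\varphi\colon\Z\langle L,M\rangle\to\Z$, $\varphi(\alpha L+\beta M)=\alpha b+\beta(a+be)$.

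Next I would compute this kernel by elementary means. The image of $\varphi$ is the subgroup $\gcd(b,a+be)\Z=\gcd(a,b)\Z=c\Z$, consistent with $H_1(W)\cong\Z/c\Z$. Since $\varphi\colon\Z^2\to c\Z$ is surjective onto its image, its kernel is infinite cyclic, and a generator is any primitive integer vector $(\alpha,\beta)$ satisfying $\alpha b+\beta(a+be)=0$. Writing $a=ca'$, $b=cb'$ with $\gcd(a',b')=1$, this equation becomes $\alpha b'+\beta(a'+b'e)=0$, and $\gcd(b',a'+b'e)=\gcd(b',a')=1$ forces $\alpha=(a'+b'e)k$, $\beta=-b'k$ for some $k\in\Z$. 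Taking $k=1$ gives the primitive kernel generator $(a'+b'e,-b')$, i.e.\ the class $H=(a'+b'e)L-b'M$.

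The only thing left to verify is that $H$, viewed as a class in $H_2(X)$, lifts to the class in $H_2(W)$ that we already denoted by $H$ at the end of the previous paragraph in the excerpt. This is automatic: the identification $H_2(W)\hookrightarrow H_2(X)$ provided by the exact sequence is precisely the inclusion-induced map, and a class in $H_2(X)$ annihilated by $\varphi$ is represented by a surface that can be pushed off $C$ and thus already lies in $W$. The self-intersection calculation $H^2=-d/c^2$ done in the excerpt is an independent sanity check.

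There is no real obstacle here; the argument is a one-line kernel computation in $\Z^2$ once the intersection-theoretic description of $\varphi$ is in hand. The only minor subtlety is making sure the gcd identity $\gcd(b,a+be)=\gcd(a,b)$ is used, which is what ties the rank of $H_2(W)$ and the order of $H_1(W)$ together into a single coherent picture.
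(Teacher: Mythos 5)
Your argument is correct, but it is not the one the paper uses. You identify $H_2(W)$ with the kernel of the map $H_2(X)\to H_2(X,W)\cong\Z$ (intersection with $C$), which is legitimate because the long exact sequence of the pair, as set up in the preceding lemma, gives $0\to H_2(W)\to H_2(X)\to H_2(X,W)$; you then compute that kernel by elementary number theory, using $\gcd(b',a'+b'e)=\gcd(a',b')=1$ to see that the primitive solution of $\alpha b+\beta(a+be)=0$ is exactly $(a'+b'e,-b')$, i.e.\ the class $H$. The paper instead argues via the intersection form: it computes $\bigl|\coker\bigl(H_2(W)\to\Hom(H_2(W),\Z)\bigr)\bigr|=d/c^2$ from the exact sequence $0\to\coker\to H_1(Y)\to H_1(W)\to 0$ together with $|H_1(Y)|=d$ and $|H_1(W)|=c$, and then observes that since $H^2=-d/c^2$ already realises this order, $H$ cannot be a nontrivial multiple of a generator. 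Your route is more elementary and arguably cleaner --- it needs no Poincar\'e--Lefschetz duality beyond what was already used, and it pins down the generator directly rather than by a counting argument --- while the paper's route has the side benefit of establishing the order of the cokernel of the intersection form, a quantity it needs anyway for the \spinc{} analysis. One small point: your final paragraph about ``pushing the surface off $C$'' is unnecessary; the identification of the class $H\in\ker\varphi\subset H_2(X)$ with the class called $H$ in $H_2(W)$ is exactly what the injectivity of $H_2(W)\to H_2(X)$ in the exact sequence provides, as the paper already notes when it says $H$ ``descends'' to $H_2(W)$.
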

\begin{proof}
The intersection form on $W$, that is the map $H_2(W)\to \Hom(H_2(W),\Z)$ is given by the sequence of maps
\[H_2(W)\to H_2(W,Y)\stackrel{\simeq}\to H^2(W)\stackrel{\simeq}{\leftarrow} \Hom(H_2(W),\Z).\]
Here the first map is the exact sequence of the pair $(W,Y)$, the second map is Poincar\'e duality isomorphism and the third follows from the universal
coefficient theorem. In particular, $\coker \left(H_2(W)\to \Hom(H_2(W),\Z)\right)$ is isomorphic to $\coker \left(H_2(W)\to H_2(W,Y)\right)$. With $Y=\partial W$, we have
\[0\to \coker\left( H_2(W)\to H_2(W,\partial W)\right)\to H_1(Y)\to H_1(W)\to 0.\]
All the groups in the short exact sequence are finite, so taking the cardinalities we obtain 
\[|H_1(Y)|=|H_1(W)|\cdot |\coker \left(H_2(W)\to H_2(W,\partial W)\right)|.\]
It follows that
\[|\coker \left(H_2(W)\to \Hom(H_2(W),\Z)\right)|=d/c^2.\]
As $H_2(W)$ has rank one, the cardinality of the cokernel is precisely the absolute value of the self--intersection of a generator. 
If $H$ were a nontrivial multiple of a generator, the cokernel of the intersection form would be smaller than $d/c^2$.
\end{proof}

We notice that by the universal coefficient theorem, $H^2(W)\cong \Z\oplus\Z/c\Z$, in particular $H^2(W;\Q)\cong\Q$. The classes $L$ and $M$
can be regarded as classes in $H^2(W)$, under the composition $H_2(X)\to H^2(X)\to H^2(W)$, where the first map is Poincar\'e duality and the
second is the restriction homomorphism. 
We have $L\cdot H=-b'$ and $M\cdot H=a'$, so in $H^2(W;\Q)$, $L$ is $-b'$ times the generator and $M$ is $a'$ times the generator.
Since the intersection form on $H^2(W)$ is the inverse of the intersection form on the non-torsion part of $H_2(W)$, the classes in $H^2(W;\Z)$ represented
by $L$ and $M$ have the following intersections.

\begin{equation}
L^2=-\frac{b^2}{d},\ M^2=-\frac{a^2}{d},\ L\cdot M=-\frac{ab}{d}.\label{eq:lsquare}
\end{equation}

\subsection{\spinc{} structures on $Y$ and $W$}\label{sec:spinc}
A \spinc{}  structure on a manifold $N$ is a choice of a complex line bundle $L$ over $N$ and of a spin structure on the bundle $TN\otimes L^{-1}$.
When speaking of restricting or prolonging \spinc{}  structures from one submanifold to another, the intuition that the \spinc{}  structures are `like
line bundles' is very convenient.

On a rational homology sphere, \spinc{}  structures are in a bijective correspondence with elements in $H_1(M,\Z)\cong H^2(M,\Z)$. If $M$
is represented as a integral surgery along a knot in $S^3$ we have a simple description of the \spinc{}  structures on $M$, see \cite[Section 4]{OS-knot}. 
\begin{proposition}
Let $K$ be a knot in $S^1$ and $d>0$ be an integer.
Let $N$ be a four manifold obtained by attaching a two--handle to a ball $B^4$ along $K$ with  framing $d$ (in this way $\partial N=S^3_d(K)=:M$).
Then
\begin{itemize}
\item Any \spinc{}  structure on $M$ extends to $N$.
\item For any $m\in[-d/2,d/2)$, there is a \emph{unique} \spinc{}  structure on $M$, which extends to a \spinc{}  structure $\sst_m$ over $N$
such that
\[\langle c_1(\sst_m),\Sigma\rangle +2m=d,\]
where $\Sigma$ is a generator of $H_2(N)$ consisting of the core of the two handle capped with a Seifert surface for $K$.
\end{itemize}
\end{proposition}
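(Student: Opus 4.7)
The plan is to extract both statements from the long exact sequence of the pair $(N, M)$ combined with Poincaré--Lefschetz duality, then translate the cohomological information into the \spinc{} language via the action of $H^2$ on the torsor of \spinc{} structures.

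First I would record the basic topology of $N$: since $N$ is obtained from $B^4$ by attaching a single $2$--handle along $K$ with framing $d$, it has the homotopy type of $S^2$, so $H_0(N)=H_2(N)=\Z$ and the higher homology vanishes, with $\Sigma$ (the core union a Seifert surface) generating $H_2(N)$ and having self--intersection $d$. Since $M=S^3_d(K)$ is a rational homology sphere when $d\neq 0$, we have $H_1(M)=\Z_d$ and $H_2(M)=0$. Poincaré--Lefschetz duality identifies $H^3(N,M)\cong H_1(N)=0$ and $H^2(N,M)\cong H_2(N)=\Z$, so the long exact sequence of $(N,M)$ in cohomology reduces to
\[
0\to H^2(N,M)\to H^2(N)\to H^2(M)\to 0,
\]
which is $0\to\Z\xrightarrow{\cdot d}\Z\to\Z_d\to 0$, since the map $H^2(N,M)\to H^2(N)$ is Poincaré dual to the intersection pairing with $\Sigma$.

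For the first bullet, I would invoke the fact that \spinc{} structures on a (compact, oriented) $4$--manifold always exist, pick one $\sst_0$ on $N$, and let $\sss_0$ be its restriction to $M$. Any other \spinc{} structure on $M$ is of the form $\sss_0+\alpha$ for some $\alpha\in H^2(M;\Z)$, and by the surjectivity just established we can lift $\alpha$ to $\h\alpha\in H^2(N;\Z)$; then $\sst_0+\h\alpha$ extends $\sss_0+\alpha$. This gives extension of every \spinc{} structure.

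For the second bullet, I would parametrize \spinc{} structures on $N$ by their first Chern classes, which are exactly the characteristic elements of the intersection form. Since $H_2(N)=\Z\langle\Sigma\rangle$ with $\Sigma\cdot\Sigma=d$, the characteristic elements are the classes $\sst$ with $\langle c_1(\sst),\Sigma\rangle\equiv d\pmod 2$; writing $\langle c_1(\sst),\Sigma\rangle=d-2m$ parametrizes the \spinc{} structures on $N$ by $m\in\Z$. Because tensoring with $\alpha\in H^2(N)$ shifts $c_1$ by $2\alpha$, and because $\ker(H^2(N)\to H^2(M))=d\Z$ by the displayed sequence, two such $\sst_m,\sst_{m'}$ restrict to the same \spinc{} structure on $M$ exactly when $m\equiv m'\pmod d$. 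Since $H^2(M)$ has exactly $d$ elements, the interval $m\in[-d/2,d/2)$ picks out a unique representative per residue class, yielding the uniqueness in the stated form.

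The only mildly delicate point is lining up the signs and the factor of $2$ in the action of $H^2(N)$ on \spinc{} structures (so that the kernel of restriction on \spinc{} structures is $d\Z$ rather than $2d\Z$ in the $c_1$--parametrization); everything else is bookkeeping from the exact sequence. No step requires any genuinely hard input beyond Poincaré--Lefschetz duality and the torsor structure of \spinc{} structures.
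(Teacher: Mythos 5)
Your proof is correct. Note that the paper does not actually prove this proposition at all: it is stated as a citation to Ozsv\'ath--Szab\'o (\cite[Section 4]{OS-knot}), so there is no in-paper argument to compare against. What you supply is the standard self-contained derivation: the exact sequence $0\to H^2(N,M)\to H^2(N)\to H^2(M)\to 0$ identified with $0\to\Z\xrightarrow{\cdot d}\Z\to\Z_d\to 0$ via Poincar\'e--Lefschetz duality, existence of \spinc{} structures on compact oriented $4$--manifolds plus equivariance of restriction for the first bullet, and the $c_1$--parametrization of \spinc{} structures on $N$ by characteristic elements for the second. The one hypothesis you use implicitly and should flag is that $c_1$ is a \emph{bijection} onto characteristic elements only because $H^2(N;\Z)\cong\Z$ has no $2$--torsion (the action of $\alpha$ shifts $c_1$ by $2\alpha$); since you have already computed $H^2(N)$, this is a one-line remark rather than a gap. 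With that noted, the counting argument ($\ker$ of restriction is $d\Z$, hence $m\mapsto\sss_m$ descends to a bijection $\Z/d\Z\to\mathrm{Spin}^c(M)$, and $[-d/2,d/2)$ is a set of representatives) delivers exactly the stated uniqueness.
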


The above proposition allows us to characterise the \spinc{}  structures for $-Y$.

\begin{definition}
For any $m\in[-d/2,d/2)$ the \spinc{}  structure $\sss_m$ on $-Y$ is the \spinc{}  structure that extends to $\sst_m$ over $N$.
\end{definition}
Notice that a \spinc{} structure on $-Y$ induces a \spinc{} structure on $Y$.
We ask, which \spinc{} structures on $Y$ extend over $W$ and what is the first Chern class of such an extended \spinc{} structure. To answer
this question we note that if a \spinc{} structure $\sss_m$ on $Y$ extends over $W$, then it can be glued with $\sst_m$ on $N$ to form a \spinc{} structure
on the whole Hirzebruch surface $X_e$. Conversely, a \spinc{} structure on $X_e$ can be restricted to $W$. To study which \spinc{} structures on $Y$ extend
over $W$ it is enough to study restrictions of \spinc{} structures on $X_e$ to $W$.

By \cite[Section 1.4.2]{GS99}, \spinc{} structures on $X_e$ correspond to so--called characteristic elements on $X_e$, that is, to classes $x\in H^2(X_e)$ such that
for any $w\in H_2(X_e)$ we have $\langle x,w\rangle=w\cdot w\bmod 2$. Therefore, characteristic elements on $X_e$ are classes
$r_1L+r_2M$ such that $r_2$ is even and $r_1$ is congruent to $e$ modulo $2$.

So let us consider a class $r_1L+r_2M$ with $r_1,r_2$ as above. The corresponding \spinc{} structure on $X_e$ restricts to the \spinc{} structure $\sst$
on $N$ with 
\[\langle c_1(\sst),C\rangle=(r_1L+r_2M)\cdot C=r_1b+r_2a+r_2be.\] 
Let us define 
\[k=r_1b+r_2a+r_2be\textrm{ and }m=(d-k)/2.\] 
If $m\in[-d/2,d/2)$, it follows that $\sst$ restricts
to the class $\sss_m$ on $W$. 
Using \eqref{eq:lsquare} we can compute the square of the first Chern class of $\sst_m$
on $W$. Let us summarise this discussion in the following result.

\begin{lemma}\label{lem:spinc}
Let $m\in[-d/2,d/2)$. If $m=(d-k)/2$ is such that $k$ can be presented as $r_1b+r_2a+r_2be$ for $r_2$ even and $r_1\equiv e\bmod 2$, 
then the \spinc{} structure $\sss_m$ on $Y$
extends to a \spinc{} structure on $W$ having $c_1^2$ equal to
\[-\frac{(r_1b-r_2a)^2}{d}.\]
\end{lemma}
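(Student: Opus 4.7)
The plan is to assemble the ingredients laid out in the preceding paragraphs. First, I would parametrize \spinc{} structures on $X_e$ by their first Chern classes: since $X_e$ is simply connected, a \spinc{} structure $\sst$ on $X_e$ is uniquely determined by $c_1(\sst)\in H^2(X_e;\Z)$, and the admissible first Chern classes are exactly those of the form $r_1L+r_2M$ with $r_2$ even and $r_1\equiv e\bmod 2$. Any such $\sst$ restricts to \spinc{} structures on both $N$ and $W$ which agree on the common boundary $Y$. Conversely, given any \spinc{} extension of $\sss_m$ across $W$, gluing it with the unique extension $\sst_m$ across $N$ (from the surgery characterization recalled in Section~\ref{sec:spinc}) produces a \spinc{} structure on $X_e$. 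Hence $\sss_m$ extends over $W$ if and only if some characteristic class $r_1L+r_2M$ restricts on $N$ to the structure $\sst_m$.

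Second, I would identify which values of $m$ this condition selects. By the characterization of $\sst_m$, the restriction of $\sst$ to $N$ is determined by the pairing with the core $[C]=aL+bM$:
$$\langle c_1(\sst),[C]\rangle=(r_1L+r_2M)\cdot(aL+bM)=r_1b+r_2a+r_2be=:k.$$
Comparing with the defining relation $\langle c_1(\sst_m),[C]\rangle=d-2m$ shows that $\sst=\sst_m$ on $N$ precisely when $m=(d-k)/2$. This yields the extension criterion stated in the lemma.

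Third, I would compute $c_1^2(\sst|_W)$. Under the restriction homomorphism $H^2(X_e)\to H^2(W)$, the classes $L$ and $M$ map to elements of $H^2(W;\Q)\cong\Q$ whose self- and mutual intersections were recorded in \eqref{eq:lsquare}. Expanding
$$c_1^2(\sst|_W)=r_1^2\,L^2+2\,r_1r_2\,L\cdot M+r_2^2\,M^2$$
and substituting \eqref{eq:lsquare} gives the asserted value $-(r_1b-r_2a)^2/d$.

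The only real obstacle is orientation bookkeeping: since $-Y=\partial N$ while $Y=\partial W$, one must verify that the restrictions of a global \spinc{} structure on $X_e$ from the two sides of $Y$ correspond to the same $\sss_m$ under the enumeration on $-Y$ that defines it. Once this sign-tracking is in place, the rest of the lemma reduces to the elementary bilinear calculations above.
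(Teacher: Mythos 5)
Your proposal is correct and follows essentially the same route as the paper: parametrize \spinc{} structures on $X_e$ by characteristic elements $r_1L+r_2M$, use the gluing/restriction argument to reduce extendability over $W$ to restriction from $X_e$, identify $m$ by pairing $c_1$ with $[C]$, and evaluate $c_1^2$ on $W$ via the intersection numbers of $L$ and $M$ in $H^2(W;\Q)$. One caveat on the last step: substituting \eqref{eq:lsquare} literally as printed (with $L\cdot M=-\frac{ab}{d}$) would give $-\frac{(r_1b+r_2a)^2}{d}$, whereas the lemma's value $-\frac{(r_1b-r_2a)^2}{d}$ requires $L\cdot M=+\frac{ab}{d}$ --- which is what the identification $L\mapsto -b'H^*$, $M\mapsto a'H^*$, $(H^*)^2=-c^2/d$ actually yields --- so the sign in \eqref{eq:lsquare} is a typo in the paper rather than a gap in your argument, but you should carry out the substitution explicitly rather than citing the displayed formula verbatim.
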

Notice that if $k$ can be presented in more than one way, it means that the extension of $\sss_m$ is not unique.

\subsection{Proof of Theorem \ref{THF}}
Let us now choose two integers $s_1$ and $s_2$. Set
\[r_1=2a+e-2-2s_1 \textrm{ and } r_2=2b-2-2s_2.\]
As above we write $k=r_1b+r_2(a+be)$ and $m=(d-k)/2$. We have
\[m+g=s_2b+s_1(a+be)+1.\]

Therefore, Lemma~\ref{lem:spinc} and Theorem~\ref{thm:os} together with computations of $d$--invariants in Proposition~\ref{prop:dinv} gives us

\begin{equation}\label{eq:rmg}
\frac{k^2-(r_1b-r_2a)^2}{8d}\le R(m+g)-m.
\end{equation}
After straightforward, but tedious computations, we obtain
\begin{equation}\label{eq:onRsecondtime}
R(s_1b+s_2(a+be)+1)\ge (s_1+1)(s_2+1)+\frac12s_2(s_2+1)e=:P(s_1,s_2).
\end{equation}

To conclude the proof we need to find the range for which \eqref{eq:onRsecondtime} holds. We have $m\in[-d/2,d/2)$, hence $m+g\in[g-d/2,g+d/2)$. By
Remark~\ref{rem:d2g} we have $d>2g$, so $g-d/2<0$ and $g+d/2>2g$. Thus $[0,2g]\subset[g-d/2,g+d/2]$.
The values of $R(k)$ for $k$ outside of $[0,2g]$ are well understood; see Corollary~\ref{cor:standardvaluesofR2}. 

\subsection{Alternative proof of Theorem~\ref{THF} via B\'ezout--like argument}\label{ss:bezout}
The following result is a direct generalisation of \cite[Proposition 2]{FLMN04}, and it provides an algebraic proof of Theorem~\ref{THF}.

\begin{theorem}\label{thm:generalsurface}
Let $X$ be a projective algebraic surface and $C$ a rational cuspidal curve on it with singular points $z_1,\ldots,z_n$. 
Let $\mathcal{L}$ be a line bundle on $X$ such that 
$t:=\langle c_1(\mathcal{L}),C\rangle>0$, the space of global sections $\Gamma(X,\mathcal{L})$ has positive dimension
and $\mathcal{L}$ has no section vanishing entirely on $C$. Then for the $R$--function as in Definition~\ref{def:Rfunction} we have
\[R(t+1)\ge \dim \Gamma(X,\mathcal{L}).\]
\end{theorem}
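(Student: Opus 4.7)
The plan is to reduce the statement to a dimension count on $\C P^1$. Let $\pi \colon \wt{C} \to C$ be the normalization; since $C$ is rational, $\wt{C} \cong \C P^1$, and set $\phi = \iota \circ \pi \colon \C P^1 \to X$ where $\iota \colon C \hookrightarrow X$ is the inclusion. Then $\phi^{*}\mathcal{L}$ has degree $t$ on $\C P^1$, so $\dim \Gamma(\C P^1,\phi^{*}\mathcal{L}) = t+1$. The hypothesis that no section of $\mathcal{L}$ vanishes identically on $C$ makes the pullback
\[
\phi^{*} \colon \Gamma(X,\mathcal{L}) \longrightarrow \Gamma(\C P^1,\phi^{*}\mathcal{L})
\]
injective; let $V$ denote its image. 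The essential local input is classical: for each cusp $z_j$ with unique preimage $\wt{z}_j \in \C P^1$, every nonzero $\tau = \phi^{*}\sigma \in V$ satisfies $\ord_{\wt{z}_j} \tau \in S_j$, because $S_j$ is by definition the set of orders of nonzero elements of $\mathcal{O}_{C,z_j}$ under the local parametrization of the cusp.

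Next I would run an iterated filtration argument. Fix a decomposition $t+1 = k_1 + \cdots + k_n$ with $k_j \ge 0$, and define the descending chain
\[
V = V^{(0)} \supseteq V^{(1)} \supseteq \cdots \supseteq V^{(n)}, \qquad V^{(j)} = \{\tau \in V : \ord_{\wt{z}_i} \tau \ge k_i \text{ for all } i \le j\}.
\]
On $V^{(j-1)}$ the order of vanishing $\ord_{\wt{z}_j}$ takes values in $S_j$, and its filtration has at most one-dimensional successive quotients (the usual ``leading coefficient'' linear functional); consequently the codimension of $V^{(j)}$ in $V^{(j-1)}$ is at most
\[
|S_j \cap [0,k_j)| = R_{S_j}(k_j).
\]

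To finish, any $\tau \in V^{(n)} \setminus \{0\}$ would satisfy $\sum_i \ord_{\wt{z}_i} \tau \ge \sum_i k_i = t+1$, contradicting the fact that the divisor of $\tau$ has total degree $t$ on $\C P^1$ and the points $\wt{z}_1,\dots,\wt{z}_n$ are distinct. Therefore $V^{(n)} = 0$, and telescoping yields
\[
\dim V \le \sum_{j=1}^{n} R_{S_j}(k_j).
\]
Taking the minimum over all admissible decompositions recovers $R(t+1)$ directly from the definition of the infimum convolution (allowing negative $k_j$ is harmless because each $R_{S_j}$ vanishes on non-positive arguments and is non-decreasing). The one genuinely delicate point is the codimension estimate at each step; the remainder is the bookkeeping that makes the infimum convolution emerge as the natural combinatorial way to distribute the total degree $t$ of $\phi^{*}\mathcal{L}$ among the $n$ cuspidal vanishing orders.
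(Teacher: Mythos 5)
Your argument is correct and takes essentially the same route as the paper's: your codimension estimate for the filtration by vanishing orders is exactly Lemma~\ref{lem:oncodim} (which the paper proves by the same one-dimensional-quotient induction through the semigroup elements), and your degree count on $\C P^1$ is the paper's B\'ezout step $D\cdot C=t$ rephrased on the normalization. The only cosmetic difference is that you pull back to $\wt{C}\cong\C P^1$ first, so that the intersection multiplicities $m_j(u)$ of the paper appear as vanishing orders $\ord_{\wt{z}_j}\tau$; the substance is identical.
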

\begin{proof}
Denote $U=\Gamma(X,\mathcal{L})$. For a section $u\in U$ we denote by $m_j(u)$ the intersection multiplicity of $u^{-1}(0)$ and $C$ at $z_j$.
If $u$ does not vanish at $z_j$, we set $m_j(u)=0$.

Given non-negative integers $p_1,\ldots,p_n$, set 
\[V=V(p_1,\ldots,p_n):=\{u\in U\colon \forall_j\, m_j(u)\ge p_j\},\]
Then $V$ is a linear subspace of $U$. 

\begin{lemma}\label{lem:oncodim}
We have
\[\codim V\le \sum \#S_i\cap[0,p_j),\]
where $S_j$ is the semigroup associated with the singular point $z_j$.
\end{lemma}

Given this result, we finish the proof of Theorem~\ref{thm:generalsurface} by contradiction. Namely fix $p>t$ and suppose $p_1,\ldots,p_n$ satisfy
$p_1+\ldots+p_n=p$, but $\sum\#S_j\cap[0,p_j)<\dim U$. By Lemma~\ref{lem:oncodim} the space $V(p_1,\ldots,p_n)$ has positive dimension,
hence there exists a section $u$ of $\mathcal{L}$ such that $D:=u^{-1}(0)$ intersects $C$ at points $z_j$ with multiplicity at least $p_j$.
Since $D\cdot C=\langle c_1(\mathcal{L}),C\rangle=t$ by assumption, we must have $C\subset D$. But this contradicts the assumption that
no section of $\mathcal{L}$ vanishes entirely on $C$. This implies that $\sum\#S_j\cap[0,p_j)\ge\dim U$. In notation from 
Section~\ref{sec:semigroups} we write this as 
\[\sum R_j(p_j)\ge \dim U.\]  
Taking the infimum over all possible numbers $p_j$ satisfying $\sum p_j=p$ we infer that $R(p)\ge\dim U$.
This is exactly the statement of Theorem~\ref{thm:generalsurface}.
\end{proof}
\begin{proof}[Proof of Lemma~\ref{lem:oncodim}]
Notice that by definition of the semigroup we have $m_j(u)\in S_j$ for any $j$.
It follows that it is enough to show the lemma only for $p_1\in S_1,\ldots,p_n\in S_n$.
We proceed by induction on $\nu:=\sum\#S_j\cap[0,p_j)$. If $\nu=0$, it follows that $p_1=\ldots=p_n=0$, so there is nothing to prove.
Suppose the lemma holds for $p_1,\ldots,p_n$ and let $p_1'$ be the next element in $S_1$ after $p_1$ (the argument works
for arbitrary $z_j$; we fix $j=1$ for simplicity). If $\nu\ge \dim U-1$, 
then the statement of Lemma~\ref{lem:oncodim}
is that
\[\codim V(p_1',p_2,\ldots,p_n)\le \nu+1=\dim U,\]
which holds always if $V$ is not empty. So the only relevant case is $\nu<\dim U-1$. 
The condition $m_1(u)>p_1$ defines a subspace of $V(p_1,\ldots,p_n)$ of codimension at most $1$. But $m_1(u)>p_1$ implies that $m_1(u)\ge p_1'$. This
means that
\[\codim (V(p_1',p_2,\ldots,p_n)\subset V(p_1,\ldots,p_n))\le 1.\]
and the codimension of $V(p_1',p_2,\ldots,p_n)$ in $U$ is at most one greater than the codimension of $V(p_1,\ldots,p_n)$. Since 
$\# S_1\cap[0,p_1')=1+\# S_1\cap[0,p_1)$, the induction step is accomplished.
\end{proof}

\begin{example}\label{ex:notsogeneralsurface}
As an application of Theorem~\ref{thm:generalsurface} let us discuss the case where $X=X_e$ is a Hirzebruch surface and $C$ is of type $(a,b)$. 
Let us choose
$s_1>0$ $s_2\ge 0$ such that $s_1<a$, $s_2<b$. Let $\mathcal{L}=\mathcal{O}(s_1L+s_2M)$. It is clear that no section of $\mathcal{L}$ vanishes on $C$. Otherwise, the sheaf
$\mathcal{O}((s_1-a)L+(s_2-b)M)$ would have a section, but this is impossible, since the assumptions $s_1<a$, $s_2<b$ guarantee that
$\mathcal{O}((a-s_1)L+(b-s_2)M)$ is very ample; see 
\cite[Corollary~V.2.18]{Hart:1977}. We have
\[t=\langle c_1(\mathcal{L}),C\rangle=(s_1L+s_2M)\cdot (aL+bM)=s_1b+s_2(a+be).\]
As $s_1>0$, $s_2\ge 0$, we also have
\[\dim \Gamma(X_e,\mathcal{L})=\chi(\mathcal{L}).\]
The last quantity can be computed using the Riemann--Roch theorem. We obtain
\[\chi(\mathcal{L})=P(s_1,s_2).\]
Theorem~\ref{thm:generalsurface} now gives the same inequality as Theorem~\ref{THF}.
\end{example}

\begin{remark}
In Example~\ref{ex:notsogeneralsurface} the range of $s_1,s_2$ is slightly different than the range in Theorem~\ref{THF}. 
It is not hard to extend the range of $s_1,s_2$ using for instance \cite[Proposition~4.3.3]{COX2}, or formula in \cite[page 18]{MOEPHD} 
to compute $\dim \Gamma(X,\mathcal{L})$ as well as checking that
$\mathcal{L}$ does not admit a section vanishing entirely on $C$. We did not extend it in Example~\ref{ex:notsogeneralsurface} because it
is only an alternative proof of Theorem~\ref{THF}.
\end{remark}

\section{The link at infinity for Hirzebruch surfaces}\label{Spectra}

The main goal of Section~\ref{Spectra} and Section~\ref{S} is to establish Theorem~\ref{thm:spectrum}. We first pass to describing the link of a curve at infinity. 

Let $C$ be a curve of type $(a,b)$ in a Hirzebruch surface $X_e$.
Let $L$ be a vertical line and $M_0$ the special section. Then $X_e\setminus(L\cup M_0)=\mathbb{C}^2$. Let $N$ be a tubular neighbourhood of $L\cup M_0$.
Then $\partial N\cong S^3$ and the intersection of $C$ with $\partial N$ is a candidate for a link of $C$ at infinity. 
In our computations we assume for simplicity that $C$ intersects
both $L$ and $M_0$ transversally. Making the intersection of $C$ transverse to $L$ is simple: we can choose $L$ to be a transverse fibre and the set
of transverse fibres is open--dense. Yet in order to make $C$ transverse
to $M_0$ we might need to perturb $M_0$ in the \emph{smooth category}. This will not affect our reasoning, because the link at infinity will be shown to be
a topological invariant and the proof of the semicontinuity of the spectrum works even in the topological category.

In this approach we will follow the ideas of \cite{Neu-inf}.

\subsection{The link at infinity; a first glance in the plane}\label{sec:linkatinf1}
To illustrate our computations we begin with a rather standard example of computing the link at infinity of a quasi--homogeneous curve in $\C^2$.

So let $C$ be given by $x^p-y^q=0$ in $\C^2$ with $q>p$, $\gcd(p,q)=1$. Its link at infinity is clearly the torus knot $T(p,q)$. We will show an alternative
approach for computing this link, following essentially \cite{Neu-inf}.

First let us study the intersection of $C$ with the line at infinity. We choose coordinates $z=\frac{1}{x}$ and $u=\frac{y}{x}$ and the line at infinity
in these coordinates is given by $z=0$. The equation $x^p-y^q=0$ transforms into 
\[\frac{1}{z^p}-\frac{u^q}{z^q}=0,\]
that is
\[z^{q-p}-u^q=0.\]
Take a small ball  $B$ with a centre at $(z,u)=(0,0)$ and let $S=\partial B$. 
The line at infinity intersects $S$ along an unknot, while $C$ intersects $S$ along the torus knot $T(q,q-p)$,
that is the link of the singularity on the curve $z^{q-p}-u^q=0$; see Figure~\ref{fig:neumann}(left). The unknot and the torus knot are presented 
schematically in Figure~\ref{fig:schematic1}.
This torus knot has linking number $q$ with the unknot, corresponding to the fact that the intersection number of $C$ with $L$ is equal to $q$.

Adding a two--handle to $B$ along the unknot with framing $1$ yields the tubular neighbourhood of the line at infinity; see Figure~\ref{fig:neumann}(right),
the framing is exactly the self--intersection of the line at infinity.
The boundary of this tubular neighbourhood
is a large sphere in $\C^2$, that is, the complement of the line of infinity seen from outside. On the other hand, this sphere is also the result of a $+1$ surgery on the unknot.

\begin{figure}
\begin{pspicture}(-5,-3.8)(5,1)
\rput(-3,0){%
\pscircle[linestyle=dashed,fillstyle=solid,opacity=0.2,fillcolor=gray](0,0){0.6}\rput(0,0.8){\psscalebox{0.7}{$S$}}
\pscircle[linestyle=solid,linewidth=1.5pt](0,-1.8){1.8}\rput(1.5,-1.8){\psscalebox{0.7}{$L_\infty$}}
\psbezier(-1,-1)(-0.2,-1)(0,-0.4)(0,0)
\psbezier(0,0)(0,-0.4)(0.2,-1)(1,-1)\rput(1,-0.8){\psscalebox{0.7}{$C$}}}
\rput(3,0){%
\pscircle[linestyle=dotted,fillstyle=solid,opacity=0.2,fillcolor=gray](0,-1.8){2.1}
\pscircle[linestyle=dotted,fillstyle=solid,opacity=1.0,fillcolor=white](0,-1.8){1.5}
\pscircle[linestyle=solid,linewidth=1.5pt](0,-1.8){1.8}
\psbezier(-1,-1)(-0.2,-1)(0,-0.4)(0,0)
\psbezier(0,0)(0,-0.4)(0.2,-1)(1,-1)\rput(1,-1.2){\psscalebox{0.7}{$C$}}
}
\end{pspicture}
\caption{Left: The line at infinity $L_\infty$ and the link of the singularity of $C$ at infinity. A similar picture appears in \cite[page 462]{Neu-inf}. Right: Adding a two--handle to the neighbourhood of the point at infinity yields a tubular neighbourhood of the line at infinity. Its boundary is a large sphere in $\C^2$.}\label{fig:neumann} 
\end{figure}
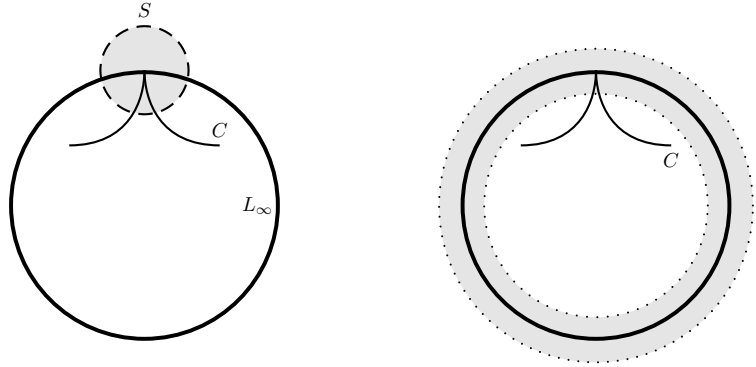

\begin{figure}
\begin{pspicture}(-5,-2)(5,2)
\necklacetemplate{1}
\pscircle[linewidth=1.5pt](0,0){\necklaceradius}\crosslink{$T(q-p,q)$}
\end{pspicture}
\caption{A schematic presentation of the link from 
Figure~\ref{fig:neumann} (left part). The unknot is the intersection of the line at infinity with $S$, while the torus knot, represented
here by a segment, is the intersection of $C$ with $S$.}\label{fig:schematic1}
\end{figure}
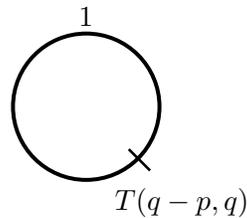

The $+1$ surgery on the unknot, that is, blowing down the unknot with framing $+1$ means that the torus knot $T(q-p,q)$ acquires a negative Dehn
twist along the longitude, so it becomes the torus knot $T(-p,q)$ in the large sphere. This sphere is seen from outside, so we need to reverse the orientation
obtaining the link $T(p,q)$. This is the link of $C$ at infinity.

\subsection{Plumbing at infinity and the Nagata transform}

On a Hirzebruch surface we need to find an analogue to the notion of a line at infinity. We choose one particular fibre $L$ and a horizontal section $M'$,
which is a smooth rational curve isotopic to the special section, for example we can take $M'$ to be a smooth perturbation of the special section $M_0$.
We have $L^2=0$, $L\cdot M'=1$ and $M'^2=-e$. Consider $N$, the tubular neighbourhood of $L\cup M'$ and $Y=\p N$. 
The complement $X_e\setminus N$ is a four--ball, and $\partial N$ is a three--sphere. As a boundary of a neighbourhood of $L\cup M'$,
$N$ is a plumbed 4--manifold
with a plumbing graph as depicted in Figure~\ref{fig:plumbing1}(left).
It will be more
convenient to use a surgery description of a plumbed manifold as on the right hand side of Figure~\ref{fig:plumbing1}.

There exists a birational map between $X_e$ and $X_{e-1}$, which is a sequence of one $(-1)$ blow--up and $(-1)$ blow--down. 
In algebraic geometry, this birational map is called the \emph{Nagata transform} (sometimes referred to as an \emph{elementary transformation}); see Figure~\ref{fig:nagata} for an illustration.
Successive Nagata transforms of the Hirzebruch surface $X_e$ yields, after a finite number of steps, the situation in Figure~\ref{fig:laststage}.
Contracting the $-1$ curve yields an unknot with framing $1$ 
(this corresponds to the well--known fact that $X_1$ is $\C P^2$ blown up in one point). Contracting this 
unknot we obtain $S^3$.

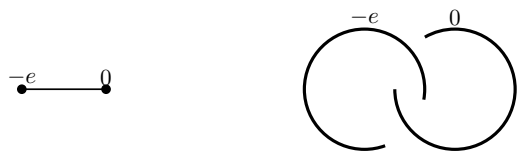
\begin{figure}

\begin{pspicture}(-5,-1)(5,1)
\rput(-2,0){\psscalebox{0.8}{
\psline(-0.7,0)(0.7,0)
\pscircle[fillcolor=black,fillstyle=solid](-0.7,0){0.08}
\pscircle[fillcolor=black,fillstyle=solid](0.7,0){0.08}
\rput(0.7,0.2){$0$}
\rput(-0.7,0.2){$-e$}}}
\rput(2,0){\psscalebox{0.8}{\blackdoublenecklace{-e}{0}}}
\end{pspicture}
\caption{The plumbing diagram of the regular neighbourhood of a 'line at infinity' of a Hirzebruch surface and the corresponding surgery presentation
of the three sphere}\label{fig:plumbing1}
\end{figure}

\begin{figure}
\begin{pspicture}(-7,-1.0)(7,1.72)
\rput(-5.5,0){\psscalebox{0.8}{\contractleftnecklace{-e}{0}}}
\rput(0.2,0){\psscalebox{0.8}{\colortriplenecklace{-e}{-1}{-1}}}
\rput(5,0){\psscalebox{0.8}{\contractmiddlenecklace{-(e-1)}{0}}}
\psline{->}(2.7,0)(4.0,0) 
\psline{->}(-2.0,0)(-3.2,0)
\end{pspicture}
\caption{Nagata transform at the level of links at infinity. The surgery presentation on the left corresponds to $X_e$, it is blown up to the diagram
at the center and then the middle component is contracted, yielding a presentation corresponding to $X_{e-1}$.}\label{fig:nagata}
\end{figure}
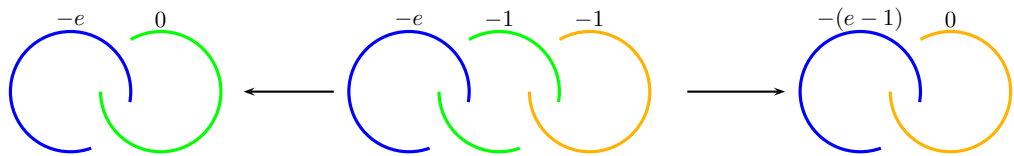

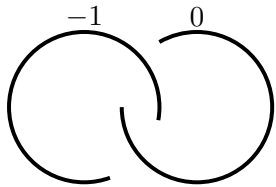
\begin{figure}\begin{pspicture}(-5,-1.3)(5,1.5)
\blackdoublenecklace{-1}{0}
\end{pspicture}
\caption{Last stage of the Nagata transform}\label{fig:laststage}
\end{figure}

\subsection{The link at infinity of curves in Hirzebruch surfaces}\label{s:linkhirz}

Now we want to use the Nagata transform to describe the link at infinity of a given curve in a Hirzebruch surface. For simplicity we restrict to the case when $C$ is a type $(a,b)$ curve
in $X_e$ intersecting $L$ and $M'$ transversally. By definition $C\sim aL+bM=aL+b(M_0+eL)$, hence $C\cdot L=b$ and $C\cdot M'=C\cap M_0=a+be-be=a$.
Let $z_1,\ldots,z_b=C\cap L$ and $w_1,\ldots,w_a=C\cap M'$. Choose a ball $B$ lying in a small tubular neighbourhood of $L\cap M'$ and containing all
the points $z_1,\ldots,z_b$, $w_1,\ldots,w_a$ and $L\cap M'$ as in Figure~\ref{fig:neumannhirzebruch}.
\begin{figure}
\begin{pspicture}(-5,-2)(5,1.5)
\psccurve[fillcolor=gray,opacity=0.1,fillstyle=solid,linestyle=dashed](-1.5,-1.8)(-1.2,-1.5)(-1.1,-0.7)(0.0,-0.3)(2.0,-0.2)(2.3,0.0)(2.0,0.2)(-1.1,0.4)(-1.8,0.35)(-1.9,0.0)(-1.8,-0.7)(-1.8,-1.4)\rput(-0.5,0.6){\psscalebox{0.8}{$S$}}
\psline(-3,0)(3,0)\rput(2.8,0.2){\psscalebox{0.7}{$M'$}}
\psline(-1.5,-2)(-1.5,1)\rput(-1.3,0.8){\psscalebox{0.7}{$L$}}
\psarc(-1,0){2}{340}{20}
\psarc(-1.5,0){2}{340}{20}
\psarc(-0.5,0){2}{340}{20}
\pscircle[fillcolor=black,fillstyle=solid](1,0){0.07}\rput(1.2,0.2){\psscalebox{0.7}{$w_2$}}
\pscircle[fillcolor=black,fillstyle=solid](1.5,0){0.07}\rput(1.7,0.2){\psscalebox{0.7}{$w_3$}}
\pscircle[fillcolor=black,fillstyle=solid](0.5,0){0.07}\rput(0.7,0.2){\psscalebox{0.7}{$w_1$}}
\psarc(-1.5,1.5){3}{255}{285}
\psarc(-1.5,2){3}{255}{285}
\pscircle[fillcolor=black,fillstyle=solid](-1.5,-1.5){0.07}\rput(-1.3,-1.3){\psscalebox{0.7}{$z_2$}}
\pscircle[fillcolor=black,fillstyle=solid](-1.5,-1.0){0.07}\rput(-1.3,-0.8){\psscalebox{0.7}{$z_1$}}
\pscircle[fillcolor=black,fillstyle=solid](-1.5,-0.0){0.07}
\end{pspicture}
\caption{Figure~\ref{fig:neumann} revisited. This time the ball $B$ (the shaded region) captures all the intersection points of $C$ with $L\cup M'$, as well as $L\cap M'$.%
}\label{fig:neumannhirzebruch}
\end{figure}
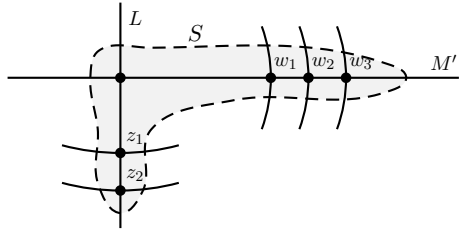
Let $S=\partial B$. The intersection of $L\cup M'$ with $S$ forms a Hopf link. On gluing two--handles to $B$ along this link, with framings respectively $L^2$
and ${M'}^2$ we obtain a tubular neighbourhood of $L\cup M'$. The boundary of this tubular neighbourhood is a large sphere in $\C^2=X_e\setminus(L\cup M')$
seen from outside. This large sphere is thus the effect of a $(0,-e)$ surgery on the Hopf link; compare with Figure~\ref{fig:plumbing1}.

The intersection $C\cap S$ consists of $a+b$ components, corresponding to points $z_1,\ldots,z_b$ and $w_1,\ldots,w_a$. Since $C$ is smooth at all these
points and it is transverse to $L\cup M'$, 
each component of the intersection is an unknot. The unknots corresponding to $z_1,\ldots,z_b$ have linking number $1$ with the link $L\cap S$,
while the unknots corresponding to $w_1,\ldots,w_a$ have linking number $1$ with $M'\cap S$. To be consistent with Section~\ref{sec:linkatinf1},
we remark, that the first set of meridians forms the torus link $T(0,b)$, while the second set forms the torus link $T(0,a)$.
We present these torus links schematically in Figure~\ref{fig:notation}, where we introduced also integer parameters $x$ and $y$. At the beginning we have $x=y=0$,
but later we will have to allow that $x,y\neq 0$.

Our aim is to see these two torus links in a standard sphere, that is to contract the framed Hopf link
in Figure~\ref{fig:plumbing1}(right). This is performed inductively and the induction step is the Nagata transform. We will
explain now, how the two torus links in Figure~\ref{fig:notation} change under the Nagata transform. 

Start with the intermediate stage of the Nagata transform; in Figure~\ref{fig:nagata2} it is the middle link.
Blowing down either the middle circle or the right circle in this link (this corresponds to going to the right link and to the left link, respectively)
does not affect $T(ax,a)$. If the middle circle is blown down, the link $T(by,b)$ also remains unchanged.
However, if we blow down the right circle,
the meridians of the right circle get an additional twist, as well as a clasp with the middle
circle, so that $T(by,b)$ becomes $T(b(y+1),b)$. The Nagata transform corresponds to going from the left link to the right link in Figure~\ref{fig:nagata2},
so that $T(b(y+1),y)$ becomes $T(by,b)$, $T(ax,a)$ is unchanged and the framing of the left circle is changed from $-e$ to $-(e-1)$.

\begin{figure}
\begin{pspicture}(-5,-1.5)(5,1.5)
\blackdoublecnecklace{-e}{0}{$T(ax,a)$}{$T(by,b)$}
\end{pspicture}
\caption{The link consisting of $a$ meridians going around $-e$ framed circle with $x$ twists and $b$ meridians of the $0$ framed circle with $y$ full twists.}\label{fig:notation}
\end{figure}
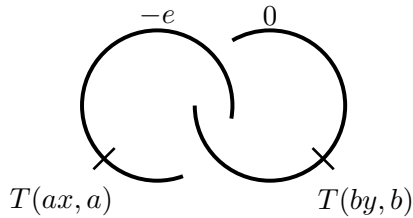

\begin{figure}\begin{pspicture}(-7,-1)(7,1)
\rput(0.2,0){\psscalebox{0.7}{\colortriplecnecklace{-e}{-1}{-1}{$T(ax,a)$}{$T(by,b)$}}}
\rput(-5.5,0){\psscalebox{0.7}{\contractleftcnecklace{-e}{0}{$T(ax,a)$}{$T(b(y+1),b)$}}}
\rput(5,0){\psscalebox{0.7}{\contractmiddlecnecklace{-(e-1)}{0}{$T(ax,a)$}{$T(by,b)$}}}
\psline[linecolor=orange]{->}(2.7,0)(4.0,0)
\psline[linecolor=green]{->}(-2,0)(-3.2,0)
\end{pspicture}
\caption{The Nagata transform and its effect on the link consisting of $a$ meridians of the $-e$ framed circle and $b$ meridians
of the $0$ framed circle.}\label{fig:nagata2}
\end{figure}
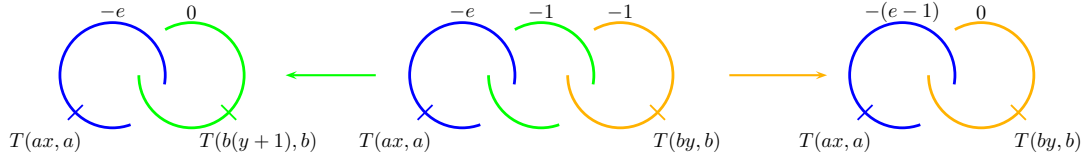

After $e-1$ subsequent Nagata transforms, the framing of the left circle becomes $-1$. We blow down the circle obtaining a link, with only one circle (with
framing $+1$)
and torus links $T(a,a)$ and $T((1-e)b,b)$ linked to it; see Figure~\ref{fig:stage2}.

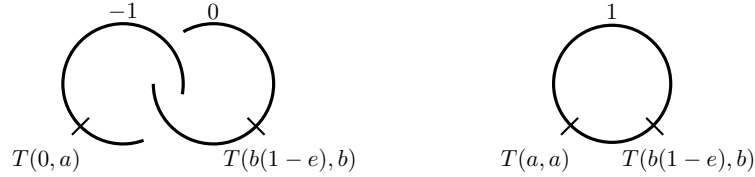
\begin{figure}\begin{pspicture}(-5,-1)(5,1)
\rput(-2.5,0){\psscalebox{0.8}{\blackdoublecnecklace{-1}{0}{$T(0,a)$}{$T(b(1-e),b)$}}}
\rput(4.0,0){%
\psscalebox{0.8}{\necklacetemplate{$1$}
\pscircle(0,0){\necklaceradius}
\crosslink{$T(b(1-e),b)$}
\othercrosslink{$T(a,a)$}}}
\end{pspicture}
\caption{On the left: the link of $C$ at infinity after $e-1$ Nagata transforms. On the right, this link after blowing down the $-1$ curve.}\label{fig:stage2}
\end{figure}

Blowing down the circle with framing $+1$ on the right of Figure~\ref{fig:stage2} means that the two torus links will acquire a negative twist. That
is, the $T(a,a)$ torus link will become the $T(0,a)$ torus link again, while $T(b(1-e),b)$ will become the $T(-be,b)$ torus link. The two torus links are now
clasped: each of the components of $T(0,a)$ with each of the components of $T(-be,b)$ will form a negative Hopf link.
As in the case of the link at infinity of a curve in $\C P^2$, we have to reverse the orientation. In this way the $T(-be,b)$ will become $T(be,e)$, while $T(0,a)$ will remain
$T(0,a)$. The negative Hopf link will become the positive Hopf link. The final result is shown in Figure~\ref{fig:link}.

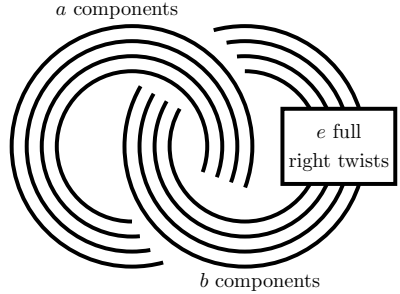
\begin{figure}
\begin{pspicture}(-5,-2)(5,2)
\psset{linewidth=1.5pt}
\psarc(-1,0){1}{340}{270}
\psarc(-1,0){1.2}{340}{275}
\psarc(-1,0){1.4}{340}{280}
\psarc(-1,0){1.6}{340}{285}
\rput(-1.2,1.8){\psscalebox{0.7}{$a$ components}}
\psarc(0.5,0){1}{150}{90}
\psarc(0.5,0){1.2}{150}{95}
\psarc(0.5,0){1.4}{150}{100}
\psarc(0.5,0){1.6}{150}{105}
\pspolygon[fillstyle=solid,fillcolor=white,linecolor=black](1,-0.5)(2.5,-0.5)(2.5,0.5)(1,0.5)
\rput(1.75,0.2){\psscalebox{0.7}{$e$ full}}
\rput(1.75,-0.2){\psscalebox{0.7}{right twists}}
\rput(0.7,-1.8){\psscalebox{0.7}{$b$ components}}
\end{pspicture}
\caption{The link at infinity of a curve of degree $(a,b)$.}\label{fig:link}
\end{figure}

\begin{definition}
The link in Figure~\ref{fig:link} is called the \emph{link at infinity} of a curve of degree $(a,b)$. We denote this link by $L_{a,b}$.
\end{definition}

\section{Hermitian Variation Structure of the link at infinity}\label{S}

In this section we determine the spectrum of the curve $C$ at infinity using the language of Hermitian Variation Structures for links as introduced in \cite{BN-hodge}.
Hermitian Variation Structure is encoded in so called Hodge numbers $p^k_\lambda(u)$, where $k=1,2,\ldots,$, with $\lambda\in S^1$ and $u\in \{\pm 1\}$. Here $\lambda$ corresponds
to roots of the Alexander polynomial of the link; and since 
the Alexander polynomial has roots only at the unit circle, the Hodge numbers $q^k_\lambda$ for $|\lambda|<1$
are not present. In the following we will gather enough data about $p^k_\lambda(u)$ to be able to compute the spectrum of the link at infinity. We will do this in four steps. First we will compute the Alexander polynomial of $L_{a,b}$ and show that the $p^k_\lambda(u)$ vanish for $k>2$ and for $k=2$ if $\lambda=1$.
Second, we compute the equivariant signatures of the link at infinity. Third, we use the equivariant signatures and the Alexander polynomial to gather enough conditions on the Hodge numbers  
to recover the non--integer part of the spectrum. In the last step the Hodge numbers for
$\lambda=1$ will be determined from the linking matrix of the components of $L_{a,b}$ as in \cite[Section 3]{BN-spec}. 

The proof of Theorem~\ref{thm:spectrum} is presented at the end of this section.

\subsection{Splice presentation of $L_{a,b}$ and its first consequences}
After a series of observations about the link $L_{a,b}$, our first result on the Hodge numbers is given in Lemma~\ref{lem:monod}.

We begin with the following observation.
\begin{lemma}
The link at infinity $L_{a,b}$ is a graph link. Its Eisenbud--Neumann diagram is as on Figure~\ref{fig:linka1}.
\end{lemma}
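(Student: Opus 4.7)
The plan is to extract the Eisenbud-Neumann splice diagram directly from the surgery picture built up in Section~\ref{s:linkhirz}. The first assertion, that $L_{a,b}$ is a graph link, will come essentially for free: at every stage of our construction the link lies in a plumbed $4$-manifold whose boundary is a graph manifold, and the complement of the link is naturally assembled from Seifert-fibered pieces. Thus the real content of the lemma is pinning down the specific diagram of Figure~\ref{fig:linka1}.

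The main step is to apply the standard plumbing-to-splice dictionary (see \cite{Neu-inf}) to Figure~\ref{fig:link}. The $a$ meridians group naturally around the hub that came from $M'$, and the $b$ meridians around the hub that came from $L$; these two hubs (now blown down) become two interior nodes of the splice diagram, carrying $a$ and $b$ arrowheads respectively. The $e$ full right twists linking the two bundles of meridians, together with the framings accumulated during the chain of Nagata transforms leading to Figure~\ref{fig:link}, translate into numerical weights on the single edge connecting the two nodes and on the leaf edges. To confirm the outcome I would compute linking numbers in two ways: directly from the surgery picture (two meridians on the same side have linking number $0$, two on opposite sides link $e$ times) and from the proposed diagram via the Eisenbud-Neumann product-over-paths formula. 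Agreement of the two calculations should fix the weights uniquely.

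The main obstacle is careful bookkeeping of Seifert invariants across the $e-1$ Nagata transforms and across the orientation reversal in the final step that passes from the tubular neighbourhood of $L \cup M'$ to its exterior. While each individual move is a routine operation in the splice calculus, one has to propagate signs and continued-fraction data coherently through all intermediate blow-up/blow-down pairs and the concluding $+1$ blow-down in order to land on the diagram claimed in Figure~\ref{fig:linka1}.
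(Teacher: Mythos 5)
Your overall strategy -- read the splice diagram off the final picture of $L_{a,b}$ and pin down the weights by matching linking numbers via the Eisenbud--Neumann product-over-paths formula -- is the natural one; the paper in fact offers no proof of this lemma at all, treating it as an observation, so there is no competing argument to measure you against. The assertion that $L_{a,b}$ is a graph link is indeed essentially free once one notes that the two bundles of meridians are unions of fibres of Seifert fibrations of the two solid tori on either side of the Hopf torus, so the link exterior is glued from Seifert pieces (being a link \emph{in} a graph manifold is not by itself enough, but that is a quibble here).

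However, your verification step is based on incorrect linking numbers, and since the whole point of the lemma is to get the weights in Figure~\ref{fig:linka1} right, this is a genuine error rather than a cosmetic one. From Figure~\ref{fig:link} (and Section~\ref{s:linkhirz}): the $a$ components coming from $T(0,a)$ are pairwise unlinked; the $b$ components coming from $T(be,b)$ have pairwise linking number $e$ (the $e$ full right twists sit \emph{inside} the $b$-strand bundle, not between the two bundles); and a component from one bundle links a component from the other exactly once, via the (positive, after the orientation reversal) Hopf clasp. This last value is confirmed later in the paper, where $\lk(L^1_i,L^2_j)=1$ is used in the proof that $p^1_1(-1)=0$. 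You instead assert that any two meridians on the same side link $0$ and that meridians on opposite sides link $e$ times; carried through the EN linking formula this would place the weight $e$ on the wrong edge and miss the internal $e$-twisting of the $b$-bundle, yielding a diagram other than Figure~\ref{fig:linka1}. Relatedly, by the stage of Figure~\ref{fig:link} all surgery circles have been blown down, so there are no residual framings to feed into the edge weights: the weights $0$, $e$ and $1$ come from the Seifert data of the two solid tori and the Hopf splice, which is exactly what the corrected linking numbers encode.
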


We denote by $L^1$ and $L^2$ the splice components of $L_{a,b}$. On Figure~\ref{fig:linka1} the component $L^1$ is presented on the left, while the
component $L^2$ is on the right. For these components we have the following estimates.

\begin{lemma}
The multiplicity of the multilink $L^1$ is equal to $b$, while the multiplicity of $L^2$ is equal to $w=a+be$.
\end{lemma}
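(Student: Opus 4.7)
My plan is to read the multiplicities directly off the Eisenbud--Neumann splice diagram of $L_{a,b}$ displayed in Figure~\ref{fig:linka1}. In the Eisenbud--Neumann formalism, the multiplicity of each splice component of a multilink is determined by combinatorial data attached to the corresponding node of the diagram, and it can be computed as a sum of products of edge weights and arrow multiplicities.

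First I would make the JSJ decomposition of $S^3\setminus L_{a,b}$ explicit. By the construction in Section~\ref{s:linkhirz}, the JSJ torus separates the $a$ components coming from $C\cap M'$ (these form $L^1$ and appear on the left of Figure~\ref{fig:link}) from the $b$ components coming from $C\cap L$, arranged with $e$ full twists (these form $L^2$). Each of the $a+b$ components carries arrow multiplicity one, as $C$ is transverse to $L\cup M'$, and these arrow multiplicities will be the only non-trivial input into the formula besides the edge weights.

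Next I would extract the edge weights from the surgery presentation. Tracking the linking structure back through the $e-1$ Nagata transforms, the splice edge at the $L^1$ node should carry weight $b$, reflecting the linking of the $L^1$ splice axis with the $b$-component sublink on the other side; the splice edge at the $L^2$ node should carry weight $a+be=w$, where the $a$ records the direct linking with the $a$ meridians of $M'$ and the $be$ records the additional twisting built up by the Nagata transforms on the $L^2$ side. Evaluating the Eisenbud--Neumann multiplicity formula at each node will then produce $b$ and $w$ as claimed.

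The main technical hurdle is the careful bookkeeping of framings and linking numbers through the sequence of Nagata transforms, since the extra $be$ in the $L^2$ weight easily gets miscounted if one confuses $M'$ with the homology class $M=M_0+eL$. A cleaner organising principle that I would adopt is the homological interpretation: the multiplicity of each splice component at the link at infinity coincides with the intersection number of $C$ with the corresponding cycle in $X_e$. Under this principle, the multiplicity of $L^1$ equals $C\cdot L=b$ and the multiplicity of $L^2$ equals $C\cdot M=a+be=w$, yielding both equalities at once.
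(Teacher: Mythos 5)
Your overall route is the same as the paper's: the proof in the paper is a one‑line appeal to ``straightforward computations'' with the splice diagram of Figure~\ref{fig:linka1}, i.e.\ exactly the Eisenbud--Neumann node‑multiplicity formula you invoke, and the data it needs are spelled out later in the proof of Lemma~\ref{lem:sign} (unit arrowheads for the link components, one virtual arrowhead of multiplicity $a$ resp.\ $b$ created by splicing, and Seifert weights $1,\ldots,1,e$ on one side, $1,\ldots,1,0$ on the other). Your closing homological principle is a genuinely nice addition that is not in the paper: the node multiplicity equals the linking number of the generic Seifert fibre of that piece with the multilink, the two generic fibres are the meridians $\mu_{M'}$ and $\mu_L$, and capping the Seifert surface of such a meridian off inside the tubular neighbourhood $N$ produces closed surfaces representing the classes $L$ and $M$ respectively, whence the two multiplicities are $C\cdot L=b$ and $C\cdot M=w$. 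This is correct and arguably cleaner than the diagram bookkeeping, but to count as a proof you should record those two facts explicitly rather than assert the principle.

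There is one concrete misstep in your middle paragraph: you claim the splice edge carries weight $b$ at one node and $a+be$ at the other. That is not what the diagram shows, and feeding those numbers into the multiplicity formula would give the wrong answer. The weights on the splice edge are $0$ and $e$ (inherited from the self‑intersections in the plumbing), while $a$ and $b$ enter instead as the multiplicities of the two virtual arrowheads created by splicing (the labels $(a)$ and $(b)$ in Figure~\ref{fig:linka1}). With the correct inputs the formula gives $a\cdot 1+b\cdot e=w$ at the node carrying the $b$ meridians of $L$, and $b\cdot 1+a\cdot 0=b$ at the node carrying the $a$ meridians of $M'$, in agreement with your homological computation. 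Be aware also that the paper itself is inconsistent about which splice component is called $L^1$: the present lemma and Figure~\ref{fig:linka1} assign multiplicity $b$ to $L^1$, whereas the proof of Lemma~\ref{lem:sign} computes the multiplicity of its ``$L^1$'' to be $w$; your labelling matches the lemma as stated.
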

\begin{proof}
The results follow from straightforward computations; see for example \cite{Neu-other}.
\end{proof}

Then we make the following observation about $L_{a,b}$.
\begin{corollary}
The link $L_{a,b}$ is a fibred link.
\end{corollary}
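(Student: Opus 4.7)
The plan is to derive fibredness of $L_{a,b}$ from its description as a graph multilink via the Eisenbud--Neumann splicing theorem for fibrations. The preceding observations tell us that $L_{a,b}$ is a graph link whose splice diagram (Figure~\ref{fig:linka1}) is the join of two Seifert (torus) multilinks $L^1$ and $L^2$, with splice--edge multiplicities $b$ and $w=a+be$ respectively. All remaining components arise as transverse intersections of $C$ with either $L$ or $M'$, so each of them carries multiplicity $1$ in the multilink structure.

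First I would quote the criterion of Eisenbud and Neumann: a graph multilink in $S^3$ is fibered if and only if every splice component is fibered as a multilink in its ambient Seifert fibered piece, in which case the splice of two fibered multilinks along a common fibered knot is again fibered. It then suffices to check that $L^1$ and $L^2$ are each fibered in their respective solid tori. Since both are Seifert multilinks carrying only strictly positive multiplicities ($1$'s on the boundary components, together with $b$ or $w$ on the splicing edge), the standard combinatorial criterion for Seifert multilinks, namely non-vanishing of the relevant weighted multiplicity sums at each node, is immediately satisfied.

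The main obstacle is citing the Eisenbud--Neumann criterion in a form directly applicable to Figure~\ref{fig:linka1} and verifying the weight conditions at both splice nodes without ambiguity in sign conventions. A more transparent alternative I would also record is to note that the construction of $L_{a,b}$ via the sequence of Nagata transforms followed by a final blow--down of a $(-1)$--curve realises it, up to isotopy, as the actual link at infinity of a reduced algebraic curve in $\C P^2\setminus L_\infty=\C^2$. The classical Milnor--type fibration $f/|f|\colon S^3_R\setminus L_{a,b}\to S^1$, for $f$ a defining polynomial and $R$ sufficiently large, then exhibits $L_{a,b}$ as a fibered link, bypassing the combinatorial criterion altogether.
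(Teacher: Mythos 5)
Your main argument is essentially the paper's: the authors also invoke the Eisenbud--Neumann fibration criterion for graph multilinks (\cite[Theorem 11.2]{EN}), the hypothesis being exactly that the node multiplicities are nonzero, which the preceding lemma supplies as $b>0$ and $w=a+be>0$. However, I would drop or heavily qualify your ``more transparent alternative'': for a reduced curve $\{f=0\}$ in $\C^2$ the map $f/|f|$ on a large sphere is \emph{not} automatically a fibration --- this holds for regular links at infinity (in Neumann's sense) but can fail for atypical fibres, so asserting it here either presupposes the fibredness you are trying to prove or requires a regularity-at-infinity argument you have not given; moreover, identifying $L_{a,b}$ with an honest algebraic link at infinity in $\C^2$ via the Nagata transforms (which are only birational) would itself need justification.
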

\begin{proof}
Because the multiplicity of every node is nonnegative, this follows from \cite[Theorem 11.2]{EN}.
\end{proof}

Moreover, \cite[Theorem 11.3]{EN} allows us to explicitly compute the Alexander polynomial of $L_{a,b}$.
\begin{lemma}\label{lem:Alexander}
The Alexander polynomial of $L_{a,b}$ is equal to
\[\Delta(t)=(t-1)(t^w-1)^{b-1}(t^b-1)^{a-1}.\]
\end{lemma}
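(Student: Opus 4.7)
The proof is essentially a bookkeeping exercise: once the splice diagram of $L_{a,b}$ is in hand (as asserted in the previous lemma and depicted in Figure~\ref{fig:linka1}), the Alexander polynomial is produced by the Eisenbud--Neumann formula \cite[Theorem 11.3]{EN} as a product of standard factors, one from each node.

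My plan is first to read the combinatorial data off the splice diagram of $L_{a,b}$: it has exactly two nodes, one for each splice component $L^1$ and $L^2$, joined by a single splice edge, together with $a+b$ arrow-heads representing the link components. The node $v_1$ corresponding to $L^1$ carries $a$ arrow-heads plus the splice edge, giving valency $\delta_{v_1}=a+1$; similarly $\delta_{v_2}=b+1$.

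Next, I specialise the multivariable Alexander polynomial to a single variable by setting each leaf variable equal to $t$. Under this specialisation, the quantity $\ell_v$ attached to each node becomes the linking multiplicity of that node, which by the preceding lemma equals $b$ at $v_1$ and $w=a+be$ at $v_2$. Substituting into EN's formula
\[
\Delta_{L_{a,b}}(t) \;=\; (t-1)\prod_{v \text{ node}}\bigl(t^{\ell_v}-1\bigr)^{\delta_v-2}
\]
then yields
\[
(t-1)(t^w-1)^{(b+1)-2}(t^b-1)^{(a+1)-2}=(t-1)(t^w-1)^{b-1}(t^b-1)^{a-1},
\]
as claimed. The prefactor $(t-1)$ reflects EN's normalisation for links with more than one component; as a consistency check, the order of vanishing at $t=1$ of the right-hand side is $1+(b-1)+(a-1)=a+b-1$, matching the expected $\mu-1$ for a $\mu$-component link.

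The main obstacle is purely conventional: one must verify that the multiplicities $b$ and $w$ established for $L^1$ and $L^2$ in the previous lemma are indeed the numbers $\ell_v$ that enter EN's product formula at the respective nodes, and that the $(t-1)$ prefactor is correctly accounted for. Both matchings follow directly once the definitions in \cite{EN} are unwound, but require careful attention so that no factor is misplaced between the two nodes.
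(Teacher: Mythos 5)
Your proposal is correct and is essentially the paper's own argument: the paper offers no further detail than citing \cite[Theorem 11.3]{EN} applied to the splice diagram of Figure~\ref{fig:linka1}, with the node multiplicities $b$ and $w$ from the preceding lemma, exactly as you do. Your pairing of multiplicity $w$ with the valency-$(b+1)$ node and multiplicity $b$ with the valency-$(a+1)$ node is the correct one (as your degree/order-of-vanishing checks confirm), so the bookkeeping goes through as claimed.
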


The fact that $L_{a,b}$ is a graph link affects the Hodge numbers in the following way.

\begin{lemma}[The monodromy theorem; see \expandafter{\cite[Section 13]{EN}}{} or \cite{Neu-other}]\label{lem:monod}
We have $p^k_\lambda(u)=0$ for $k>2$ and for $k=2$ and $\lambda=1$. Moreover the only positive values of $p^k_\lambda(u)$ can appear if $\lambda$ is
a root of unity of order $w$ or $a$.
\end{lemma}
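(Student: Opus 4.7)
The plan is to deduce all three claims from the Neumann monodromy theorem for graph links (see \cite[Section 13]{EN} and Neumann's later papers), combined with the splice description of $L_{a,b}$ already established. Since we have exhibited $L_{a,b}$ as a graph link with an explicit Eisenbud--Neumann splice diagram, the hypotheses of that theorem are in place, and the task is to unpack its conclusions in the language of Hermitian Variation Structures of \cite{BN-hodge}.

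First I would handle the bounds on Jordan block sizes. The geometric monodromy of a graph link fibration is quasi-unipotent, with Jordan blocks of size at most $2$ on the eigenspaces for $\lambda\neq 1$, and blocks of size exactly $1$ on the $\lambda=1$ eigenspace. Under the standard dictionary of \cite{BN-hodge}, a Jordan block of size $k$ at eigenvalue $\lambda$ contributes to some $p^k_\lambda(u)$, so these block-size bounds translate directly into the vanishing assertions $p^k_\lambda(u)=0$ for $k>2$ and $p^2_1(u)=0$.

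Next I would constrain the set of eigenvalues admitting positive Hodge numbers. Any $\lambda$ with $p^k_\lambda(u)>0$ must be a root of the characteristic polynomial of the monodromy, hence a root of the Alexander polynomial $\Delta(t)=(t-1)(t^w-1)^{b-1}(t^b-1)^{a-1}$ from Lemma~\ref{lem:Alexander}. Using the splice decomposition $L_{a,b}=L^1\cup L^2$ and the Eisenbud--Neumann factorisation of $\Delta$ as a product of node contributions, one attributes each nontrivial cyclotomic factor to one of the two splice components: the contribution associated to $L^2$ produces eigenvalues whose order divides $w$, while the contribution associated to $L^1$ produces eigenvalues whose order divides $a$. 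Combined with the separate treatment of $\lambda=1$ in the subsequent Hodge-number computation, this yields the stated restriction that positive Hodge numbers occur only at roots of unity of order $w$ or $a$.

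The main obstacle will be the splice bookkeeping in this last step: attributing each cyclotomic factor to the splice component that produced it, and verifying which root-of-unity orders survive as genuine eigenvalues of the monodromy on the fibre rather than being cancelled in the Alexander polynomial product. This is a careful but essentially standard application of the Eisenbud--Neumann formulas \cite{EN}, aided by the fact that the splice-node multiplicities (equal to $w$ on one side and related to $a$ on the other) control the cyclic groups acting on the corresponding pieces of the fibre cohomology.
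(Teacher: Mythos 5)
Your overall approach coincides with the paper's: the lemma is stated there with no argument beyond the citation of the monodromy theorem for graph links in [EN, Section 13], and your first paragraph (quasi-unipotent monodromy with Jordan blocks of size at most $2$ for $\lambda\neq1$ and trivial blocks at $\lambda=1$, translated via the dictionary of [BN-hodge] into the vanishing of $p^k_\lambda(u)$ for $k>2$ and of $p^2_1(u)$) is exactly the intended content of that citation.

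There is, however, a genuine inconsistency in your second paragraph. You correctly reduce the eigenvalue restriction to locating the roots of $\Delta(t)=(t-1)(t^w-1)^{b-1}(t^b-1)^{a-1}$ from Lemma~\ref{lem:Alexander}, but the roots of this polynomial are roots of unity of order dividing $w$ or dividing $b$: the exponent $a-1$ on the factor $(t^b-1)^{a-1}$ is a multiplicity, not an order. Your claim that the splice component $L^1$ contributes eigenvalues of order dividing $a$ therefore does not follow from the factorization you invoke; what governs the eigenvalue orders are the node multiplicities of the two splice components, namely $w$ and $b$, and these are what appear as exponents of $t$. The lemma as printed appears to contain the same slip ($a$ where $b$ is meant): in the paper's own example with $a=6$, $b=4$, $e=0$ one has $w=6$ and $\frac14$ occurs in the spectrum, so $\lambda=i$, of order $4=b$ (which is neither $a$ nor $w$), carries a positive Hodge number; likewise \eqref{eq:sumofhodge} and Table~\ref{table:one} are indexed by $p/w$ and $q/b$. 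You should derive and state the restriction as ``order dividing $w$ or $b$''; the version with $a$ that your bookkeeping aims at is not what the Eisenbud--Neumann formulas give, and is in fact false.
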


The Alexander polynomial is the characteristic polynomial of the monodromy $h$ acting on $H_1(F)$, where $F$
is the fibre of the fibration of the complement to $L_{a,b}$. By Lemma~\ref{lem:monod} the monodromy has Jordan block of size at most $2$. We can compute the Jordan
blocks of size $2$ from the characteristic polynomial of the monodromy operator restricted to the image $h^c-1$, where $c=\gcd(w,b)$.
The algorithm in \cite[Section 14]{EN} or \cite{Neu-other}  allows us to compute this as well. We obtain
\[\Delta_2(t)=\frac{t^c-1}{t-1}.\]
This polynomial affects the Hermitian Variation Structure related to the link $L_{a,b}$, but it does not affect its spectrum.

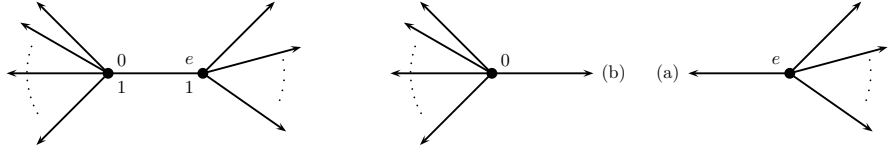
\begin{figure}
\begin{pspicture}(-5,-1)(5,2)
\rput(-3,0){\psscalebox{0.9}{%
\nodedot{(-0.7,0)}\nodedot{(0.7,0)}
\psline(-0.7,0)(0.7,0)
\psline{<-}(-2.2,0)(-0.7,0)
\rput(0.5,0.2){\psscalebox{0.7}{$e$}}
\rput(0.7,0){%
\rput{45}(0,0){\psline{->}(0,0)(1.5,0)}
\rput{15}(0,0){\psline{->}(0,0)(1.5,0)}
\rput{325}(0,0){\psline{->}(0,0)(1.5,0)}
\psarc[linestyle=dotted,fillstyle=none](0,0){1.2}{340}{10}}
\rput(-0.5,0.2){\psscalebox{0.7}{$0$}}
\rput(-0.5,-0.2){\psscalebox{0.7}{$1$}}
\rput(0.5,-0.2){\psscalebox{0.7}{$1$}}
\rput(-0.7,0){%
\rput{135}(0,0){\psline{->}(0,0)(1.5,0)}
\rput{150}(0,0){\psline{->}(0,0)(1.5,0)}
\rput{225}(0,0){\psline{->}(0,0)(1.5,0)}
\psarc[linestyle=dotted,fillstyle=none](0,0){1.2}{160}{210}}
}}
\rput(3,0){\psscalebox{0.9}{%
\nodedot{(-1.7,0)}\nodedot{(2.7,0)}
\psline{->}(-1.7,0)(-0.2,0)\rput(0.1,0){\psscalebox{0.7}{(b)}}
\psline{->}(2.7,0)(1.2,0)\rput(0.9,0){\psscalebox{0.7}{(a)}}
\psline{<-}(-3.2,0)(-1.7,0)
\rput(2.5,0.2){\psscalebox{0.7}{$e$}}
\rput(2.7,0){%
\rput{45}(0,0){\psline{->}(0,0)(1.5,0)}
\rput{15}(0,0){\psline{->}(0,0)(1.5,0)}
\rput{325}(0,0){\psline{->}(0,0)(1.5,0)}
\psarc[linestyle=dotted,fillstyle=none](0,0){1.2}{340}{10}}
\rput(-1.5,0.2){\psscalebox{0.7}{$0$}}
\rput(-1.7,0){%
\rput{135}(0,0){\psline{->}(0,0)(1.5,0)}
\rput{150}(0,0){\psline{->}(0,0)(1.5,0)}
\rput{225}(0,0){\psline{->}(0,0)(1.5,0)}
\psarc[linestyle=dotted,fillstyle=none](0,0){1.2}{160}{210}}
}}%
\end{pspicture}
\caption{The link $L_{a,b}$ and its splice components. There are $a$ arrowheads on the left and $b$ arrowheads on the right.}\label{fig:linka1}
\end{figure}

\subsection{The equivariant signatures of the link at infinity}
In this part we use Neumann's algorithm, see \cite{Neu-splice}, to compute the equivariant signatures of $L^1$ and $L^2$ for $\lambda\in S^1\setminus\{1\}$. The
equivariant signatures are additive, hence the signature of $L_{a,b}$ is the sum of the signatures for $L^1$ and $L^2$.

For $x\in(0,1)$ we denote by $\sigma^1_x$, respectively $\sigma^2_x$, the equivariant signature of $L^1$, respectively $L^2$, corresponding
to the value $\lambda=e^{2\pi ix}$.

The following lemma is the core result.
\begin{lemma}\label{lem:sign}
For $p=1,\ldots,w-1$, the signature is equal to 
\[\sigma_{p/w}^1=2\left\lfloor\frac{pb}{w}\right\rfloor-(b-1)-\delta,\]
where $\delta=1$ if $w$ divides $pb$, otherwise it is $0$. Similarly, for $q=1,\ldots,b-1$, the signature
\[\sigma_{q/b}^2=2\left\lfloor\frac{qa}{b}\right\rfloor-(a-1)-\delta',\]
where $\delta'=1$ if $b$ divides $qa$.
\end{lemma}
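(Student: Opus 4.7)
The approach is to apply Neumann's algorithm \cite{Neu-splice} for computing equivariant signatures of graph multilinks, which splits the computation component-by-component because equivariant signatures are additive under splicing. It therefore suffices to compute $\sigma^1_{p/w}$ and $\sigma^2_{q/b}$ separately from the two splice components pictured in Figure~\ref{fig:linka1}.

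My first step would be to translate the Eisenbud--Neumann diagrams into explicit Seifert multilink data: the component $L^1$ has two singular fibres whose multiplicity pair is read off as $(b, w)$ with $b$ ordinary fibres decorated as arrowheads (so that the overall multiplicity $w = a + be$ agrees with the value computed earlier in the section), and symmetrically $L^2$ has pair $(a,b)$ with $a$ arrowheads. My second step would be to invoke Neumann's closed-form expression for the signature contribution of a Seifert node at a character $\lambda = e^{2\pi i x}$. For a two-node Seifert multilink with parameters $(p,q)$ and $\lambda = e^{2\pi i k/q}$ with $1 \le k \le q-1$, the formula reduces (after collecting the contributions from all arrowhead edges, which cancel because the arrowhead multiplicities are $1$) to the classical Brieskorn lattice count $2\lfloor kp/q \rfloor - (p-1)$. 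Substituting $(p, q, k) = (b, w, p)$ for $L^1$ and $(p, q, k) = (a, b, q)$ for $L^2$ yields the main terms in the lemma.

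The third step is to account for the correction $\delta$ (respectively $\delta'$). In Neumann's formula the equivariant signature is, by convention, the average of the one-sided limits at roots of the Alexander polynomial $\Delta(t)$. Now $\lambda = e^{2\pi i p/w}$ is always a root of the factor $(t^w-1)^{b-1}$, but if additionally $w \mid pb$ then $\lambda$ is simultaneously a root of the factor $(t-1)$ (or $(t^b-1)^{a-1}$) and the lattice count in Neumann's formula picks up an extra boundary lattice point whose weight is $1/2$ rather than $1$; averaging the one-sided limits precisely subtracts~$1$, yielding the $-\delta$ correction. The same argument applied to $L^2$ with $b \mid qa$ produces $\delta'$.

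The main obstacle will be the bookkeeping in the first two steps: faithfully converting the splice-diagram edge weights into the parameters needed for Neumann's formulas, and verifying that the $b$ arrowheads of $L^1$ (respectively $a$ arrowheads of $L^2$) do not each contribute an independent signature summand but instead enter once through the total multiplicity. Once these data are in place, the closed form follows immediately from the classical Brieskorn lattice count for torus knot signatures, and the $\delta$ correction is a routine application of the averaging convention.
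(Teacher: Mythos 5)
Your overall strategy --- additivity of equivariant signatures under splicing plus Neumann's closed formula \cite[Theorem 5.3]{Neu-splice} applied separately to the two splice components of Figure~\ref{fig:linka1} --- is exactly the route the paper takes, and your explanation of the $\delta$-correction via the averaging convention (the sawtooth vanishing at integers) is correct. However, the core of your second step contains a genuine error. Neumann's formula gives the equivariant signature of the splice component as $2\sum_{j}\sawtooth{s_jp/w}$, where the sum runs over \emph{all} arrowheads. For the component carrying $b$ arrowheads of multiplicity $1$ and one arrowhead of multiplicity $a$ on the exceptional fibre of order $e$, one computes $s_1=\cdots=s_b=1$ and $s_{b+1}=(a-(a+be))/e=-b$, so
\[\sigma^1_{p/w}=2b\sawtooth{\frac{p}{w}}-2\sawtooth{\frac{pb}{w}}.\]
The $b$ multiplicity-one arrowheads therefore do \emph{not} cancel: they contribute $2b\bigl(\tfrac{p}{w}-\tfrac12\bigr)=\tfrac{2pb}{w}-b$, which is the dominant, linear-in-$p$ part of the answer, while the single remaining arrowhead contributes $-2\sawtooth{pb/w}$, which is what produces both the floor and the $\delta$. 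If the arrowhead contributions cancelled as you claim, the signature would reduce to a single sawtooth term bounded by $1$ in absolute value, incompatible with the stated formula $2\intfrac{pb}{w}-(b-1)-\delta$. The verification you defer to the end --- that the $b$ arrowheads ``do not each contribute an independent signature summand but instead enter once through the total multiplicity'' --- is precisely the statement that would fail.

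Relatedly, your reading of the Seifert data is off: the splice component is not a Seifert multilink with ``multiplicity pair $(b,w)$''. It has one exceptional fibre of order $e$ (carrying the multiplicity-$a$ arrowhead created by cutting the splice edge) together with $b$ ordinary fibres of multiplicity one; the number $w=a+be$ arises only as the total multiplicity $\sum_j m_j\alpha_1\cdots\widehat{\alpha_j}\cdots\alpha_{b+1}$ of the node, not as the order of a singular fibre. That the final answer resembles a Brieskorn-type count for the pair $(b,w)$ is the \emph{output} of the computation above, not a legitimate starting point. Once the $s_j$ are computed honestly and the two sawtooth terms are combined, your treatment of $\delta$ and the symmetric computation for the second component (swap $a$ and $b$, set $e=0$) go through as in the paper.
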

\begin{proof}
We use the notation from \cite{Neu-splice}. The two parts (for $L^1$ and $L^2$) are analogous, we will prove only the first one. The second part
can be deduced from the first by swapping the roles of $a$ and $b$ and setting $e=0$.

The (multi)link $L^1$ has $b$ arrowheads with multiplicity $1$ and one arrowhead with multiplicity $a$. 
Let us set $m_1=\ldots=m_b=1$, $m_{b+1}=a$. Furthermore $\alpha_1=\ldots=\alpha_{b}=1$ and $\alpha_{b+1}=e$, 
The numbers $\beta_1,\ldots,\beta_{b+1}$
are defined by the condition 
\[\beta_j\alpha_1\ldots\widehat{\alpha_j}\ldots \alpha_{b+1}\equiv1\bmod \alpha_{j},\] 
hence 
$\beta_1=\ldots=\beta_b=0$ and $\beta_{b+1}=1$. Therefore $w=a+be$
is the multiplicity, and we have \[s_1=\ldots=s_{b}=\frac{(m_1-\beta_1w)}{\alpha_1}=1 \textrm{ and } s_{b+1}=\frac{(a-(a+ke))}{e}=-b.\]

Suppose now that $p=1,\ldots,w-1$. By \cite[Theorem 5.3]{Neu-splice} the equivariant
signature of the splice component at $e^{2\pi ip/w}$ is equal to
\[
\sigma^1_{p/w}=2\sum_{j=1}^{b+1}\sawtooth{\frac{s_jp}{w}}=2b\sawtooth{\frac{p}{w}}-2\sawtooth{\frac{pb}{w}},
\]
where $x\mapsto\langle x\rangle$ is the sawtooth function, that is
\begin{equation}\label{eq:sawtoothfunction}
\langle x\rangle =\begin{cases} \{x\}-\frac12 & x\not\in\Z \\ 0 & x\in\Z.\end{cases}
\end{equation}

The expression for $\sigma^1_{p/m}$ can be rewritten as
\begin{equation}\label{eq:sigform}
\sigma^1_{p/w}=2\left\lfloor\frac{pb}{w}\right\rfloor-(b-1)-\delta,
\end{equation}
where $\delta=1$ if $w|pb$ (that is, if $b\frac{p}{w}$ is an integer), otherwise $\delta=0$. This proves the first part.
\end{proof}

We then observe that since $b$ divides $qa$ if and only if it $b|qw$, the case $\delta=1$ is equivalent to $\delta'=1$. Hence this happens only if $x$ can be written as $p/w$ and $x$ can be written as $q/b$ for some integers $p$ and $q$. Note that this observation is reflected in the structure of Table~\ref{table:one}: the
contribution of an element $x$ that can be written both as $\frac{p}{w}$ and as $\frac{q}{b}$ (second row in Table~\ref{table:one}) is not
merely a sum of the contribution from the fourth and the sixth row.

\subsection{The part $\lambda\neq 1$ of the spectrum}
Now to the third step in our process, where we relate the Hodge numbers to the Alexander polynomial and to the equivariant signatures for $\lambda\neq 1$.
\begin{remark}
For the reader's convenience, in Section~\ref{signissue} we discuss the sign conventions used throughout this section. We also present a sample
computation of the spectrum.
\end{remark}
We begin with the following observation; see \cite[Section 4.1]{BN-hodge}.

\begin{equation}\label{eq:sumofhodge}
\sum_{u=\pm 1}\left(p^1_\lambda(u)+2p^2_\lambda(u)\right)=\ord_{t=\lambda}\Delta(t)=\begin{cases} 
b-1&\textrm{if $\lambda=e^{2\pi i p/w}$}\\ 
a-1 &\textrm{if $\lambda=e^{2\pi i q/b}$}\\
a+b-2 &\textrm{if $\lambda=e^{2\pi i q/b}=e^{2\pi ip/w},$}
\end{cases}
\end{equation}
where we should interpret the expressions like $\lambda=e^{2\pi iq/b}$ as ``there exists $q\in\Z$ such that $\lambda=e^{2\pi iq/b}$''.

On the other hand, the equivariant signature can be computed from Hodge numbers as
\begin{equation}\label{eq:hodgediff}p^1_\lambda(-1)-p^1_\lambda(+1)=\sigma_x^1+\sigma_x^2,
\end{equation}
where $x$ is such that $e^{2\pi i x}=\lambda$.
To complete the picture we note that
\[p^2_{\lambda}(+1)+p^2_{\lambda}(-1)=\ord_{t=\lambda}\Delta_2(t),\]
but we will not need this formula.

Even though the three above formulae are insufficient to determine the full Hermitian Variation Structure of $L_{a,b}$ corresponding to the eigenvalue $\lambda$:
the terms $p^2_\lambda(+1)$ and $p^2_\lambda(-1)$ give the same contribution to all the three formulae. Luckily, their contribution to
the spectrum is also the same.
The first two formulae are enough to recover the spectrum. Let us cite a result from \cite[Section 2.3]{BN-hodge}. 

\begin{proposition}
Let $x\in(0,1)$ and $\lambda=e^{2\pi ix}$. Then the multiplicity of $x$ in the spectrum is equal to
\[A_x:=p^1_\lambda(-1)+p^2_\lambda(+1)+p^2_\lambda(-1)\]
and the multiplicity of $1+x$ in the spectrum is equal to
\[B_x:=p^1_\lambda(+1)+p^2_\lambda(+1)+p^2_\lambda(-1).\]
\end{proposition}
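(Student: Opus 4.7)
My plan is to derive the formulas for $A_x$ and $B_x$ from the classification of simple Hermitian Variation Structures together with the definition of the spectrum in terms of the Hodge filtration on the vanishing cohomology. By definition of the Hodge numbers $p^k_\lambda(u)$, the generalized $\lambda$-eigenspace of the monodromy decomposes as an orthogonal direct sum of simple HVS blocks of types $(k,\lambda,u)$, with $p^k_\lambda(u)$ summands of each type. Since the spectrum is additive across orthogonal decompositions, it is enough to compute the spectral contribution of each simple block separately and collect terms.

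First I would treat size-$1$ blocks. For $\lambda = e^{2\pi i x}$ with $x\in(0,1)$, the generalized $\lambda$-eigenspace can only contribute spectral numbers in the set $\{x,1+x\}$, and the choice is determined by which level of the Hodge filtration the generator lies in. In the conventions of \cite{BN-hodge} the sign $u$ records exactly this choice: a simple block with $u=-1$ puts its generator at the Hodge level corresponding to spectral number $x$, whereas $u=+1$ puts it at the level corresponding to $1+x$. Thus size-$1$ blocks contribute $p^1_\lambda(-1)$ copies of $x$ and $p^1_\lambda(+1)$ copies of $1+x$.

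For a simple size-$2$ block, the Jordan relation forces $(h-\lambda I)$ to shift one basis vector into the next lower graded piece of the Hodge filtration, so the two basis vectors automatically sit at adjacent Hodge levels. Consequently each size-$2$ block contributes one copy of $x$ and one copy of $1+x$, independently of the sign $u$. Summing the contributions over all simple summands yields
\[
A_x=p^1_\lambda(-1)+p^2_\lambda(+1)+p^2_\lambda(-1),\qquad B_x=p^1_\lambda(+1)+p^2_\lambda(+1)+p^2_\lambda(-1),
\]
which is the content of the proposition. The main obstacle is keeping the sign conventions consistent: whether $u=+1$ corresponds to $x$ or to $1+x$ for size-$1$ blocks depends on how the Hermitian form is normalized, and a wrong choice here swaps $A_x$ with $B_x$. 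This is precisely the matter flagged for Section~\ref{signissue}; once the conventions are fixed the proposition follows by direct inspection of the four types of simple blocks.
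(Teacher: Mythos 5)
Your argument is correct in substance, but note that the paper does not actually prove this proposition: it is quoted verbatim from \cite[Section 2.3]{BN-hodge}, so there is no internal proof to match. What you have written is essentially a reconstruction of the argument from the cited source: decompose the generalized $\lambda$-eigenspace into simple Hermitian Variation Structures (this is exactly how the Hodge numbers $p^k_\lambda(u)$ are defined, via N\'emethi's classification), use additivity of the spectrum over orthogonal summands, and read off the contribution of each simple block. Your assignment of conventions --- $u=-1$ for a size-one block giving spectral number $x\in(0,1)$, $u=+1$ giving $1+x$, and a size-two block giving one of each independently of $u$ --- is consistent with the paper's trefoil check in Section~\ref{signissue}, where $p^1_{5/6}(-1)=1$ produces $\tfrac56$ and $p^1_{1/6}(+1)=1$ produces $\tfrac76$. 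One caveat: you invoke ``the definition of the spectrum in terms of the Hodge filtration on the vanishing cohomology,'' but the link at infinity $L_{a,b}$ is not the link of an isolated singularity, so there is no vanishing cohomology in the classical sense. In \cite{BN-hodge} the (mod $2$) spectrum of a general link is \emph{defined} through the Hodge numbers of its HVS, which makes the proposition closer to an unwinding of that definition than a statement about geometric Hodge structures; the identification with the classical spectrum via the Hodge filtration is a separate theorem valid for algebraic links. If you rephrase your first paragraph to take the HVS-based definition of the spectrum as the starting point, the rest of your block-by-block computation goes through and gives a complete proof of the statement the paper only cites.
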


We now observe that $A_x+B_x$ is the order at $\lambda$ of the Alexander polynomial, while $A_x-B_x$ is equal to the equivariant signature.
Hence, knowing the Alexander polynomial from Lemma~\ref{lem:Alexander} and the equivariant signature from Lemma~\ref{lem:sign} we can compute explicitly $A_x$ and $B_x$, that
is, find the spectrum. The results of the computations is presented in Table~\ref{table:one}.
We omit the details here.

\subsection{The part $\lambda = 1$ of the spectrum.}
It remains to discuss the case of $\lambda=1$. By Lemma~\ref{lem:monod} we have that $p^2_1(\pm 1)=0$. We also have that 
\[p^1_1(-1)+p^1_1(+1)=\ord_{t=1}\Delta(t)=a+b-1.\]

\begin{lemma} We have $p^1_1(-1)=0$.
\end{lemma}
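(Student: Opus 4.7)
My plan is to apply the algorithm of \cite[Section 3]{BN-spec}, which computes the Hodge numbers at $\lambda=1$ of a fibered link from the linking matrix of its components. This is exactly the fourth step outlined at the start of Section~\ref{S}, and having already pinned down $p^2_1(\pm 1)=0$ via Lemma~\ref{lem:monod} and having fixed the sum $p^1_1(+1)+p^1_1(-1)=a+b-1$ from the Alexander polynomial, only the signature of this form remains to be determined.

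First I would explicitly compute the linking numbers of the components of $L_{a,b}$. Reading off Figure~\ref{fig:link}, label the $a$ parallel strands on the left as $A_1,\ldots,A_a$ (forming the torus link $T(0,a)$) and the $b$ strands with $e$ right twists as $B_1,\ldots,B_b$ (forming the torus link $T(be,b)$, whose pairwise linking number is $be\cdot b/b^2=e$). Since the components of the two groups are pairwise clasped once by the $+1$ blow-down at the end of Section~\ref{s:linkhirz}, a direct check gives
\[\lk(A_i,A_j)=0,\qquad \lk(B_i,B_j)=e,\qquad \lk(A_i,B_j)=1.\]

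Next I would feed these linking numbers into the procedure of \cite[Section 3]{BN-spec} and compute the signature of the resulting symmetric form. Since $p^1_1(+1)+p^1_1(-1)=a+b-1$, it suffices to show that this form is positive semi-definite, which would then force $p^1_1(-1)=0$.

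The main obstacle will be the positive semi-definiteness check. The ``raw'' linking matrix with zero diagonal entries fails to be positive semi-definite as soon as $e>0$ (taking $v=e_i-e_j$ with both indices in the $B$ group gives $v^\top M v=-2e$), so the form from \cite{BN-spec} must include additional diagonal contributions---most likely coming from the edge weights of the splice diagram in Figure~\ref{fig:linka1} or from the multiplicities $b$ and $w=a+be$ of the two splice components. I expect that the origin of $L_{a,b}$ as a link at infinity, obtained from a sequence of positive blow-downs, provides the geometric reason for positivity: the relevant Seifert form restricted to the $1$-eigenspace of the monodromy inherits the definiteness of the plumbing at infinity after the Nagata reductions. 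The bookkeeping to match these contributions with the correct reduced form in \cite{BN-spec} is the genuine content of the proof.
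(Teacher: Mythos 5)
Your overall strategy is the same as the paper's: this is indeed the fourth step of Section~\ref{S}, the sum $p^1_1(-1)+p^1_1(+1)=a+b-1$ is already known, and the remaining input is the linking data of the components of $L_{a,b}$, which you compute correctly ($\lk(A_i,A_j)=0$, $\lk(B_i,B_j)=e$, $\lk(A_i,B_j)=1$). However, there is a genuine gap at exactly the point you defer as ``the genuine content of the proof,'' and your guess about how it resolves has the wrong sign. The correct self-linking formula on $H_1(F)$ (from \cite[Section 3]{Neu-other} or \cite[Section 3.7]{BN-spec}) for $L=\sum\gamma_iL^1_i+\sum\delta_jL^2_j$ is
\[
-\sum_{i<i'}(\gamma_i-\gamma_{i'})^2\lk(L^1_i,L^1_{i'})-\sum_{j<j'}(\delta_j-\delta_{j'})^2\lk(L^2_j,L^2_{j'})-\sum_{i,j}(\gamma_i-\delta_j)^2\lk(L^1_i,L^2_j),
\]
a quadratic form in the \emph{differences} of the coefficients. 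This is manifestly non-positive (all linking numbers are $\ge 0$), and it vanishes only when all $\gamma_i$ and $\delta_j$ coincide, because $\lk(L^1_i,L^2_j)=1>0$ for every pair; since the components satisfy the single relation $L^1_1+\cdots+L^2_b=0$ in $H_1(F)$, the all-equal vector is exactly the zero class, so the form is \emph{negative} definite on the $(a+b-1)$-dimensional span. In the paper's conventions $p^1_1(-1)$ counts the positive part (positive equivariant signature), so negative definiteness is precisely what forces $p^1_1(-1)=0$.

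Concretely, your plan to show positive semi-definiteness cannot succeed: for $v$ corresponding to $L^2_i-L^2_j$ the correct form evaluates to $-2a-2be=-2w<0$, so the form is genuinely negative definite, not positive semi-definite. The ``additional diagonal contributions'' you postulate are not edge weights or multiplicities of the splice diagram; expanding the displayed formula shows they are minus the row sums of the off-diagonal linking matrix, which is what makes the relation vector lie in the kernel of the naive $(a+b)\times(a+b)$ form. You also never use the relation among the components, which is needed to pass from semi-definiteness to definiteness and to match the dimension $a+b-1=\ord_{t=1}\Delta(t)$. So while the skeleton of your argument matches the paper's, the two essential ingredients --- the correct formula for the linking form on $H_1(F)$ and the relation among the components --- are missing, and the definiteness you aim for is reversed.
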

\begin{proof}
The argument is the same as in \cite[Proposition 3.4.2]{BN-spec}; we will show that the linking form on the subspace of $H_1(F)$ (recall that
$F$ is the fibre of the fibration of the complement to $L_{a,b}$, in particular it is a Seifert surface) spanned by the components of $L_{a,b}$ is negative definite.
Let us denote by $L^1_1,\ldots,L^1_a$, respectively $L^2_1,\ldots,L^2_b$, the components of $L_{a,b}$ lying on a splice component $L^1$, respectively $L^2$. 
We regard them as elements in $H_1(F)$. In $H_1(F)$ these cycles are subject to one relation, namely $L^1_1+\ldots+L^2_b=0$.
Let us consider an element $L$ in the space spanned by $L^1_1,\ldots,L^2_b$, that is
\[L=\sum \gamma_i L^1_i+\sum \delta_jL^2_j\]
And by the arguments of \cite[Section 3]{Neu-other} or \cite[Section 3.7]{BN-spec} the self-linking of $L$ is equal to
\[-\sum_{i<i'} (\gamma_i-\gamma_{i'})^2\lk(L^1_i,L^1_{i'})-\sum_{j<j'}(\delta_j-\delta_{j'})^2\lk(L^2_j,L^2_{j'})-\sum_{i,j}(\gamma_i-\delta_j)^2\lk(L^1_i,L^2_j).\]
This expression is clearly non-positive. If it is zero, then for any $i,j$ we have $\gamma_i=\delta_j$, because $\lk(L^1_i,L^2_j)=1$ for any $i,j$. This implies that
$\gamma_1=\ldots=\gamma_a=\delta_1=\ldots=\delta_b$, that is, the link represents $0\in H_1(F)$. It follows that the linking form is negative definite, so 
the argument in \cite[proof of Theorem 3.4]{Neu-other} implies that $p^1_1(-1)=0$ holds.
\end{proof}

This shows the following result.
\begin{proposition}
The number $1$ enters the spectrum with multiplicity $a+b-1$, while the number $2$ does not belong to the spectrum.
\end{proposition}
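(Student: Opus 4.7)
The plan is to assemble the proposition as an immediate consequence of the pieces just established, namely the vanishing $p^2_1(\pm 1)=0$ from Lemma~\ref{lem:monod}, the Alexander-polynomial bookkeeping of Lemma~\ref{lem:Alexander}, and the preceding lemma's identity $p^1_1(-1)=0$. Concretely, I would first compute $\ord_{t=1}\Delta(t)$ from the formula $\Delta(t)=(t-1)(t^w-1)^{b-1}(t^b-1)^{a-1}$: the leading factor contributes $1$, and since $t^w-1$ and $t^b-1$ each have a simple zero at $t=1$, the two exponentiated factors contribute $b-1$ and $a-1$, summing to $a+b-1$.

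Next I would combine this with the analogue of \eqref{eq:sumofhodge} at $\lambda=1$, which reads
\[p^1_1(+1)+p^1_1(-1)+2p^2_1(+1)+2p^2_1(-1)=\ord_{t=1}\Delta(t)=a+b-1.\]
Using $p^2_1(\pm1)=0$ from Lemma~\ref{lem:monod} this collapses to $p^1_1(+1)+p^1_1(-1)=a+b-1$, and invoking the preceding lemma we deduce $p^1_1(+1)=a+b-1$.

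Finally I would invoke the Hodge-number/spectrum dictionary of \cite{BN-hodge} at $\lambda=1$: for the eigenvalue $1$ of the monodromy, $p^1_1(+1)$ (together with $p^2_1(\pm 1)$) controls the multiplicity of the spectral number $1$, while $p^1_1(-1)$ (together with $p^2_1(\pm 1)$) controls the multiplicity of the spectral number $2$. Since $p^2_1(\pm 1)=0$, the multiplicity of $1$ in the spectrum equals $p^1_1(+1)=a+b-1$ and the multiplicity of $2$ equals $p^1_1(-1)=0$.

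The only non-formal step here is the appeal to the integer-part dictionary, which is a routine case of the conventions recalled in \cite[Section 2.3]{BN-hodge}; the substantive obstacle was already overcome in the preceding lemma via the negative definiteness of the linking form on the span of the link components. Once that is in hand, this proposition is essentially a bookkeeping statement.
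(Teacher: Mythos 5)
Your argument is exactly the paper's: the authors record $p^2_1(\pm 1)=0$ (from the monodromy theorem) and $p^1_1(-1)+p^1_1(+1)=\ord_{t=1}\Delta(t)=a+b-1$ immediately before the lemma $p^1_1(-1)=0$, and then deduce the proposition by the same Hodge-number/spectrum dictionary at $\lambda=1$ that you invoke. Your write-up is a correct, slightly more explicit expansion of the same bookkeeping, with the substantive input (negative definiteness of the linking form) correctly identified as living in the preceding lemma.
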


In this way the computation of the spectrum of the link at infinity is completed.
\subsection{Some examples}\label{signissue}

It might be quite difficult not to get lost in various conventions. To show that the sign conventions are consistent, 
we look at the positive trefoil, whose signature function
is $-2$ for $z\in(e^{2\pi i/6},e^{2\pi 5i/6})$ and zero outside of the closure of this interval. The input for Neumann's algorithm for the equivariant signature
is $m_1=0,m_2=0,m_3=1$, $\alpha_1=2$, $\alpha_2=3$ and $\alpha_3=1$, so that $w=6$ and $s_1=-3$, $s_2=-4$, $s_3=1$. 
The equivariant signature $2\sum \langle s_jp/q\rangle$ is equal to $-1$ for $p/q=1/6$ and $+1$
for $p/q=5/6$. The spectrum should be $\{\frac56,\frac76\}$ (this is the spectrum of the singularity $x^2+y^3=0$ and we can compute it from the Thom--Sebastiani
formula) and the Hodge numbers are $p^1_{5/6}(-1)=1$ and 
$p^1_{1/6}(+1)=1$; see \cite[Section 5.1]{BN-hodge}. In particular the conventions we use are the following
\begin{itemize}
\item The equivariant signatures are taken as $\sum\langle s_jp/q\rangle$, where $\langle x\rangle$ is the sawtooth function; see \eqref{eq:sawtoothfunction}.
\item The equivariant signature is half the jump of the Tristram--Levine signature. That is half the difference between the right limit and the left limit
of the function $x\to \sigma(e^{2\pi ix})$.
\item The Hodge numbers $p^1_\lambda(+1)$ correspond to negative equivariant signature and the Hodge numbers $p^1_\lambda(-1)$ correspond to positive
values of the equivariant signature. Therefore the equivariant signature is $p^1_\lambda(-1)-p^1_\lambda(+1)$.
\item The Hodge numbers $p^1_\lambda(-1)$ correspond to values in the spectrum in the interval $(0,1)$, whereas the Hodge numbers $p^1_\lambda(+1)$
correspond to values in $(1,2)$.
\end{itemize}

Now let us present a computation of the spectrum.

\begin{example}
Suppose that $a=6$, $b=4$ and $e=0$, moreover $w=6$. The equivariant signatures for $\sigma^1_{i/6}$ are equal to
\[-3,-1,0,1,3\]
The equivariant signatures $\sigma^2_{j/4}$ are equal to
\[-3,0,3\]
By additivity, the equivariant signatures of $L_{4,6}$ at $e^{2\pi ix}$ where 
\[x\in\left\{\frac{1}{6},\frac{1}{4},\frac{1}{3},\frac{1}{2},\frac{2}{3},\frac{3}{4},\frac{5}{6}\right\}\] 
are respectively \[\{-3,-3,-1,0,1,3,3\};\] 
the order of the root of the Alexander polynomials at each of this point is respectively \[\{3,5,3,8,3,5,3\}.\] Hence, the part of the spectrum in $(0,1)$ is
\[\left\{\frac14,\frac13,\frac12,\frac12,\frac12,\frac12,\frac23,\frac23,\frac34,\frac34,\frac34,\frac34,\frac56,\frac56,\frac56\right\},\]
and the part in $(1,2)$ is symmetric. The value $1$ appears in the spectrum with multiplicity \[a+b-1=9.\] The total number of elements in the spectrum
is \[w(b-1)+b(a-1)+1=18+20+1=39.\] Notice that twice the genus of a curve of type $(4,6)$ is equal to $(4-1)(2\cdot 6-2)=30$ and the difference
$39-30=9=a+b-1$.
\end{example}

\subsection{The semicontinuity of the spectrum}
We are now ready to prove Theorem~\ref{thm:spectrum}, comparing the spectrum of the link at infinity to the spectrum of the singular points of $C$. 
\begin{proof}[Proof of Theorem~\ref{thm:spectrum}]
We follow the argument of \cite{BN-spec}.
Let us pick $L$ and $M'$ as in Section~\ref{s:linkhirz}. They intersect $C$ transversally and let $N$ be a tubular neighbourhood of $L\cup M'$. The complement
$B=X_e\setminus  N$ is a standard 4--ball. By definition $L_{a,b}=C\cap \partial B$ is the link of $C$ at infinity. Let $C'=C\cap B$. Let $C''$ be a smoothing
of $C'$.

Suppose $x\in[0,1]$ is such that $\xi:=e^{2\pi ix}$ is not a root of the Alexander polynomials of $L_{a,b}$. Then, by 
\cite[Proposition 2.5.5]{BN-spec} we have
\begin{equation}\label{eq:from_morse}
\begin{split}
-\sigma_{L_{a,b}}(\xi)+(1-\chi(C''))&\ge\sum_{j=1}^k(-\sigma_{K_j}(\xi)+\mu_j)\\
\sigma_{L_{a,b}}(\xi)+(1-\chi(C''))&\ge\sum_{j=1}^k(\sigma_{K_j}(\xi)+\mu_j),
\end{split}
\end{equation}
where $\sigma_L(\xi)$ denotes the Tristram--Levine signature, and $\mu_j$ is the Milnor number.

Now we have that $1-\chi(C'')=b_1(C'')$, but $C''$ is the fibre of the link $L_{a,b}$. So $b_1(C'')$ is just the degree of the Alexander polynomial
of $L_{a,b}$. On the other hand, $\mu_j$, the Milnor number of the singularity of $C$ corresponding to $K_j$, is the degree of the Alexander
polynomial of the link $K_j$.
Therefore we can rewrite \eqref{eq:from_morse} as
\begin{equation}\label{eq:from_morse2}
\mp\sigma_{L_{a,b}}+\deg\Delta_{L_{a,b}}\ge \sum_{j=1}^k\left(\mp\sigma_{K_j}(\xi)+\deg\Delta_{K_j}\right).
\end{equation}

By \cite[Corollary 2.4.6]{BN-spec}, \eqref{eq:from_morse2} is precisely the statement of Theorem~\ref{thm:spectrum}.

\end{proof}
\section{Examples and applications}\label{Examples}

In this section we will show applications of Theorem~\ref{THF} and Theorem~\ref{thm:spectrum} and get results about rational cuspidal curves in Hirzebruch surfaces. In particular, we show that the theorems imply that not all possible (after fundamental results, i.e., the genus formula etc.) cusps can exist on such curves. Moreover, we give explicit constructions of some of the possible curves that pass the obstructions in the theorems.

\subsection{A simple multiplicity estimate}

The following result bounding the multiplicity of a singularity on a curve in a Hirzebruch surface is a trivial consequence of B\'ezout's theorem. We show that for rational cuspidal curves it can also be proven using topological methods.
\begin{proposition}\label{prop:multbound}
Let $r$ be the multiplicity of a singular point on a curve (in general not necessarily rational or cuspidal) 
$C$ of type $(a,b)$ in the Hirzebruch surface $X_e$. Then $r\le b$.
\end{proposition}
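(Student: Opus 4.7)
The plan is to give two proofs. First, the classical B\'ezout-style argument applies to any curve: take a fibre $L'$ passing through the singular point $p$ of multiplicity $r$. Then $L' \cdot C = b$, and the local intersection multiplicity at $p$ satisfies $(L' \cdot C)_p \ge \mathrm{mult}_p(L') \cdot \mathrm{mult}_p(C) = r$, hence $r \le b$. This handles the general statement.

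For the topological proof (in the rational cuspidal case), I would apply Theorem~\ref{THF} with the choice $s_1 = 1$, $s_2 = 0$. Then the presentation reads
\[m + g = s_1 b + s_2(a+be) + 1 = b+1,\]
and $P(1,0) = 2$, so the theorem yields $R(b+1) \ge 2$, provided the range condition $m = b+1-g \in [-g,g]$ holds. One should discard the trivial cases $b = 0$ (then $C$ is reducible) and $b = 1$ (then $C$ is homologous to a section, hence smooth by the classification of rational curves on $X_e$), so assume $b \ge 2$.

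Suppose for contradiction that some singular point $z_1$ has multiplicity $r \ge b+1$, with semigroup $S_1$. Since $r$ is the smallest positive element of $S_1$, we have $S_1 \cap [0,b+1) = \{0\}$, so $R_{S_1}(b+1) = 1$. Taking $k_1 = b+1$ and $k_j = 0$ for $j \ge 2$ in the infimum convolution defining $R$ gives
\[R(b+1) \le R_{S_1}(b+1) + \sum_{j \ge 2} R_{S_j}(0) = 1,\]
which contradicts the bound $R(b+1) \ge 2$ from Theorem~\ref{THF}.

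The only technical point is verifying the range $b+1 \le 2g$. Since the $\delta$-invariant of any singular point of multiplicity $r$ is at least $r(r-1)/2$ (the amount by which the first blow-up decreases $\delta$), the assumption $r \ge b+1$ gives $g \ge r(r-1)/2 \ge b(b+1)/2$, whence $2g \ge b(b+1) \ge 2(b+1) > b+1$ for $b \ge 2$. This is the main bookkeeping step; once it is in place, the contradiction is immediate. As a sanity check, observe that the same lower bound $R(b+1) \ge 2$ also drops out of Theorem~\ref{thm:generalsurface} applied with $\mathcal{L} = \mathcal{O}(L)$ (so $t = L \cdot C = b$ and $\dim \Gamma(X_e, \mathcal{L}) = 2$), which matches Example~\ref{ex:notsogeneralsurface} at $(s_1,s_2) = (1,0)$ and confirms that the $d$-invariant estimate and the B\'ezout argument are giving the same information at the level of multiplicities.
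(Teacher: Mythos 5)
Your proposal is correct and follows essentially the same route as the paper: the classical B\'ezout argument with a fibre through the point for the general case (which the paper only cites), and, for rational cuspidal curves, Theorem~\ref{THF} with $(s_1,s_2)=(1,0)$ giving $R(b+1)\ge 2$, combined with the infimum-convolution inequality $R(b+1)\le R_{S_k}(b+1)$ to contradict a multiplicity exceeding $b$. Your explicit verification that $b+1\le 2g$ under the contradiction hypothesis (via $\delta\ge r(r-1)/2$) is a detail the paper leaves implicit, and it is handled correctly.
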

\begin{proof}
The standard proof is given for instance in \cite[Theorem 3.1.5]{MOEPHD}.

Suppose now that the curve $C$ is rational and cuspidal.
We set $s_1=1$ and $s_2=0$ in Theorem~\ref{THF} and obtain $R(b+1)\ge 2$, for the $R$ function associated with $C$ (see Definition~\ref{def:Rfunction}). 
Suppose that $z_1,\ldots,z_n$ are singular points of $C$ and that $R_1,\ldots,R_n$
are the corresponding semigroup densities as in \eqref{eq:defR}. Let us consider the point $z_k$, for $k=1,\ldots,n$. Set 
$m_1=\ldots=m_{k-1}=0$, $m_{k+1}=\ldots=m_n=0$ and $m_k=b+1$, so that $\sum m_j=b+1$. By definition of the infimum convolution we get
\[R_1(m_1)+\ldots+R_n(m_n)\ge R(m_1+\ldots+m_n).\]
But $R_1(m_1)=\ldots=R_{k-1}(m_{k-1})=R_{k+1}(m_{k+1})=\ldots=R_n(m_n)=0$. Eventually we obtain
\[R_k(b+1)\ge 2.\]
But if $z_k$ has multiplicity greater than $b$, then
zero is the only element in the semigroup of $z_k$ that is smaller than $b+1$, which leads to the desired contradiction.
\end{proof}

\subsection{Singular points with multiplicity $3$ on $(4,4)$ curves.}

Consider curves of type $(4,4)$ in the Hirzebruch surface $X_e$. Such a curve has genus $6e+9$. We ask, whether such a curve can have a
singularity of type $(3,6e+10)$. Note that the genus formula implies that if a $(4,4)$ curve has such a singularity, then it is rational and cuspidal.

\begin{proposition}
If $e$ is even, then a $(4,4)$ curve in $X_e$ cannot have a $(3,6e+10)$ singularity.
\end{proposition}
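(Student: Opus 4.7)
The plan is to apply Theorem~\ref{THF} with a carefully chosen pair $(s_1,s_2)$, exploiting the fact that the semigroup $S=\langle 3,6e+10\rangle$ misses its second generator by only one unit.

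First, I would note that any $(4,4)$ curve carrying a $(3,6e+10)$ singularity is automatically rational and unicuspidal. Indeed, the $\delta$-invariant of such a cusp is $(3-1)(6e+10-1)/2=6e+9$, which equals the arithmetic genus $g=6e+9$; hence there is no room for further singularities and the geometric genus is forced to be zero. The singularity is genuinely a cusp because $\gcd(3,6e+10)=1$ (as $6e+10\equiv 1\pmod 3$). Thus Theorem~\ref{THF} applies with $R=R_S$ for $S=\langle 3,6e+10\rangle$.

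The decisive step is to choose
\[ s_2=1,\qquad s_1=\tfrac{e}{2}+1. \]
This is the only place where the parity of $e$ enters: $s_1$ is an integer precisely because $e$ is even. The values are calibrated so that
\[ m+g = 4s_1+(4+4e)s_2+1 = (2e+4)+(4+4e)+1 = 6e+9 = g, \]
hence $m=0\in[-g,g]$ and in particular $m+g$ sits exactly one unit below the second generator of $S$.

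Next, I would compute both sides of \eqref{eq:THF}. Since the only elements of $S$ strictly below $6e+10$ are the multiples of $3$, one has $R_S(6e+9)=2e+3$. On the other hand
\[ P(s_1,s_2) = \bigl(\tfrac{e}{2}+2\bigr)\cdot 2 + \tfrac{1}{2}\cdot 1\cdot 2\cdot e = 2e+4. \]
The strict inequality $2e+3<2e+4$ contradicts Theorem~\ref{THF}, proving the Proposition.

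The only real obstacle is spotting the right $(s_1,s_2)$; once identified, the rest is bookkeeping. The computation also clarifies why the evenness of $e$ is essential: for odd $e$ the half-integer $s_1=\tfrac{e}{2}+1$ is forbidden, and the neighbouring integer choices turn out to give \eqref{eq:THF} with equality rather than strict inequality. Theorem~\ref{THF} therefore genuinely cannot obstruct the odd case, and a different invariant (for instance the spectrum criterion of Theorem~\ref{thm:spectrum}) would be required there.
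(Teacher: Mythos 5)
Your proof is correct and follows essentially the same route as the paper: the same choice $s_1=e/2+1$, $s_2=1$ yielding $m+g=6e+9$, the same count $R(6e+9)=2e+3$ versus $P(s_1,s_2)=2e+4$, and the same contradiction with Theorem~\ref{THF}. The preliminary check that the curve is forced to be rational and unicuspidal matches the remark the paper makes just before stating the proposition.
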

\begin{proof}
Set $s_1=1+e/2$ and $s_2=1$. We obtain $s_1b+s_2(a+be)+1=6e+9$. Theorem~\ref{THF} implies that $R(6e+9)\geq 2e+4$. But $R(6e+9)$ is the number
of the elements in the interval $[0,6e+8]$ that belong to the semigroup generated by $3$ and $6e+10$. But this is exactly the set of
all integers in $[0,6e+8]$ divisible by $3$. Its cardinality is $2e+3$, so we get a contradiction.
\end{proof}

The result does not say anything about the case when $e$ is odd. In fact, Theorem~\ref{THF} will not obstruct the existence of a $(3,6e+10)$ singularity
on a $(4,4)$ curve in $X_e$ for $e$ odd.

\begin{remark}
As $4$ is even and $4-1$ is odd, Theorem~\ref{thm:finiteness} below will not obstruct the existence of a $(3,6e+10)$ on a $(4,4)$ curve, regardless of the
parity of $e$.
\end{remark}

\subsection{Curves of type $(6,6)$ with one cusp with one single Puiseux pair in $X_0$}
We now compute the spectra for the link at infinity and the link at the cusp for rational unicuspidal curves of type $(6,6)$ with a single Puiseux pair in $X_0$. We do this for all curves with Puiseux pair that fits the genus formula, and there are three such curves. The computations show that Theorem~\ref{thm:spectrum} obstructs the existence of one of these curves. Moreover, we provide a sketch of the construction of one of the other curves. Note that we cannot say anything about the existence of the remaining curve.

Let $C$ be a unicuspidal curve of type $(6,6)$ in $X_0$, and let the cusp have a single Puiseux pair. We have $a=b=6$, $g=(6-1)(6-1)=25$ and $2g=50$. 
The list of theoretically possible Puiseux pairs is $(2,51)$, $(3,26)$ and $(6,11)$. Using Table~\ref{table:one}, we find: 
\[Sp^\infty_{6,6} = \Bigl{\{}\frac{1}{6}^1,\frac{1}{3}^3,\frac{1}{2}^5, \frac{2}{3}^7, \frac{5}{6}^9, 1^{11}, \frac{7}{6}^9, \frac{4}{3}^7, \frac{3}{2}^5,\frac{5}{3}^3, \frac{11}{6}^1 \Bigr{\}},\] where the exponent denotes the multiplicity of the element.
\begin{enumerate}
 \item[$\mathbf{(2,51)}$] For this Puiseux pair we choose $x=\frac{1}{2}+\epsilon$ for some $\epsilon$ such that $\frac{1}{51}>\epsilon >0$. Then \[\#Sp^\infty_{6,6} \cap (x,x+1)=48.\] On the other hand, \[Sp_{2,51}\cap(x,x+1)=\Bigl{\{}\frac{1}{2}+\frac{1}{51}, \ldots, \frac{1}{2}+\frac{50}{51} \Bigr{\}},\] hence \[\#Sp_{2,51}\cap (x,x+1)=50.\] By Theorem~\ref{thm:spectrum} this is not possible, so such a cusp cannot exist on a curve of type $(6,6)$.
\item[$\mathbf{(3,26)}$] For this Puiseux pair we choose $x=\frac{1}{3}+\epsilon$ for some $\epsilon$ such that $\frac{1}{26}>\epsilon >0$. Then \[\#Sp^\infty_{6,6} \cap (x,x+1)=48.\] On the other hand, \[Sp_{3,26}\cap(x,x+1)=\Bigl{\{}\frac{1}{3}+\frac{1}{26}, \ldots, \frac{1}{3}+\frac{25}{26}, \frac{2}{3}+\frac{1}{26}, \ldots, \frac{2}{3}+\frac{17}{26} \Bigr{\}},\] hence \[\#Sp_{3,26}\cap (x,x+1)=42.\] This does not violate Theorem~\ref{thm:spectrum}.
\item[$\mathbf{(6,11)}$] For this Puiseux pair we choose $x=\frac{1}{6}+\epsilon$ for some $\epsilon$ such that $\frac{1}{11}>\epsilon >0$. Then \[\#Sp^\infty_{6,6} \cap (x,x+1)=44.\] On the other hand, 
\begin{align*}
Sp_{6,11}\cap(x,x+1)=\Bigl{\{}&\frac{1}{6}+\frac{1}{11}, \ldots, \frac{1}{6}+\frac{10}{11}, 
\\&\frac{1}{3}+\frac{1}{11}, \ldots, \frac{1}{3}+\frac{9}{11},
\\&\frac{1}{2}+\frac{1}{11}, \ldots, \frac{1}{2}+\frac{7}{11},
\\&\frac{2}{3}+\frac{1}{11}, \ldots, \frac{2}{3}+\frac{5}{11},
\\&\frac{5}{6}+\frac{1}{11}, \ldots, \frac{5}{6}+\frac{3}{11}   \Bigr{\}},
\end{align*} 
hence \[\#Sp_{6,11}\cap (x,x+1)=34.\] This curve passes the criterion from Theorem~\ref{thm:spectrum}, and it can in fact be constructed by a simple transformation of the plane unicuspidal curve $y^5z-x^6=0$. Indeed, blow up two points on the line $y=0$, which is tangent to the curve at the cusp $(0:0:1)$, and contract its strict transform;
see Example~\ref{ex:brseries} and \cite{MOEPHD}.
\end{enumerate}

\subsection{A finiteness theorem}

As an application of Theorem~\ref{thm:spectrum} we shall prove the following result.

\begin{theorem}\label{thm:finiteness}
Suppose $a,b>0$. Then there is only a finite number of triples $(r,s,e)$ such that
\begin{itemize}
\item $r,s\ge 2$ are coprime, $e\ge 0$;
\item A singularity of type $(r,s)$ occurs on a rational cuspidal curve of type $(a,b)$ in $X_e$;
\item $r<b$ and $r\neq b-1$ if $b$ is even.
\end{itemize}

Put differently, for given $(a,b)$ and sufficiently large $e$ the only possible rational cuspidal curves of type $(a,b)$ in $X_e$ with one singular point having one 
Puiseux pair $(r,s)$ are those with $b=r$ or $b=r+1$ and $b$ even.
\end{theorem}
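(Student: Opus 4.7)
The plan is to apply Theorem~\ref{thm:spectrum} with a carefully chosen $x$ and to show that the resulting inequality is violated once $e$ is large. By Proposition~\ref{prop:multbound} one has $r\le b$, and since $(r-1)(s-1)=2g=2(a-1)(b-1)+b(b-1)e$, the integer $s$ is determined by $r$ and $e$. So it suffices to fix $r$ in the admissible range and show that only finitely many $e\ge 0$ can occur. I would choose $x=(r-1)/r-\delta$ with $\delta>0$ small enough that $x\notin Sp^\infty_{a,b}$; this choice is natural because $Sp_{r,s}$ has no elements on $(0,1/r)$, so the cusp spectrum packs as much of itself as possible into $(x,x+1)$.

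A direct lattice enumeration in $Sp_{r,s}=\{i/r+j/s:1\le i\le r-1,\,1\le j\le s-1\}$, combined with the symmetry $y\mapsto 2-y$ to count the part above $1$ and a short telescoping argument on $((r-1)/r-\delta,1)$, yields the closed form $\#Sp_{r,s}\cap(x,x+1)=g+s-\lfloor s/r\rfloor-1$, which is linear in $e$. For $\#Sp^\infty_{a,b}\cap(x,x+1)$, one splits the count at the integer $1$ (contributing $a+b-1$) and uses Table~\ref{table:one} to express the two flanking halves as partial sums of $\lfloor pb/w\rfloor$ and $\lfloor qa/b\rfloor$, where $w=a+be$. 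Combining the reflection identity $\lfloor pb/w\rfloor+\lfloor(w-p)b/w\rfloor=b-1+[w\mid pb]$ with the counting representation $\sum_{p=1}^{M}\lfloor pb/w\rfloor=\sum_{k=1}^{\lfloor Mb/w\rfloor}(M+1-\lceil kw/b\rceil)$, an elementary analysis extracts an asymptotic expansion whose leading coefficient in $e$ is explicitly computable from $a,b,r$ and $K:=\lfloor b/r\rfloor$, with a bounded $O_{a,b,r}(1)$ error.

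Subtracting, the coefficient of $e$ in $\#Sp_{r,s}\cap(x,x+1)-\#Sp^\infty_{a,b}\cap(x,x+1)$ collapses to the positive rational quantity $K\bigl(2b-r(K+1)\bigr)/r$, which is strictly positive for every $2\le r<b$ and vanishes at $r=b$. Hence for $e$ larger than some effective threshold $e_0(a,b,r)$ this difference is positive, violating the first inequality of Theorem~\ref{thm:spectrum}, so no such curve can exist. Since the admissible range of $r$ is finite, the finiteness claim follows. The hard part will be the preceding asymptotic step: the $O(1)$ errors in the partial floor sums depend on the residues of $w$ modulo $b$ and modulo $r$, and one must bound these fluctuations uniformly in $e$ so that they do not overwhelm the leading term---a delicacy which presumably underlies the extra parity hypothesis ``$r\neq b-1$ if $b$ is even'' in the statement, where the leading coefficient becomes small relative to the oscillatory error.
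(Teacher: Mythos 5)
Your argument is sound and it takes a genuinely different route from the paper's. The paper applies Theorem~\ref{thm:spectrum} at $x=\tfrac12$, where both sides are asymptotic to $\tfrac34g$ plus a term linear in $e$ whose coefficient must be extracted from the incomplete sawtooth sum $\sum_{p=w/2}^{w-1}\langle pb/w\rangle$; evaluating that limit is the content of Lemma~\ref{lem:elementarynumbertheory} and requires Dedekind sums and the Rademacher reciprocity law, and the comparison degenerates (equal leading coefficients) exactly when $b$ is even and $r=b-1$ --- that is where the parity exception in the statement comes from. Your window $x=\tfrac{r-1}{r}-\delta$ is adapted to the cusp, whose spectrum is empty on $(0,1/r)$, and I have checked your closed form $\#Sp_{r,s}\cap(x,x+1)=g+s-\lfloor s/r\rfloor-1$ as well as the leading coefficient $K\bigl(2b-r(K+1)\bigr)/r$ in several cases; for instance $b=4$, $r=3$ gives $2/3$, consistent with a direct count for a $(4,4)$ curve in $X_{11}$ carrying a $(3,76)$ cusp, where the cusp side is $125$ against $123$ at infinity. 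Two remarks. First, your closing worry is misplaced: in the identity $\sum_{p=1}^{M}\lfloor pb/w\rfloor=\sum_{k=1}^{N}(M+1-\lceil kw/b\rceil)$ with $N=\lfloor Mb/w\rfloor$, the inner sum has at most $b-1$ terms, each equal to $M+1-kw/b$ up to an error in $[0,1)$, and the ambiguity of $N$ at the boundary value $k=b(r-1)/r$ contributes only $O(1)$; hence the total error is bounded by a constant depending on $a,b,r$ alone and can never overwhelm a positive coefficient of $e$. The parity hypothesis is an artefact of the choice $x=\tfrac12$, not of error terms. Second, and for the same reason, your argument proves more than the stated theorem: since $K\bigl(2b-r(K+1)\bigr)/r>0$ for every $2\le r<b$ and vanishes only at $r=b$, the case $r=b-1$ with $b$ even is also obstructed for $e$ large, which the paper's method cannot see (and which is consistent with Example~\ref{ex:brseries}, whose constructions all have $r=b$). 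What remains to be written out is the general algebraic verification of the leading coefficient, say with $b=Kr+\rho$, which you assert but do not derive; once that is done the proof is complete and, if anything, sharper than the original.
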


Before we give the proof of Theorem~\ref{thm:finiteness} we show a constructions of two families of curves in Hirzebruch surfaces with $b=r$. This shows
that Theorem~\ref{thm:finiteness} is close to optimal.

\begin{ex}\label{ex:brseries}
It is possible to construct two series of curves from Theorem~\ref{thm:finiteness} using Cremona transformations of the plane curve $C_d$ 
given by the defining polynomial $x^{d-1}z-y^d$. Note that the curves in the below series exist for all $d \geq3$, and $e,k \geq0$, except $(e,k)=(0,0)$.
\begin{enumerate}
\item Curves of type $(kd,d)$ in $X_e$ with one cusp and Puiseux pair $\bigl(d,(e+2k)d-1\bigr)$.
\item Curves of type $(k(d-1)+1,d-1)$ in $X_e$ with one cusp and Puiseux pair $\bigl(d-1,(e+2k)(d-1)+1\bigr)$.
\end{enumerate}
\end{ex}
\begin{proof}[Sketch of construction]
The constructions of the two series of curves in Example~\ref{ex:brseries} are very similar, hence we sketch only the construction of the first curves and give the initial blowing up for the latter curves. For similar constructions with details, see \cite{MOEPHD}. 

Given the plane unicuspidal curve $C_d$ as above, we blow up a point on the tangent line to the curve at the cusp. This gives in $X_1$ a rational unicuspidal curve of type $(0,d)$ with a cusp with multiplicity sequence $[d-1]$. The fibre through the cusp intersects the curve only at the cusp, with intersection multiplicity $d$. Now, performing subsequent elementary transformations with center on the special section and the fibre through the cusp, gives the series in $X_e$ for $e \geq 1$ and $k=0$. 

Performing a similar elementary transformation of the curve in $X_1$, this time with center outside the special section, gives in $X_0$ a curve of type $(d,d)$ with one cusp with multiplicity sequence $[d,d-1]$. Since the fibre through the cusp does not intersect the curve outside the cusp, it is again possible to construct the series for $e \geq 0$ and $k=1$. 

It can be shown by induction that a similar construction works for all $e,k \geq 0$, except $(e,k)=(0,0)$; see \cite{MOECCH}. Ultimately, we end up with curves of the given type and Puiseux pairs; see \cite{Brieskorn}.    
 
Note that the second series of curves is constructed from the same plane curves, in this case by blowing up the smooth intersection point of the curve and a generic line through the cusp.
 \end{proof}

\begin{proof}[Proof of Theorem~\ref{thm:finiteness}]
We shall use Theorem~\ref{thm:spectrum} for $x=\frac12$ and study the asymptotic of both sides of \eqref{eq:semic} as $e$ goes to infinity. 
Unfortunately, it turns out that the number of elements of
the spectrum of the singular point of type $(r,s)$ that are contained in $(\frac12,\frac32)$ and the number of elements of the spectrum at infinity that are contained in the same interval, both grow like $\frac34g$, where $g=\frac12(r-1)(s-1)=(a-1)(b-1)+\frac12b(b-1)e$ is the arithmetic genus of the curve. Therefore a
more careful analysis of both terms of inequality \eqref{eq:semic} has to be conducted.

In what follows we will assume that $e$ is large compared to $a,b$. Assume that $C$ is a rational cuspidal curve in $X_e$ of type $(a,b)$ with a single singular point and that this
point has one Puiseux pair $(r,s)$ with $r<s$. We use the notation
\begin{align*}
S_{r,s}&=\#\Sp_{r,s}\cap\left(\frac12,\frac32\right)\\
S_{inf}&=\#\Sp^\infty_{a,b}\cap\left(\frac12,\frac32\right),
\end{align*}
where $\Sp_{r,s}$ is the spectrum of the singular point of type $(r,s)$. In our computation we shall focus on terms linear in $e$, neglecting elements
which are of lower order with respect to $e$. We shall write $x\simeq y$ (where $x$ and $y$ are some expressions depending on $e$) if $\frac{x-y}{e}$ tends
to zero when $e$ goes to infinity.

For example, we have $2g=2(a-1)(b-1)+b(b-1)e\simeq w(b-1)$. Furthermore, by Proposition~\ref{prop:multbound} $r\le b$, hence $(s-1)(r-1)\simeq (r-1)s$.
As $e$ goes to infinity, if $r$ is bounded, $s$ has to go to infinity as well.
\begin{remark}
It might happen that $\frac12$ belongs ot $\Sp^\infty_{a,b}$, so we cannot use Theorem~\ref{thm:spectrum} directly. In that case, we use Theorem~\ref{thm:spectrum}
for a value of $x$ sufficiently close to $\frac12$ (and sufficiently close is a notion depending on $e$: if it is large, $|x-\frac12|$ must be smaller). 
The difference between the number of those elements in the spectrum (we consider $\Sp^\infty_{a,b}$ and $\Sp_{r,s}$)
that are in $(x,1+x)$ and those that are in $(\frac12,\frac32)$ is equal to the multiplicity of $\frac12$ in the spectrum,
so it is $\simeq 0$ as $e$ goes to infinity. As we are interested in the assymptotics only, we will work with the interval $(\frac12,\frac32)$.
\end{remark}

Let us first deal with $S_{r,s}$. Notice, that $S_{r,s}$ is twice the number of elements in $\Sp_{r,s}$ in the interval $(\frac12,1)$. That is
\begin{align*}
\frac12S_{r,s}=&\sum_{i=1}^{r-1}\sum_{j=1}^{s-1}
\begin{cases} 1&\textrm{if }\frac{i}{r}+\frac{j}{s}\in(1/2,1)\\ 0 &\textrm{otherwise}\end{cases}\\
=&\sum_{i=1}^{r-1}\#\{j\colon j\in(\frac{s}{2}-\frac{is}{r},s-\frac{is}{r})\cap\{1,2,\ldots,s-1\}\}\\
=&\sum_{i=1}^{\lfloor r/2\rfloor}\#\{j\in(\frac{s}{2}-\frac{is}{r},s-\frac{is}{r})\cap\{1,2,\ldots,s-1\}\}+\\
+&\sum_{i=\lfloor r/2\rfloor+1}^{r-1}\#\{j\in(0,s-\frac{is}{r})\cap\{1,2,\ldots,s-1\}\}\stackrel{(1)}{\simeq}\\
\simeq&\sum_{i=1}^{\lfloor r/2\rfloor} \frac{s}{2}+\sum_{i=\lfloor r/2\rfloor +1}^{r-1}\left(s-\frac{is}{r}\right)=\\
=&\frac{s}{2}\intfrac{r}{2}+\left(r-1-\intfrac{r}{2}\right)s-\frac{s}{2r}\left(r(r-1)-\intfrac{r}{2}\left(\intfrac{r}{2}+1\right)\right)=\\
=&s\left(\frac{r}{2}-\frac12\intfrac{r}{2}-\frac12+\frac{1}{2r}\intfrac{r}{2}^2+\frac{1}{2r}\intfrac{r}{2}\right).
\end{align*}
The asymptotic equality (1) holds because the difference between the number of integer elements in an interval is equal to its length up to adding or subtracting $1$.
On replacing the number of integers by the length of the interval, the error we make is at most $\pm 1$ at most $r-1$ times, but $r\le b$ is small when compared to $w$.

If $r$ is odd we get $S_{r,s}\simeq 2s\left(\frac{3}{8}r-\frac{1}{4}-\frac{1}{8r}\right)$. If $r$ is even, we get $S_{r,s}=2s\left(\frac{3}{8}r-\frac14\right)$.

\smallskip
We now deal with the spectrum at infinity. Our first observation is that
\[S_{inf}\simeq 2\sum_{p=w/2}^{w-1}\intfrac{pb}{w}.\]
Indeed, in Table~\ref{table:one} we have listed the elements in the spectrum. But the contribution from all the items but the fourth and fifth in the table is bounded
by a function depending only on $a,b$, not on $w$.
We write
\begin{equation}\label{eq:firstsum}
\sum_{p=w/2}^{w-1}\intfrac{pb}{w}\cong\sum_{p=w/2}^w\left(\frac{pb}{w}-\frac12\right)-\sum_{p=w/2}^w\sawtooth{\frac{pb}{w}},
\end{equation}
where $\langle\cdot\rangle$ is the sawtooth function, and ``$\cong$'' means that we neglect the contribution of at most $b$ instances of $p$, where $\frac{pb}{w}$
is an integer.

We have $\sum_{p=w/2}^{w-1} p=\frac{3}{8}w^2+\textrm{lower order terms in $w$}$. 
Therefore the first sum in \eqref{eq:firstsum} gives a value asymptotically equal to $w(\frac38b-\frac14)$. As for the second sum,
in \eqref{eq:firstsum} we shall use the following lemma.
\begin{lemma}\label{lem:elementarynumbertheory}
For fixed $b>1$ we have
\[\lim_{w\to\infty}\frac{1}{w}\sum_{p=w/2}^{w-1}\sawtooth{\frac{pb}{w}}=
\begin{cases}
0& \textrm{$b$ is even,}\\
\frac{1}{8b} & \textrm{$b$ is odd.}
\end{cases}\]
\end{lemma}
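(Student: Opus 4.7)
The plan is to interpret the left hand side as a Riemann sum and evaluate the corresponding integral explicitly. First, I would replace the sawtooth function by $\{\cdot\}-\tfrac12$: the two differ only at those $p$ for which $w\mid pb$, and there are at most $\gcd(b,w)\le b$ such values in any interval of length $w$, each contributing at most $\tfrac12$ in absolute value. This $O(1)$ error disappears after dividing by $w$, so
\[\lim_{w\to\infty}\frac{1}{w}\sum_{p=\lceil w/2\rceil}^{w-1}\sawtooth{\frac{pb}{w}}=\lim_{w\to\infty}\frac{1}{w}\sum_{p=\lceil w/2\rceil}^{w-1}\left(\left\{\frac{pb}{w}\right\}-\frac12\right).\]
The right hand side is a right-endpoint Riemann sum for $f(x)=\{bx\}-\tfrac12$ on $[\tfrac12,1]$; since $f$ is piecewise linear with only $b-1$ jumps of uniformly bounded size on this interval it is Riemann integrable, so the limit equals $\int_{1/2}^{1}(\{bx\}-\tfrac12)\,dx$.

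Next, I would evaluate the integral by the substitution $y=bx$, which gives
\[\int_{1/2}^{1}\{bx\}\,dx=\frac{1}{b}\int_{b/2}^{b}\{y\}\,dy,\]
and apply the elementary identity $\int_{k}^{k+1}\{y\}\,dy=\tfrac12$ for $k\in\Z$. When $b$ is even both $b/2$ and $b$ are integers, so the inner integral equals $(b/2)\cdot\tfrac12$, hence $\int_{1/2}^{1}\{bx\}\,dx=\tfrac14$, and subtracting $\int_{1/2}^{1}\tfrac12\,dx=\tfrac14$ yields $0$. When $b$ is odd the endpoint $b/2$ is a half-integer, so there is a single leftover piece $\int_{(b-1)/2}^{b/2}\{y\}\,dy=\int_{0}^{1/2}y\,dy=\tfrac18$ that must be added, leading to $\int_{1/2}^{1}\{bx\}\,dx=\tfrac14+\tfrac{1}{8b}$ and net value $\tfrac{1}{8b}$.

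There is no serious obstacle here; the result is essentially a bookkeeping exercise around a standard sawtooth-sum integral. The only minor point requiring attention is the parity of $w$: one must use $\lceil w/2\rceil$ or $\lfloor w/2\rfloor$ when $w$ is odd, but an off-by-one boundary term contributes only $O(1/w)$ to the average and is absorbed in the limit.
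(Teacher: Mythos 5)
Your proof is correct, and it takes a genuinely different and substantially more elementary route than the paper. The paper rewrites the half-range sum as a full-range sum weighted by $\intfrac{2p}{w}$, expands that floor, and thereby reduces the limit to a Dedekind sum $s(1,w')$ and a Dedekind--Rademacher sum $D(2,b,w)$; it then invokes the Dedekind and Rademacher reciprocity laws, and---because those laws require pairwise coprimality of $2$, $b$, $w$---it needs an additional continuity estimate $|c_w-c_{w+1}|<\tfrac32 b+\tfrac{33}{4}$ to transfer the limit from a coprime subsequence to the full sequence. You bypass all of this by observing that, after the $O(1)$ correction replacing $\sawtooth{\cdot}$ by $\{\cdot\}-\tfrac12$ (at most $\gcd(b,w)\le b$ terms differ, each by at most $\tfrac12$), the averaged sum is a Riemann sum for the Riemann-integrable function $\{bx\}-\tfrac12$ on $[\tfrac12,1]$, and the integral is computed directly; your values $0$ and $\tfrac{1}{8b}$ check out (e.g.\ $b=3$ gives $\tfrac13\cdot\tfrac78-\tfrac14=\tfrac1{24}$). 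What the paper's heavier machinery buys is exact closed-form identities with explicit, uniform error bounds (hence a rate of convergence), and it keeps the argument inside the Dedekind-sum toolbox already set up in Section~\ref{s:dedekind}; your argument buys brevity and transparency, at the cost of only an asymptotic statement---which is all the lemma asserts. One cosmetic point: in the odd case your ``leftover piece that must be added'' is really the statement that $\int_{b/2}^{b}\{y\}\,dy=\tfrac{b}{4}+\tfrac18$ exceeds the naive value $\tfrac{b}{2}\cdot\tfrac12$ by $\tfrac18$ (the partial cell $[\tfrac b2,\tfrac{b+1}2]$ contributes $\tfrac38$ rather than the average $\tfrac14$); the arithmetic you report is right, but phrasing it as adding $\int_{(b-1)/2}^{b/2}\{y\}\,dy$ to something is slightly misleading since that piece is literally the part you \emph{remove} from $\int_{(b-1)/2}^{b}\{y\}\,dy$.
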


The proof of Lemma~\ref{lem:elementarynumbertheory} is postponed until Section~\ref{proofofelementarynumbertheory}.
Now we resume the proof of Theorem~\ref{thm:finiteness}. 
We observe that 
\begin{equation}\label{eq:similargenus}
s(r-1)\simeq 2g\simeq w(b-1).
\end{equation}
Our computations insofar show that
\[S_{r,s}\simeq 
\begin{cases} \frac34g+\frac14s & \textrm{$r$ is even,}\\
\frac34g+\left(\frac14-\frac{1}{4r}\right)s &\textrm{$r$ is odd.}
\end{cases}\]
As for $S_{inf}$ we get
\[S_{inf}\simeq
\begin{cases} \frac34g+\frac14w & \textrm{$b$ is even,}\\
\frac34g+\left(\frac14-\frac{1}{4b}\right)w & \textrm{$b$ is odd.}
\end{cases}\]

Depending on the parity of $b$ and $r$ we have four cases. 
\begin{itemize}
\item $b$ and $r$ are even. Then $S_{r,s}$ is asymptotically bigger than $S_{new}$ unless $s\simeq w$. But $s(r-1)\simeq w(b-1)$, so this exceptional
case can occur only if $b=r$. Theorem~\ref{thm:spectrum} obstructs asymptotically all cases with $r<b$.
\item $b$ and $r$ odd. We compare $w\frac{b-1}{b}$ with $s\frac{r-1}{r}$. Since $s(r-1)\simeq w(b-1)$, we see that for large $e$ the quantity $S_{inf}$ is smaller
than $S_{r,s}$ unless $b=r$.
\item $b$ odd and $r$ even. The case is completely obstructed (for $e$ large) by Theorem~\ref{thm:spectrum}.
\item $b$ even and $r$ odd. We compare $s\frac{r-1}{r}$ with $w\frac{b-1}{b-1}$. If $r<b-1$ the
we have asymptotically $S_{r,s}>S_{inf}$, so Theorem~\ref{thm:spectrum} applies. We cannot obstruct the case $r=b-1$.
\end{itemize}
\end{proof}

\subsection{Proof of Lemma~\ref{lem:elementarynumbertheory}}\label{proofofelementarynumbertheory}
In this section we use the notions of Dedekind sums. For the reader's convenience we provide a short summary in Section~\ref{s:dedekind}.

Our goal is to compute the limit of the sequence $\frac{1}{w}a_w$, where 
\[a_w:=\sum_{p=w/2}^{w-1}\sawtooth{\frac{pb}{w}}.\]

We write
\begin{equation}\label{eq:split}
\begin{split}
a_w=&\sum_{p=w/2}^{w-1}\sawtooth{\frac{pb}{w}}=\sum_{p=w/2}^{w-1}\sawtooth{\frac{pb}{w}}\intfrac{2p}{w}=\\
&=\sum_{p=0}^{w-1}\sawtooth{\frac{pb}{w}}\intfrac{2p}{w}=
\sum_{p=0}^{w-1}\sawtooth{\frac{pb}{w}}\left(\frac{2p}{w}-\sawtooth{\frac{2p}{w}}+\frac12\right)=\\
&=\sum_{p=0}^{w-1}\sawtooth{\frac{pb}{w}}\frac{2p}{w}-\sum_{p=0}^{w-1}\sawtooth{\frac{pb}{w}}\sawtooth{
\frac{2p}{w}}+\frac12\sum_{p=0}^{w-1}\sawtooth{\frac{pb}{w}}.
\end{split}
\end{equation}
We write
\begin{align*}
b_w&=\sum_{p=0}^{w-1}\sawtooth{\frac{pb}{w}}\frac{2p}{w}\\
c_w&=\sum_{p=0}^{w-1}\sawtooth{\frac{pb}{w}}\sawtooth{\frac{2p}{w}}\\
d_w&=\frac12\sum_{p=0}^{w-1}\sawtooth{\frac{pb}{w}}.
\end{align*}
so that $a_w=b_w-c_w+d_w$. The computation of the limit $\frac1wa_w$ splits into three lemmas of increasing difficulty.

\begin{lemma}\label{lem:dm}
We have $d_w=0$.
\end{lemma}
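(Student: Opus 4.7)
The plan is to exploit the odd symmetry of the sawtooth function. Recall that for $x \notin \mathbb{Z}$ one has $\{-x\} = 1 - \{x\}$, hence $\sawtooth{-x} = -\sawtooth{x}$, and of course $\sawtooth{x} = 0$ whenever $x \in \frac{1}{2}\mathbb{Z}$. In particular $\sawtooth{\cdot}$ is an odd function on $\mathbb{R}/\mathbb{Z}$.

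First I would rewrite the sum, treating $p = 0$ separately (it contributes $\sawtooth{0} = 0$), and then pair each remaining index $p \in \{1, \ldots, w-1\}$ with $w - p$. Since
\[
\frac{(w-p)b}{w} \equiv -\frac{pb}{w} \pmod{\mathbb{Z}},
\]
the odd symmetry yields
\[
\sawtooth{\tfrac{(w-p)b}{w}} = -\sawtooth{\tfrac{pb}{w}},
\]
so the two contributions cancel. The only subtlety is the possible fixed point of the involution $p \mapsto w - p$, which occurs when $w$ is even and $p = w/2$. In that case $\tfrac{pb}{w} = \tfrac{b}{2} \in \tfrac{1}{2}\mathbb{Z}$, and by the vanishing of $\sawtooth{\cdot}$ on $\tfrac{1}{2}\mathbb{Z}$ the self-paired term contributes $0$ as well.

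Putting the pieces together, $\sum_{p=0}^{w-1} \sawtooth{\tfrac{pb}{w}} = 0$, whence $d_w = 0$. There is no real obstacle here; the only point to be careful about is the treatment of the fixed point of the involution, which is handled by the observation that $\sawtooth{\cdot}$ vanishes on half-integers.
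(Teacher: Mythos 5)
Your proof is correct. The key facts you use -- that $\sawtooth{\cdot}$ is odd and $1$--periodic, and that it vanishes on $\tfrac12\Z$ (both at integers, by definition, and at half-integers, since $\{k+\tfrac12\}-\tfrac12=0$) -- are all valid for the sawtooth function as defined in \eqref{eq:sawtoothfunction}, and the involution $p\mapsto w-p$ together with the careful handling of $p=0$ and of the fixed point $p=w/2$ does yield $\sum_{p=0}^{w-1}\sawtooth{\tfrac{pb}{w}}=0$. Your route is genuinely different from the paper's: the paper first reduces to the case $\gcd(b,w)=1$ by factoring out $c=\gcd(b,w)$, then uses the invertibility of $b$ modulo $w$ to change variables and reduce to the sum $\sum_{p}\sawtooth{\tfrac{p}{w}}$, which it declares zero ``by elementary calculations'' (in effect, the same symmetry you invoke). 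Your argument applies the odd symmetry directly to the original sum and thereby avoids both the case split on $\gcd(b,w)$ and the change of variables; it is shorter and more self-contained. The paper's reduction-to-coprime template has the minor advantage of being the same preprocessing step reused in the proof of Lemma~\ref{lem:bm}, where the substitution $p\mapsto pb''$ is genuinely needed, but for this particular lemma your direct approach is arguably the cleaner one.
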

\begin{proof}[Proof of Lemma~\ref{lem:dm}]
Suppose $b$ and $w$ are coprime. Then $b$ is invertible modulo $w$, so after changing variables the sum becomes 
$\sum_{p=0}^{w-1}\sawtooth{\frac{p}{w}}$, which is zero by elementary calculations. If $b$ and $w$ are not coprime,
we write $c=\gcd(b,w)$, $w'=w/c$, $b'=b/c$, and the sum is equal to $c\sum_{p=0}^{w'-1}\sawtooth{\frac{pb'}{w'}}$, so
we reduce to the previous case.
\end{proof}

\begin{lemma}\label{lem:bm}
We have $\lim\dfrac1wb_w=\dfrac{1}{6b}$.
\end{lemma}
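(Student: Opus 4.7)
The plan is to identify $b_w$ with essentially twice a classical Dedekind sum and then apply Dedekind reciprocity to extract the asymptotic.

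First, using the identity $\frac{p}{w} = \sawtooth{p/w} + \frac12$ valid for $1 \le p \le w-1$, I would rewrite
\[
\frac{b_w}{2} = \sum_{p=1}^{w-1} \frac{p}{w}\, \sawtooth{\frac{pb}{w}} = \sum_{p=1}^{w-1} \sawtooth{\frac{p}{w}}\sawtooth{\frac{pb}{w}} + \frac12 \sum_{p=1}^{w-1} \sawtooth{\frac{pb}{w}}.
\]
The second sum on the right vanishes by Lemma~\ref{lem:dm}, so $b_w = 2 \sum_{p=1}^{w-1}\sawtooth{p/w}\sawtooth{pb/w}$.

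If $\gcd(b,w)=1$, this is exactly $2s(b,w)$, where $s(b,w)$ is the classical Dedekind sum. In general, let $c=\gcd(b,w)$ and $b'=b/c$, $w'=w/c$, and observe that $\sawtooth{pb/w}=\sawtooth{pb'/w'}$ depends only on $p \bmod w'$. Grouping the $p$-summation by residues modulo $w'$, and invoking Lemma~\ref{lem:dm} once more to kill the off-diagonal piece that involves $\sum_r \sawtooth{rb'/w'}$, reduces our expression to $2s(b',w')$ with $\gcd(b',w')=1$. I expect this bookkeeping to be the one step that requires care.

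Applying the Dedekind reciprocity formula, recalled in Section~\ref{s:dedekind},
\[
s(b',w') + s(w',b') = -\frac14 + \frac{1}{12}\left(\frac{b'}{w'} + \frac{w'}{b'} + \frac{1}{b'w'}\right),
\]
finishes the argument. Indeed, because $b$ is fixed, $b'$ ranges over divisors of $b$ and so takes only finitely many values; hence $s(w',b')$, a finite sum depending only on $b'$ and $w' \bmod b'$, is bounded independently of $w$, and the $\frac{b'}{w'}$ and $\frac{1}{b'w'}$ terms on the right are negligible as $w\to\infty$. Dividing by $w=cw'$ and passing to the limit yields $s(b',w')/w \to \frac{1}{12\,cb'} = \frac{1}{12b}$, and therefore $b_w/w \to \frac{1}{6b}$, as claimed.
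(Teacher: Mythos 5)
Your argument is correct, and it reaches the same destination as the paper's proof --- both reduce $b_w$ to a classical Dedekind sum and then invoke the material of Section~\ref{s:dedekind} --- but the two reductions are genuinely different, and the difference is worth recording. You use $\frac{p}{w}=\sawtooth{\frac{p}{w}}+\frac12$ together with Lemma~\ref{lem:dm} to obtain the \emph{exact} identity $b_w=2\sum_p\sawtooth{\frac{p}{w}}\sawtooth{\frac{pb}{w}}=2s(b',w')$; your ``bookkeeping'' step does go through, since for fixed $q\not\equiv 0 \bmod w'$ the inner sum $\sum_{k=0}^{c-1}\sawtooth{\frac{q+kw'}{cw'}}$ collapses to $\sawtooth{\frac{q}{w'}}$. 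You then genuinely need the reciprocity law of Theorem~\ref{thm:dedekind} to get $s(b',w')=\frac{w'}{12b'}+O(1)$, the error being under control because $s(w',b')$ has at most $b'\le b$ terms, each of absolute value at most $\frac14$. The paper instead substitutes $p\mapsto pb''$ (with $b''$ the inverse of $b'$ modulo $w'$) to trade $\sum_p\sawtooth{\frac{pb'}{w'}}\frac{pb'}{w'}$ for $\sum_p\sawtooth{\frac{pb'}{w'}}^2=s(1,w')$, which has the closed form $\frac{1}{12}\bigl(w'+\frac{2}{w'}-3\bigr)$, so reciprocity enters only in a degenerate way. What your route buys is exactness: the paper's substitution changes the sum by the bounded quantity $\sum_p\sawtooth{\frac{pb'}{w'}}\bigl(\lfloor\frac{pb'}{w'}\rfloor+\frac12\bigr)=O(b)$ (for $b'=2$, $w'=5$ the two sides are $0$ and $\frac15$), which is harmless for the limit but means the displayed identity there is only asymptotic, whereas $b_w=2s(b',w')$ holds on the nose. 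Either way one gets $\frac1w b_w\to\frac{2}{12\,cb'}=\frac{1}{6b}$.
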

\begin{proof}[Proof of Lemma~\ref{lem:bm}]
Let again $c=\gcd(b,w)$ and $w'=w/c$, $b'=b/c$. We have
\[b_w=\frac{2c}{b}\sum_{p=0}^{w'-1}\sawtooth{\frac{pb'}{w'}}\frac{pb'}{w'}.\]
The same argument as in Lemma~\ref{lem:dm} allows us to rewrite the sum in the following way.
\[\sum_{p=0}^{w'-1}\sawtooth{\frac{pb'}{w'}}\frac{pb'}{w'}=\sum_{p=0}^{w'-1}\sawtooth{\frac{pb'}{w'}}^2.\]
By assumption, $b'$ and $w'$ are coprime. Substituting for $p$ the multiple $pb''$, where $b''$ is the inverse of $b'$ modulo $w'$
we obtain 
\[\sum_{p=0}^{w'-1}\sawtooth{\frac{pb'}{w'}}^2=\sum_{p=0}^{w'-1}\sawtooth{\frac{p}{w'}}^2=s(1,w'),\]
By Theorem~\ref{thm:dedekind} (and the elementary observation that $s(w',1)=0$) the expression 
evaluates to $\frac{1}{12}\left(w'+\frac{2}{w'}-3\right)$.
The lemma follows immediately.
\end{proof}

\begin{lemma}\label{lem:cm}
If $b$ is odd, we have $\lim\dfrac1wc_w=\dfrac{1}{24b}$, while if $b$ is even, then $\lim\dfrac1wc_w=\dfrac{1}{6b}$.
\end{lemma}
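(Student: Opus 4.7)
The plan is to recognize $c_w$ as a generalized Dedekind sum and then compute its asymptotics via the reciprocity law and the standard limit $s(h,k)/k\to 1/(12h)$ (for $h$ fixed and $\gcd(h,k)=1$, $k\to\infty$) already used in the proof of Lemma~\ref{lem:bm}. The two formulas in the statement reflect the two parities of $b$; within each, one must further analyze the two parities of $w$, because the factor $2$ inside $\sawtooth{2p/w}$ behaves very differently depending on whether $w$ is even or odd.

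First I would treat $w$ odd. Then $2$ is invertible modulo $w$, and substituting $p\mapsto 2^{-1}p\bmod w$ transforms
\[c_w=\sum_{p=0}^{w-1}\sawtooth{\tfrac{hp}{w}}\sawtooth{\tfrac{p}{w}},\qquad h\equiv 2^{-1}b\pmod{w}.\]
After the same gcd--reduction as in Lemma~\ref{lem:bm}---splitting $p=jw'+r$ with $\gcd(b',w')=1$ and collapsing the inner $j$--sum via $\sum_{j=0}^{c-1}\sawtooth{x+j/c}=\sawtooth{cx}$---we may assume $\gcd(b,w)=1$, in which case the sum above is the classical Dedekind sum $s(h,w)$. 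If $b$ is even then $h=b/2$ is fixed for large $w$, and the standard asymptotic gives directly $c_w/w\to 1/(12(b/2))=1/(6b)$. If $b$ is odd then $h=(b+w)/2$ grows linearly with $w$; here I apply the reciprocity
\[s(h,w)+s(w,h)=-\tfrac14+\tfrac{1}{12}\left(\tfrac{h}{w}+\tfrac{w}{h}+\tfrac{1}{hw}\right),\]
and observe that $w\equiv -b\pmod{h}$ forces $s(w,h)=-s(b,h)$. Since $b$ is fixed and $\gcd(b,h)=\gcd(b,w)=1$, the asymptotic $s(b,h)/h\to 1/(12b)$ with $h\sim w/2$ gives $s(w,h)\sim -w/(24b)$, and collecting the bounded terms $\tfrac{1}{12}(h/w+w/h+1/(hw))$ yields $c_w/w\to 1/(24b)$.

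Next I would treat $w=2m$ even. Then $\sawtooth{2p/w}=\sawtooth{p/m}$, so writing $p=qm+r$ for $q\in\{0,1\}$, $r\in\{0,\dots,m-1\}$ gives
\[c_w=\sum_{r=0}^{m-1}\sawtooth{\tfrac{r}{m}}\left(\sawtooth{\tfrac{rb}{2m}}+\sawtooth{\tfrac{b}{2}+\tfrac{rb}{2m}}\right).\]
If $b$ is even, $b/2\in\Z$, and the parenthesized sum is $2\sawtooth{r(b/2)/m}$, reducing $c_w$ to $2s(b/2,m)$ and hence $c_w/w\to 1/(6b)$. If $b$ is odd, the elementary identity $\sawtooth{x}+\sawtooth{x+\tfrac12}=\sawtooth{2x}$ (which is immediate from the definition \eqref{eq:sawtoothfunction}) collapses the parenthesized sum to $\sawtooth{rb/m}$, reducing $c_w$ to $s(b,m)$ and hence $c_w/w\to 1/(24b)$. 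In each subcase the gcd--reduction from Lemma~\ref{lem:bm} shows that a non-coprime pair $(b,m)$ does not alter the limit.

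The main obstacle is the $b$ odd, $w$ odd case, where reciprocity is unavoidable because the first argument $h=(b+w)/2$ of the Dedekind sum grows with $w$; in every other subcase the Dedekind sum that appears has a fixed first argument and the standard asymptotic applies directly. Beyond this the argument is bookkeeping: verifying the sawtooth identity $\sawtooth{x}+\sawtooth{x+\tfrac12}=\sawtooth{2x}$, checking that the parity of $c=\gcd(b,w)$ in the reduction step is forced by the parities of $b$ and $w$ in such a way that all four cases are covered, and keeping track of the factor $1/w$ versus $1/w'$ or $1/m$ when passing to the coprime reduction.
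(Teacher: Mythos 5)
Your argument is correct, and it reaches both limits by a genuinely different route from the paper's proof. The paper keeps $c_w$ as the three--variable Rademacher--Dedekind sum $D(2,b,w)$, computes its limit only along subsequences of $w$ with convenient coprimality ($w$ coprime to $2b$ when $b$ is odd, via the three--term Rademacher reciprocity; $w$ even with $\gcd(b,w)=2$ when $b$ is even, via the two--term law), and then needs a separate stability estimate $|c_w-c_{w+1}|\le\frac32 b+\frac{33}{4}$ to transfer the subsequence limit to the full sequence. You instead eliminate the ``$2$'' by hand in every parity class of $w$ --- the substitution $p\mapsto 2^{-1}p$ for $w$ odd, and the splitting $p=qm+r$ together with the duplication identity $\sawtooth{x}+\sawtooth{x+\frac12}=\sawtooth{2x}$ for $w=2m$ --- so that only classical two--argument Dedekind sums and the two--term reciprocity ever appear, and every $w$ is covered directly, which makes the stability claim unnecessary. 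The one genuinely delicate point in your plan is the ($b$ odd, $w$ odd) case, where the first argument $h=(b+w)/2$ of $s(h,w)$ grows with $w$; your device of using $w\equiv -b\pmod h$ to convert $s(w,h)$ into $-s(b,h)$, whose asymptotics are controlled because $b$ is fixed, is correct (and $\gcd(b,h)$ divides $\gcd(b,b+w)=\gcd(b,w)=1$ after the reduction, as needed). The trade--off between the two proofs: the paper's route is uniform in the treatment of the sum at the price of the fiddly $|c_w-c_{w+1}|$ estimate and the three--term law; yours is entirely elementary and avoids both, at the price of a four--way parity analysis and careful tracking of the normalization $1/w$ versus $1/w'$ or $1/m$ through the gcd reductions --- which you correctly flag as the remaining bookkeeping.
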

\begin{proof}
The term $c_w$ is the Rademacher--Dedekind symbol $D(2,b,w)$. We would like to use the Rademacher reciprocity law, but in order to do this,
we need to pass to a subsequence of $w$, because we need that $2,b,w$ are pairwise coprime. 
The following claim allows us to compute $\lim\frac1wc_w$ by passing to a subsequence 
given by some arithmetic progression.

\emph{Claim.} $|c_w-c_{w+1}|<\frac32b+\frac{33}{4}.$

To prove the claim notice that $\delta_{p,b,w}:=\sawtooth{\frac{pb}{w}}-\sawtooth{\frac{pb}{w+1}}$
is equal to $\frac{pb}{w(w+1)}<\frac{b}{w}$, unless any one of the three conditions holds:
\begin{itemize}
\item $\intfrac{pb}{w}>\intfrac{pb}{w+1}$;
\item $\intfrac{pb}{w}$ is an integer;
\item $\intfrac{pb}{w+1}$ is an integer;
\end{itemize}
In each of these cases we have $\delta_{p,b,w}<1$. The last two cases occur at most $b$ times each. If the first case occurs, then
there exists $k\in\Z$ such that $\frac{pb}{w}\ge k>\frac{pb}{w+1}$. This
implies that $p\in[\frac{wk}{b},\frac{(w+1)k}{b})$. Notice that $p<w$, hence $k<b$ and the length of the interval is at most $1$. Therefore
it cannot contain more than one integer. Hence the first case occurs at most $b$ times. Combining these cases (they are
not mutually exclusive, but we can be slightly wasteful) we obtain
\begin{equation}\label{eq:deltapm}
|\delta_{p,b,w}|\le\begin{cases} 1 & \textrm{ for at most $3b$ values of $p\in\{0,1,\ldots,w-1\}$}\\
\frac{1}{w} & \textrm{ for all other values of $p$.}\end{cases}
\end{equation}

We now write
\begin{multline*}
\left|\sawtooth{\frac{pb}{w}}\sawtooth{\frac{2p}{w}}-\sawtooth{\frac{pb}{w+1}}\sawtooth{\frac{2p}{w+1}}\right|\le\\
\left|\sawtooth{\frac{pb}{w}}-\sawtooth{\frac{pb}{w+1}}\right| \left|\sawtooth{\frac{2b}{w}}\right|+
\left|\sawtooth{\frac{2b}{w+1}}-\sawtooth{\frac{2b}{w}}\right| \left|\sawtooth{\frac{pb}{w+1}}\right|\le\\
\le \frac12|\delta_{p,b,w}|+\frac12|\delta_{p,2,w}|,
\end{multline*}
where we used the fact that $|\sawtooth{x}|\le\frac12$ for all $x$. We sum up the above inequality over $p\in\{0,\ldots,w-1\}$.
Combining this with \eqref{eq:deltapm} and adding a $\frac14$ for the term $p=w$, which appear in the formula for $c_{w+1}$ (and do
not appear in the formula for $c_w$), we obtain

\[
|c_w-c_{w+1}|
\le\frac14+\frac12\sum_{p=0}^{w-1}\left|\delta_{p,b,w}\right|+\left|\delta_{p,2,w}\right|
\le \frac14+\frac12(3b+1)+\frac12(3\cdot 2+1)=\frac32b+\frac{33}{4}.
\]
This proves the claim. We now resume the proof of Theorem~\ref{thm:finiteness}. We split it into two cases.

\smallskip
\underline{Case 1. $b$ is odd.}
Suppose $w$ is coprime to $b$ and $2$. By the Rademacher reciprocity law (Theorem~\ref{thm:rademacher}):
\[c_w=\frac{b^2+w^2+4-6bw}{24bw}-\sum_{p=0}^2\sawtooth{\frac{pw}{2}}\sawtooth{\frac{bw}{2}}-\sum_{p=0}^{b-1}\sawtooth{\frac{2p}{b}}\sawtooth{\frac{pw}{b}}.\]
The two sums on the left are sums of bounded (as $w$ goes to infinity) number of summands with each summand bounded by $\frac14$. It follows that
\[\lim_{\substack{w\to\infty\\ \textrm{$w$ coprime with $2$ and $b$}}} \frac1wc_w=\lim_{w\to\infty}\frac{1}{w}\frac{b^2+w^2+4-8bw}{24bw}=\frac{1}{24b}.\]
By the claim, the limit of the subsequence of $\frac1wc_w$ over $w$ coprime with $2$ and $b$ is the same as the limit of the sequence 
$\frac1wc_w$.

\smallskip
\underline{Case 2. $b$ is even.}
Suppose $w$ is even and $\gcd(b,w)=2$. Write $w'=w/2$, $b'=b/2$. Then we have
\[c_w=2\sum_{p=0}^{w'-1}\sawtooth{\frac{pb'}{w'}}\sawtooth{\frac{p}{w'}}=2s(b',w').\]
By the Dedekind reciprocity law (Theorem~\ref{thm:dedekind}):
\[s(b',w')=\frac{1}{12}\left(\frac{w'}{b'}+\frac{w'}{b'}+\frac{1}{b'w'}-3\right)-s(w',b').\]
The expression $s(w',b')$ is a sum of $b'$ terms, each bounded by $\frac14$, hence $\lim_{w'\to\infty}\frac{1}{w'}s(w',b')=0$. This means that
\[\lim_{\substack{w\to\infty\\\gcd(b,w)=2}}\frac{1}{w}c_w=\frac{1}{6b}.\]
By the claim, the limit of a subsequence of $\frac1wc_w$ is the same as the limit of the full sequence.
\end{proof}

\subsection{Dedekind sums and reciprocity laws}\label{s:dedekind}

For the reader's convenience we include some elementary facts about Dedekind sums. We refer to \cite{RG} or \cite{HZ} for more detail.

Let $p,q$ be positive integers. The \emph{Dedekind sum} is the following expression.
\[ s(p,q)=\sum_{i=0}^{q-1}\sawtooth{\frac{i}{q}}\sawtooth{\frac{pi}{q}}.\]

Dedekind sums appear in number theory in various places; see for instance \cite{RG}; in combinatorics, when they are used to compute
the number of lattice points in polytopes; and in low-dimensional topology where they appear in connection with lens spaces, Seifert fibred manifolds
and signatures of torus knots: see \cite{HZ} for more detail.

The most important result we use is the Dedekind reciprocity law.
\begin{theorem}\label{thm:dedekind}
If $p$ and $q$ are coprime, then
\[s(p,q)+s(q,p)=\frac{1}{12}\left(\frac{p}{q}+\frac{q}{p}+\frac{1}{pq}-3\right).\]
\end{theorem}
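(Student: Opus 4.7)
The plan is to follow the classical Rademacher contour-integration proof (see, e.g., \cite{RG}). The starting point is to express the Dedekind sum in cotangent form: for $\gcd(p,q)=1$,
\[ s(p,q) = \frac{1}{4q}\sum_{k=1}^{q-1}\cot\!\left(\frac{\pi k p}{q}\right)\cot\!\left(\frac{\pi k}{q}\right). \]
This identity is established independently by substituting the partial-fraction expansion of the sawtooth function $\sawtooth{\cdot}$ into the definition of $s(p,q)$ and evaluating the resulting finite sine sums.

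Next, choose an appropriate meromorphic function $F(z)$ built out of $\cot(\pi p z)$ and $\cot(\pi q z)$ (possibly with an extra factor of $\cot(\pi z)$, divided by a suitable power of $z$), so that $F$ has simple poles at the nonzero multiples of $1/p$ and $1/q$ and a higher-order pole at $z=0$ whose residue produces all three rational terms $p/q$, $q/p$, $1/(pq)$. Integrate $F$ around a square contour $\Gamma_N$ of side $2N$ centred at the origin, with $N$ chosen to keep $\Gamma_N$ a positive distance from every pole; standard estimates (uniform boundedness of $\cot$ away from the real axis combined with the $1/z$ decay) show $\oint_{\Gamma_N}F(z)\,dz \to 0$ as $N\to\infty$.

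By the residue theorem the sum of all residues of $F$ must therefore vanish. Pairing the residues at $z=\pm k/q$ for $1\le k\le q-1$ and invoking the cotangent representation yields a term proportional to $s(p,q)$; by symmetry, the residues at $z=\pm k/p$ contribute a term proportional to $s(q,p)$. Finally, the residue at $z=0$ is extracted from the Taylor expansion $\cot(\pi t)=\frac{1}{\pi t}-\frac{\pi t}{3}-\frac{\pi^3 t^3}{45}-\cdots$ by reading off the appropriate Laurent coefficient of $F$; this produces the rational expression $\frac{1}{12}\bigl(\frac{p}{q}+\frac{q}{p}+\frac{1}{pq}-3\bigr)$.

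The main obstacle is the bookkeeping at the pole $z=0$: selecting a function $F$ whose residue there captures all four constants on the right-hand side, and then tracking signs, powers of $\pi$, and the coefficient $\frac{1}{4q}$ arising from the cotangent representation, is the delicate part; each individual step is mechanical but assembling them correctly is error-prone. A conceptually cleaner alternative is Mordell's proof, which evaluates the integral $\int_0^1 \sawtooth{px}\,\sawtooth{qx}\,dx$ in two different ways—once using the periodicity of $\sawtooth{\cdot}$ and once via a linear change of variables—thereby avoiding complex analysis entirely.
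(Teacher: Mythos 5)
First, note that the paper does not actually prove Theorem~\ref{thm:dedekind}: it is quoted as a classical fact with a pointer to \cite{RG} and \cite{HZ}, so there is no in-paper argument to compare yours against. Your outline is the standard Rademacher contour-integration proof, and the overall architecture (cotangent representation of $s(p,q)$, residues at the poles $k/p$ and $k/q$ recovering the two Dedekind sums, a higher-order pole at the origin supplying rational terms) is the right one.

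However, as written the sketch has a genuine gap at its centre: the function $F$ is never specified, and the two properties you demand of it are in tension. If you want the residues at $z=\pm k/q$ to reproduce the cotangent form $\frac{1}{4q}\sum_{k=1}^{q-1}\cot\bigl(\frac{\pi kp}{q}\bigr)\cot\bigl(\frac{\pi k}{q}\bigr)$, the natural choice is $F(z)=\cot(\pi z)\cot(\pi pz)\cot(\pi qz)$ (the residue of $\cot(\pi qz)$ at $k/q$ being $\frac{1}{\pi q}$). But then the residue at $z=0$, read off from $\cot(\pi t)=\frac{1}{\pi t}-\frac{\pi t}{3}-\cdots$, is $-\frac{1+p^2+q^2}{3\pi pq}$, which accounts for $\frac{p}{q}+\frac{q}{p}+\frac{1}{pq}$ but \emph{not} for the constant $-3$; and the contour integral does \emph{not} tend to zero, since $F(z)\to\mp i$ as $\operatorname{Im}z\to\pm\infty$, so the boundary contribution survives in the limit and is precisely what produces the missing $-3$ (equivalently $-\tfrac14$ after normalisation). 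Inserting a factor $1/z$ to force decay destroys the matching with the cotangent sum, because every residue at $k/q$ then acquires an extra $q/k$. So the assertions ``$\oint_{\Gamma_N}F\to 0$'' and ``the residue at $z=0$ yields all of $\frac{1}{12}\bigl(\frac{p}{q}+\frac{q}{p}+\frac{1}{pq}-3\bigr)$'' cannot both hold for any admissible $F$; one of them must be replaced by a careful evaluation of the limiting contour integral. This is exactly the bookkeeping you flag as ``delicate,'' but it is not mere bookkeeping --- it is where a term of the identity lives. (Your closing remark also slightly misattributes Mordell: the identity $\int_0^1\sawtooth{px}\sawtooth{qx}\,dx=\frac{1}{12pq}$ for coprime $p,q$ is a Parseval computation and does not by itself yield reciprocity; Mordell's argument is a three-dimensional lattice-point count.)
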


There are many ways to generalise the Dedekind sum, of which we use only one, the so--called Dedekind--Rademacher sum $D(p,q,r)$ defined as
\[D(p,q,r)=\sum_{i=0}^{r-1}\sawtooth{\frac{pi}{r}}\sawtooth{\frac{qi}{r}}.\]
The Dedekind reciprocity law generalises to the Rademacher reciprocity law (or the ``three term law'') which is stated as follows.
\begin{theorem}\label{thm:rademacher}
If $p,q,r$ are pairwise coprime, then
\[D(p,q,r)+D(r,p,q)+D(q,r,p)=\frac{p^2+q^2+r^2-3pqr}{12pqr}.\]
\end{theorem}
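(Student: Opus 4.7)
The plan is to follow the classical Rademacher-style contour integration argument, adapting the standard proof of the Dedekind reciprocity (Theorem~\ref{thm:dedekind}) from a product of two cotangents to a product of three. The first step is to rewrite the Dedekind--Rademacher symbol as a cotangent sum. Starting from the Fourier identity
\[\sawtooth{a/r}=-\frac{1}{2r}\sum_{k=1}^{r-1}\cot(\pi k/r)\sin(2\pi ak/r)\qquad(r\nmid a),\]
inserting it twice into the definition of $D(p,q,r)$, and simplifying via the orthogonality relation $\sum_{n=0}^{r-1}\sin(2\pi nk/r)\sin(2\pi nj/r)=\tfrac{r}{2}(\delta_{k\equiv j}-\delta_{k\equiv -j})$, one obtains (using that $p$ and $q$ are invertible modulo $r$) the compact representation
\[D(p,q,r)=\frac{1}{4r}\sum_{k=1}^{r-1}\cot\!\Bigl(\frac{\pi pk}{r}\Bigr)\cot\!\Bigl(\frac{\pi qk}{r}\Bigr).\]

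With this in hand I would study the meromorphic function
\[f(z)=\cot(\pi pz)\cot(\pi qz)\cot(\pi rz),\]
which is $1$-periodic and whose poles inside $(0,1]$ are the simple poles $z=k/p,\ k/q,\ k/r$ (distinct, by the pairwise coprimality of $p,q,r$) together with a single triple pole at $z=1$. I would integrate $f$ counterclockwise around the rectangle with vertices $\epsilon\pm iN$ and $1+\epsilon\pm iN$ for small $\epsilon>0$ and large $N$. By periodicity the two vertical sides cancel; and since $\cot(\pi nz)\to\mp i$ uniformly as $\mathrm{Im}\,z\to\pm\infty$, the top and bottom sides together contribute $-2i$ in the limit $N\to\infty$.

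Residue computation then finishes the argument. Near $z=k/p$ the factor $\cot(\pi pz)$ contributes $\tfrac{1}{\pi p(z-k/p)}$, so by the cotangent formula above the sum of residues at the $p$-poles equals $\tfrac{4}{\pi}D(q,r,p)$, and symmetrically the $q$- and $r$-poles contribute $\tfrac{4}{\pi}D(p,r,q)$ and $\tfrac{4}{\pi}D(p,q,r)$. Expanding $\cot(\pi nz)=\tfrac{1}{\pi nz}-\tfrac{\pi nz}{3}+O(z^{3})$ and multiplying out yields the triple-pole residue $-\tfrac{p^{2}+q^{2}+r^{2}}{3\pi pqr}$. Substituting everything into $\oint f\,dz=2\pi i\sum\mathrm{Res}$ and dividing by $8\pi i$ produces the claim, after invoking the symmetry $D(p,r,q)=D(r,p,q)$ in the first two arguments to match the cyclic presentation in the statement.

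The main obstacle will be the preliminary cotangent identity for $D(p,q,r)$: the orthogonality step collapses correctly only because both $p$ and $q$ are coprime to $r$, and the boundary terms coming from the $k=0$ sines together with the $i=0$ summand in the definition of $D$ must be tracked carefully to avoid off-by-constant errors. A secondary technical point is justifying the uniform convergence $\cot(\pi nz)\to\mp i$ on the horizontal segments (independently of $\epsilon$), which is routine but is needed so that the contour contribution is exactly $-2i$ rather than merely asymptotic in $N$.
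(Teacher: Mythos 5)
The paper does not prove Theorem~\ref{thm:rademacher} at all: it is quoted as a classical fact, with the reader referred to \cite{RG} and \cite{HZ}, so there is no in-paper argument to compare against. Your proposal reconstructs the standard contour-integration proof (essentially the one in Rademacher--Grosswald), and it is correct in all essentials: the cotangent representation $D(p,q,r)=\frac{1}{4r}\sum_{k=1}^{r-1}\cot(\pi pk/r)\cot(\pi qk/r)$ does follow from the finite Fourier expansion of the sawtooth plus orthogonality, using only that $p$ and $q$ are invertible mod $r$ (and the $i=0$ term of $D$ vanishes since $\sawtooth{0}=0$, so there is no boundary correction to worry about); the poles of $\cot(\pi pz)\cot(\pi qz)\cot(\pi rz)$ in the strip are simple at $k/p$, $k/q$, $k/r$ by pairwise coprimality, with a single triple pole at $z=1$; the simple-pole residues sum to $\frac{4}{\pi}\bigl(D(q,r,p)+D(p,r,q)+D(p,q,r)\bigr)$, the triple pole contributes $-\frac{p^2+q^2+r^2}{3\pi pqr}$, and the horizontal edges contribute $-2i$, which after $2\pi i\sum\mathrm{Res}=-2i$ yields exactly $\frac{p^2+q^2+r^2-3pqr}{12pqr}$. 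Two trivial bookkeeping remarks: the final normalization is division by $8i$, not $8\pi i$ (since $2\pi i\cdot\frac{4}{\pi}=8i$), and the Fourier identity for the sawtooth in fact holds also when $r\mid a$ (both sides vanish), so the caveat $r\nmid a$ is unnecessary. Neither affects the argument, which is a complete and correct route to the stated reciprocity.
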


\bibliographystyle{amsplain}
\def\MR#1{}
\bibliography{bib.bib}

\providecommand{\bysame}{\leavevmode\hbox to3em{\hrulefill}\thinspace}
\providecommand{\MR}{\relax\ifhmode\unskip\space\fi MR }
\providecommand{\MRhref}[2]{%
  \href{http://www.ams.org/mathscinet-getitem?mr=#1}{#2}
}
\providecommand{\href}[2]{#2}
\begin{thebibliography}{10}

\bibitem{BCG14}
J.~Bodn\'ar, D.~Celoria, and M.~Golla, \emph{Cuspidal curves and {H}eegaard
  {F}loer homology}, arXiv:1409.3282 (2014).

\bibitem{BHL14}
M.~Borodzik, M.~Hedden, and C.~Livingston, \emph{Plane algebraic curves of
  arbitrary genus via heegaard floer homology}, arXiv:1409.2111 (2014).

\bibitem{BorodzikHF}
M.~{Borodzik} and C.~{Livingston}, \emph{{Heegaard Floer homology and rational
  cuspidal curves}}, arXiv:1304.1062 (2013).

\bibitem{BN-spec}
M.~Borodzik and A.~N{\'e}methi, \emph{Spectrum of plane curves via knot
  theory}, J. Lond. Math. Soc. (2) \textbf{86} (2012), no.~1, 87--110.
  \MR{2959296}

\bibitem{BN-hodge}
\bysame, \emph{Hodge-type structures as link invariants}, Ann. Inst. Fourier
  (Grenoble) \textbf{63} (2013), no.~1, 269--301. \MR{3097948}

\bibitem{Borodzik}
M.~Borodzik and H.~Zoladek, \emph{Number of singular points of an annulus in
  {$\Bbb C^2$}}, Ann. Inst. Fourier (Grenoble) \textbf{61} (2011), no.~4,
  1539--1555. \MR{2951503}

\bibitem{Brieskorn}
E.~Brieskorn and H.~Kn{\"o}rrer, \emph{Plane algebraic curves}, Birkh\"auser
  Verlag, Basel, 1986. \MR{MR886476 (88a:14001)}

\bibitem{COX2}
D.~A. Cox, J.~B. Little, and H.~K. Schenck, \emph{Toric varieties}, Graduate
  Studies in Mathematics, vol. 124, American Mathematical Society, Providence,
  RI, 2011. \MR{2810322 (2012g:14094)}

\bibitem{EN}
D.~Eisenbud and W.~Neumann, \emph{Three-dimensional link theory and invariants
  of plane curve singularities}, Annals of Mathematics Studies, vol. 110,
  Princeton University Press, Princeton, NJ, 1985. \MR{817982 (87g:57007)}

\bibitem{Fent}
T.~Fenske, \emph{Unendliche serien ebener rationaler kuspidaler {K}urven vom
  typ (d,d-k)}, Ph.D. thesis, Institut f{\"{u}}r Mathematik,
  Ruhr-Universit{\"{a}}t Bochum, Bochum, Germany, 1999.

\bibitem{FLMN04}
J.~Fern{\'a}ndez~de Bobadilla, I.~Luengo-Velasco, A.~Melle-Hern{\'a}ndez, and
  A.~N{\'e}methi, \emph{On rational cuspidal projective plane curves}, Proc.
  London Math. Soc. (3) \textbf{92} (2006), no.~1, 99--138. \MR{2192386
  (2007a:14031)}

\bibitem{FlZa95}
H.~Flenner and M.~Zaidenberg, \emph{On a class of rational cuspidal plane
  curves}, Manuscripta Math. \textbf{89} (1996), no.~4, 439--459. \MR{MR1383524
  (97e:14041)}

\bibitem{FlZa97}
\bysame, \emph{Rational cuspidal plane curves of type {$(d,d-3)$}}, Math.
  Nachr. \textbf{210} (2000), 93--110. \MR{MR1738940 (2001h:14034)}

\bibitem{GS99}
R.~Gompf and A.~Stipsicz, \emph{{$4$}-manifolds and {K}irby calculus}, Graduate
  Studies in Mathematics, vol.~20, American Mathematical Society, Providence,
  RI, 1999. \MR{1707327 (2000h:57038)}

\bibitem{Sakai}
R.~V. {Gurjar}, S.~{Kaliman}, N.~{Mohan Kumar}, M.~{Miyanishi}, P.~{Russell},
  F.~{Sakai}, D.~{Wright}, and M.~{Zaidenberg}, \emph{{Open problems on open
  algebraic varieties}}, {arXiv:alg-geom/9506006} (1995).

\bibitem{Hart:1977}
R.~Hartshorne, \emph{Algebraic geometry}, Springer-Verlag, New York, 1977,
  Graduate Texts in Mathematics, No. 52. \MR{MR0463157 (57 \#3116)}

\bibitem{HZ}
F.~Hirzebruch and D.~Zagier, \emph{The {A}tiyah-{S}inger theorem and elementary
  number theory}, Publish or Perish, Inc., Boston, Mass., 1974, Mathematics
  Lecture Series, No. 3. \MR{0650832 (58 \#31291)}

\bibitem{LinZaidenberg}
V.~Lin and M.~Zaidenberg, \emph{An irreducible, simply connected algebraic
  curve in {${\bf C}^{2}$} is equivalent to a quasihomogeneous curve}, Dokl.
  Akad. Nauk SSSR \textbf{271} (1983), no.~5, 1048--1052. \MR{722017
  (85i:14018)}

\bibitem{Liu}
T.~Liu, \emph{On planar rational cuspidal curves}, Ph.D. thesis, Massachusetts
  Institute of Technology, 2014.

\bibitem{MatsuokaSakai}
T.~Matsuoka and F.~Sakai, \emph{The degree of rational cuspidal curves}, Math.
  Ann. \textbf{285} (1989), no.~2, 233--247. \MR{MR1016092 (91a:14014)}

\bibitem{MOEPHD}
T.~K. Moe, \emph{Cuspidal curves on {H}irzebruch surfaces}, Akademika
  publishing, 2013, Thesis (Ph.D.) -- University of Oslo.

\bibitem{MOEONC}
\bysame, \emph{On the number of cusps on cuspidal curves on {H}irzebruch
  surfaces}, Math. Nachr. (2014), DOI: 10.1002/mana.201300089.

\bibitem{MOECCH}
\bysame, \emph{Rational cuspidal curves with four cusps on {H}irzebruch
  surfaces}, Le Matematiche \textbf{69} (2014), no.~2.

\bibitem{Neu-other}
W.~Neumann, \emph{Invariants of plane curve singularities}, Knots, braids and
  singularities ({P}lans-sur-{B}ex, 1982), Monogr. Enseign. Math., vol.~31,
  Enseignement Math., Geneva, 1983, pp.~223--232. \MR{728588 (85c:14019)}

\bibitem{Neu-splice}
\bysame, \emph{Splicing algebraic links}, Complex analytic singularities, Adv.
  Stud. Pure Math., vol.~8, North-Holland, Amsterdam, 1987, pp.~349--361.
  \MR{894301 (88e:57009)}

\bibitem{Neu-inf}
\bysame, \emph{Complex algebraic plane curves via their links at infinity},
  Inv. Math. \textbf{98} (1989), no.~3, 445--489.

\bibitem{OS-absolutely}
P.~Ozsv{\'a}th and Z.~Szab{\'o}, \emph{Absolutely graded {F}loer homologies and
  intersection forms for four-manifolds with boundary}, Adv. Math. \textbf{173}
  (2003), no.~2, 179--261. \MR{1957829 (2003m:57066)}

\bibitem{OS-knot}
\bysame, \emph{Holomorphic disks and knot invariants}, Adv. Math. \textbf{186}
  (2004), no.~1, 58--116. \MR{2065507 (2005e:57044)}

\bibitem{PALKA}
K.~{Palka}, \emph{The {C}oolidge-{N}agata conjecture, part {I}}, Adv. Math.
  \textbf{267} (2014), 1--43.

\bibitem{Piontkowski}
J.~Piontkowski, \emph{On the number of cusps of rational cuspidal plane
  curves}, Experiment. Math. \textbf{16} (2007), no.~2, 251--255.
  \MR{MR2339280}

\bibitem{RG}
H.~Rademacher and E.~Grosswald, \emph{Dedekind sums}, The Mathematical
  Association of America, Washington, D.C., 1972, The Carus Mathematical
  Monographs, No. 16. \MR{0357299 (50 \#9767)}

\bibitem{Tono05}
K.~Tono, \emph{On the number of the cusps of cuspidal plane curves}, Math.
  Nachr. \textbf{278} (2005), no.~1-2, 216--221. \MR{MR2111810 (2006b:14048)}

\bibitem{Wak}
I.~Wakabayashi, \emph{On the logarithmic {K}odaira dimension of the complement
  of a curve in {$\mathbb{P}^{2}$}}, Proc. Japan Acad. Ser. A Math. Sci.
  \textbf{54} (1978), no.~6, 157--162. \MR{MR0498590 (58 \#16683)}

\bibitem{Wall-book}
C.~T.~C. Wall, \emph{Singular points of plane curves}, London Mathematical
  Society Student Texts, vol.~63, Cambridge University Press, Cambridge, 2004.
  \MR{2107253 (2005i:14031)}

\end{thebibliography}

\end{document}